\renewcommand\theequation{\thesection.\arabic{equation}}
\newcommand{\BA}{{\mathbb {A}}}
\newcommand{\BC}{{\mathbb {C}}}
\newcommand{\BR}{{\mathbb {R}}}
\newcommand{\BZ}{{\mathbb {Z}}}
\newcommand{\CA}{{\mathcal {A}}}
\newcommand{\CH}{{\mathcal {H}}}
\newcommand{\CM}{{\mathcal {M}}}
\newcommand{\CO}{{\mathcal {O}}}
\newcommand{\bs}{\backslash}
\newcommand{\cusp}{{\mathrm{cusp}}}
\newcommand{\disc}{{\mathrm{disc}}}
\newcommand{\GL}{{\mathrm{GL}}}
\newcommand{\Ind}{{\mathrm{Ind}}}
\newcommand{\Jord}{{\mathrm{Jord}}}
\newcommand{\SO}{{\mathrm{SO}}}
\newcommand{\Sym}{{\mathrm{Sym}}}
\newcommand{\Sp}{{\mathrm{Sp}}}
\newcommand{\wt}{\widetilde}
\newcommand{\ol}{\overline}
\newcommand{\ul}{\underline}
\def\bks{{\backslash}}
\def\diag{{\rm diag}}
\newtheorem{thm}{Theorem}[section]
\newtheorem{cor}[thm]{Corollary}
\newtheorem{lem}[thm]{Lemma}
\newtheorem{prop}[thm]{Proposition}
\newtheorem {conj}[thm]{Conjecture}
\newtheorem {ques/conj}[thm]{Question/Conjecture}
\newtheorem{defn}[thm]{Definition}
\newtheorem{rmk}[thm]{Remark}
\newcommand{\Rmnum}[1]{\expandafter\@slowromancap\romannumeral #1@}
\begin{document}
\renewcommand{\theequation}{\arabic{equation}}
\numberwithin{equation}{section}

\title[Arthur Parameters and Fourier Coefficients]{Arthur Parameters and Fourier coefficients for Automorphic Forms on Symplectic Groups}

\author{Dihua Jiang}
\address{School of Mathematics\\
University of Minnesota\\
Minneapolis, MN 55455, USA}
\email{dhjiang@math.umn.edu}

\author{Baiying Liu}
\address{Department of Mathematics\\
University of Utah\\
Salt Lake City, UT 84112, USA}
\email{liu@math.utah.edu}

\subjclass[2000]{Primary 11F70, 22E50; Secondary 11F85, 22E55}

\date{\today}


\keywords{Arthur Parameters, Fourier Coefficients, Unipotent Orbits, Automorphic Forms}

\thanks{The research of the first named author is supported in part by the NSF Grants DMS--1301567, and the research of the second
named author is supported in part by NSF Grants DMS-1302122, and in part by a postdoc research fund from Department of Mathematics, University of Utah}

\begin{abstract}
We study the structures of Fourier coefficients of automorphic forms on symplectic groups
based on their local and global structures related to Arthur parameters. This is a first step towards the general conjecture
on the relation between the structure of Fourier coefficients and Arthur parameters for automorphic forms occurring in the discrete spectrum,
given by the first named author in \cite{J14}.
\end{abstract}

\maketitle


\section{Introduction}

In the classical theory of automorphic forms, Fourier coefficients encode abundant arithmetic information of automorphic forms on one
hand. On the other hand, Fourier coefficients bridges the connection from harmonic analysis to number theory via automorphic forms.
In the modern theory of automorphic forms, i.e. the theory of automorphic representations of reductive algebraic groups defined over
a number field $F$ (or a global field), Fourier coefficients continue to play the indispensable role in the last half century.

In the theory of automorphic forms on $\GL_n$, the Whittaker-Fourier coefficients played a fundamental role due to the fact that
every cuspidal automorphic representation of $\GL_n(\BA)$, where $\BA$ is the ring of adeles of $F$, has a non-zero Whittaker-Fourier
coefficient, a classical theorem of Piatetski-Shapiro and Shalika (\cite{PS79} and \cite{S74}).
This result has been extended to the discrete spectrum of $\GL_n(\BA)$ in \cite{JL13}. In general,
due to the nature of the discrete spectrum of square-integrable automorphic forms on reductive algebraic groups $G$, one has
to consider more general version of Fourier coefficients, i.e. Fourier coefficients of automorphic forms attached to unipotent orbits on $G$.
Such general Fourier coefficients of automorphic forms, including Bessel-Fourier coefficients and Fourier-Jacobi coefficients have been
widely used in theory of automorphic $L$-functions via integral representation method (see \cite{GJRS11} and \cite{JZ14}, for instance),
in automorphic descent method of Ginzburg, Rallis and Soudry to produce special cases of explicit Langlands functorial transfers (\cite{GRS11}),
and in the Gan-Gross-Prasad conjecture on vanishing of the central value of certain automorphic $L$-functions of symplectic type
(\cite{GJR04} and \cite{GGP12}). More recent applications of such general Fourier coefficients to explicit constructions of endoscopy
transfers for classical groups can be found in \cite{J14} (and also in \cite{G12} for split classical groups).


We recall from \cite{JL15a} the definition of Fourier coefficients of automorphic forms attached to unipotent orbits.
Take $G_n=\Sp_{2n}$ to be the symplectic group with a Borel subgroup $B=TU$, where the
maximal torus $T$ consists of all diagonal matrices of form:
$$
\diag(t_1,\cdots,t_n;t_n^{-1},\cdots,t_1^{-1})
$$
and the unipotent radical of $B$ consists of all upper unipotent matrices in $\Sp_{2n}$. This choice fixes a root datum of $\Sp_{2n}$.

Let $\ol{F}$ be the algebraic closure of the number field $F$. The set of all unipotent adjoint orbits of $G_n(\ol{F})$ is parameterized by
the set of partitions of $2n$ whose odd parts occur with even multiplicity (see \cite{CM93}, \cite{N11} and \cite{W01}, for instance).
We may call them symplectic partitions of $2n$. When we consider $G_n$ over $F$, the symplectic partitions of $2n$ parameterize
the $F$-stable unipotent orbits of $G_n(F)$.

As in \cite[Section 2]{JL15a}, for each symplectic partition $\ul{p}$ of $2n$, or equivalently each $F$-stable unipotent orbit $\CO_{\ul{p}}$,
via the standard $\mathfrak{sl}_2(F)$-triple, one may construct an $F$-unipotent subgroup $V_{\ul{p},2}$. In this case, the $F$-rational unipotent
orbits in the $F$-stable unipotent orbit $\CO_{\ul{p}}$ are parameterized by a datum $\ul{a}$ (see \cite[Section 2]{JL15a} for detail).
This datum defines a character $\psi_{\ul{p},\ul{a}}$ of $V_{\ul{p},2}(\BA)$, which is trivial on $V_{\ul{p},2}(F)$.

For an arbitrary automorphic form $\varphi$ on $G_n(\BA)$, the $\psi_{\underline{p}, \underline{a}}$-Fourier coefficient
of $\varphi$ is defined by
\begin{equation}\label{fc}
\varphi^{\psi_{\underline{p}, \underline{a}}}(g):=\int_{V_{\underline{p}, 2}(F)\bks V_{\underline{p}, 2}(\BA)}
\varphi(vg) \psi^{-1}_{\underline{p}, \underline{a}}(v) dv.
\end{equation}
When an irreducible automorphic representation $\pi$ of $G_n(\BA)$ is generated by automorphic forms $\varphi$,
we say that $\pi$ has a nonzero $\psi_{\underline{p}, \underline{a}}$-Fourier coefficient or
a nonzero Fourier coefficient attached to a (symplectic) partition $\ul{p}$ if there exists an
automorphic form $\varphi$ in the space of $\pi$ with a nonzero $\psi_{\underline{p}, \underline{a}}$-Fourier coefficient
$\varphi^{\psi_{\underline{p}, \underline{a}}}(g)$, for some choice of $\ul{a}$.

For any irreducible automorphic representation $\pi$
of $G_n(\BA)$, as in \cite{J14}, we define $\frak{p}^m(\pi)$ (which corresponds to $\mathfrak{n}^m(\pi)$ in the notation of \cite{J14})
to be the set of all symplectic partitions $\underline{p}$ which have the properties
that $\pi$ has a nonzero $\psi_{\underline{p}, \underline{a}}$-Fourier coefficient
for some choice of $\underline{a}$, and for any ${\underline{p}'} > \underline{p}$ (with
the natural ordering of partitions), $\pi$ has no nonzero Fourier coefficients
attached to ${\underline{p}'}$.

It is an interesting problem to determine the structure of the set $\frak{p}^m(\pi)$ for any given irreducible automorphic
representation $\pi$ of $G_n(\BA)$. When $\pi$ occurs in the discrete spectrum of square integrable automorphic functions on
$G_n(\BA)$, the global Arthur parameter attached to $\pi$ (\cite{Ar13}) is clearly a fundamental invariant for $\pi$.
We are going to recall a conjecture made in \cite{J14} which relates the structure of the global
Arthur parameter of $\pi$ to the structure of the set $\frak{p}^m(\pi)$. To do so, we briefly recall the endoscopic classification
of the discrete spectrum for $G_n(\BA)$ from \cite{Ar13}.

The set of global Arthur parameters for the discrete spectrum of $G_n=\Sp_{2n}$ is denoted, as in \cite{Ar13},
by $\wt{\Psi}_2(\Sp_{2n})$, the elements of which are of the form
\begin{equation}\label{psin}
\psi:=\psi_1\boxplus\psi_2\boxplus\cdots\boxplus\psi_r,
\end{equation}
where $\psi_i$ are pairwise different simple global Arthur parameters of orthogonal type and have the form
$\psi_i=(\tau_i,b_i)$. Here $\tau_i\in\CA_\cusp(\GL_{a_i})$,
(the set of equivalence classes of irreducible cuspidal automorphic representations of $\GL_{a_i}(\BA)$),
 $2n+1 = \sum_{i=1}^r a_ib_i$ (since the dual group of $\Sp_{2n}$ is $\SO_{2n+1}(\BC)$),
and $\prod_i \omega_{\tau_i}^{b_i} = 1$ (the condition
on the central characters of the parameter $\psi$), following \cite[Section 1.4]{Ar13}. More precisely, for each $1 \leq i \leq r$, $\psi_i=(\tau_i,b_i)$
satisfies the following conditions:
if $\tau_i$ is of symplectic type (i.e., $L(s, \tau_i, \wedge^2)$ has a pole at $s=1$), then $b_i$ is even; if $\tau_i$ is of orthogonal type (i.e., $L(s, \tau_i, \Sym^2)$ has a pole at $s=1$), then $b_i$ is odd. Given a global Arthur parameter $\psi$ as above, recall from \cite{J14} that $\ul{p}(\psi)=[(b_1)^{a_1} \cdots (b_r)^{a_r}]$ is the partition attached to $(\psi, G^{\vee}(\BC))$.

\begin{thm}[Theorem 1.5.2, \cite{Ar13}] For each global Arthur parameter $\psi\in\wt{\Psi}_2(\Sp_{2n})$ a global Arthur
packet $\wt{\Pi}_\psi$ is defined. The discrete spectrum of $\Sp_{2n}(\BA)$ has the following decomposition
$$
L^2_\disc(\Sp_{2n}(F)\bks\Sp_{2n}(\BA))
\cong\oplus_{\psi\in\wt{\Psi}_2(\Sp_{2n})}\oplus_{\pi\in\wt{\Pi}_\psi(\epsilon_\psi)}\pi,
$$
where $\wt{\Pi}_\psi(\epsilon_\psi)$ denotes the subset of $\wt{\Pi}_\psi$ consisting of members which occur in the
discrete spectrum.
\end{thm}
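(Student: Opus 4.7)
The plan is to follow Arthur's strategy of comparing the discrete part of the stable trace formula for $\Sp_{2n}$ with the discrete part of the stable twisted trace formula for $\GL_{2n+1}$ relative to the outer automorphism $\theta\colon g\mapsto J\,{}^t g^{-1} J^{-1}$. Since the dual group of $\Sp_{2n}$ is $\SO_{2n+1}(\BC)$, which is the twisted endoscopic group for $(\GL_{2n+1},\theta)$, the conjectural Langlands functoriality from $\Sp_{2n}$ to $\GL_{2n+1}$ predicts that every $\pi$ in the discrete spectrum of $\Sp_{2n}(\BA)$ corresponds to an isobaric sum $\tau_1\boxplus\cdots\boxplus\tau_r$ on $\GL_{2n+1}(\BA)$ of mutually distinct discrete automorphic representations that are $\theta$-self-dual of orthogonal type; the global Arthur parameters in $\wt{\Psi}_2(\Sp_{2n})$ are precisely such formal sums with the constraints on $(\tau_i,b_i)$ recalled in \eqref{psin}.

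The key steps, in the order one carries them out, are as follows. First, recall Moeglin--Waldspurger's classification of the discrete spectrum of $\GL_{N}$ so that each $\psi=\boxplus_i(\tau_i,b_i)$ produces a canonical $\theta$-stable automorphic representation $\pi_\psi$ on $\GL_{N}(\BA)$ with $N=2n+1$. Second, for each place $v$ define the local Arthur packet $\wt{\Pi}_{\psi_v}$ and a map $\pi_v\mapsto \langle\,\cdot\,,\pi_v\rangle$ from the packet to $\widehat{\CS_{\psi_v}}$ by requiring that the stable character of $\wt{\Pi}_{\psi_v}$, twisted by the local characters, matches the $\theta$-twisted character of the local component of $\pi_{\psi_v}$ on $\GL_N(F_v)$; the existence of such matching is secured by the Langlands--Shelstad/Kottwitz--Shelstad transfer together with the fundamental lemma (Ngo) and its weighted and twisted variants (Chaudouard--Laumon, Waldspurger). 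Third, globally assemble the packet $\wt{\Pi}_\psi$ as the restricted tensor product of the $\wt{\Pi}_{\psi_v}$ over places, and define the global pairing $\epsilon_\psi\in\widehat{\CS_\psi}$ from the signs determined by the symplectic-root-number formula $\epsilon_\psi(s)=\prod_i \epsilon(1/2,\tau_i)^{\cdots}$ prescribed in \cite{Ar13}. Finally, apply the stabilization of the twisted trace formula (due to Moeglin--Waldspurger in the form required here) to compare both sides, isolate the contribution of $\psi$, and read off the multiplicity formula $\pi\in\wt{\Pi}_\psi(\epsilon_\psi)$ from the resulting spectral identity.

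The entire argument runs by a simultaneous induction on $N$ and on the number of constituents $r$; at each stage one uses the induction hypothesis to establish both the existence of local packets and the multiplicity formula, since neither can be proved in isolation. The main obstacle, both conceptually and technically, is the stabilization of the twisted trace formula together with the matching of $\theta$-twisted characters on $\GL_N$ with stable characters on $\Sp_{2n}$; this is where the full weight of the fundamental lemma and the analytic subtleties of the noninvariant trace formula enter, and where the inductive definition of the packets has to be shown compatible with the spectral expansion. Secondary obstacles are controlling the elliptic endoscopic contributions arising from the parameters $\psi$ whose centralizer $\CS_\psi$ is nontrivial, and in particular verifying that the Arthur sign character $\epsilon_\psi$ precisely selects the members of $\wt{\Pi}_\psi$ appearing in the discrete spectrum.
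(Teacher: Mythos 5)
This statement is quoted verbatim from Arthur's book and is not proved in the paper; it serves only as background for the conjecture and results that follow. There is therefore no ``paper's proof'' to compare against. That said, your sketch is a fair outline of the strategy Arthur actually carries out: the comparison of the stable trace formula for $\Sp_{2n}$ with the $\theta$-twisted trace formula for $\GL_{2n+1}$, the use of the Moeglin--Waldspurger description of the discrete spectrum of $\GL_N$ to parametrize the $\psi$, the inductive construction of local packets via character identities tied to the twisted transfer, and the simultaneous induction over $N$ and the number of constituents to obtain both local packets and the global multiplicity formula with the sign character $\epsilon_\psi$. You correctly identify the stabilization of the twisted trace formula and the fundamental lemma as the major external inputs; one should add that in Arthur's treatment the stabilization of the full (weighted, twisted) trace formula is itself a long-standing hypothesis at the time, later completed by Moeglin--Waldspurger, and that the local intertwining relation is a further key ingredient in making the inductive definition of packets coherent. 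In the context of this paper you should simply cite Theorem 1.5.2 of \cite{Ar13} rather than attempt to reproduce its proof.
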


As in \cite{J14}, one may call $\wt{\Pi}_\psi(\epsilon_\psi)$ the automorphic $L^2$-packet attached to $\psi$.
For $\pi\in\wt{\Pi}_\psi(\epsilon_\psi)$, the structure of the global Arthur parameter $\psi$ deduces constraints on
the structure of $\frak{p}^m(\pi)$, which is given by the following conjecture.

\begin{conj}[Conjecture 4.2, \cite{J14}]\label{cubmfc}
For any $\psi\in\wt{\Psi}_2(\Sp_{2n})$, let $\wt{\Pi}_{\psi}(\epsilon_\psi)$ be the automorphic $L^2$-packet attached to $\psi$.
Assume that $\underline{p}(\psi)$ is the partition attached to $(\psi,G^\vee(\BC))$. Then the following hold.
\begin{enumerate}
\item[(1)] Any symplectic partition $\ul{p}$ of $2n$, if $\ul{p}>\eta_{{\frak{g}^\vee,\frak{g}}}(\underline{p}(\psi))$, does
not belong to $\frak{p}^m(\pi)$ for any $\pi\in\wt{\Pi}_{\psi}(\epsilon_\psi)$.
\item[(2)] For a $\pi\in\wt{\Pi}_{\psi}(\epsilon_\psi)$, any partition $\ul{p}\in\frak{p}^m(\pi)$ has the property that
$\ul{p}\leq \eta_{{\frak{g}^\vee,\frak{g}}}(\ul{p}(\psi))$.
\item[(3)] There exists at least one member $\pi\in\wt{\Pi}_{\psi}(\epsilon_\psi)$ having the property that
$\eta_{{\frak{g}^\vee,\frak{g}}}(\ul{p}(\psi))\in \frak{p}^m(\pi)$.
\end{enumerate}
Here $\eta_{{\frak{g}^\vee,\frak{g}}}$ denotes the Barbasch-Vogan duality map from the partitions for the dual group $G^\vee(\BC)$ to
the partitions for $G$.
\end{conj}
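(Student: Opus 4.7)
The plan is to reduce Conjecture~\ref{cubmfc} to (a) a local upper bound on the wavefront partition of each member of the local Arthur packet $\wt{\Pi}_{\psi_v}$ in terms of the Barbasch--Vogan dual of $\underline{p}(\psi)$, and (b) a global construction producing at least one $\pi \in \wt{\Pi}_\psi(\epsilon_\psi)$ whose Fourier coefficient at the extremal partition $\eta_{\frak{g}^\vee,\frak{g}}(\underline{p}(\psi))$ is nonzero. The local input is the wavefront set of local packet constituents; the global input is the residual Eisenstein series construction driven by the cuspidal data $(\tau_i,b_i)$ of $\psi$.

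For parts (1) and (2), the starting observation is the following local-to-global principle: a nonzero $\psi_{\underline{p},\underline{a}}$-Fourier coefficient of $\varphi \in \pi$ forces every local component $\pi_v$ to carry a nonzero twisted Jacquet functional along the local unipotent pair $(V_{\underline{p},2},\psi_{\underline{p},\underline{a}})_v$, so that $\underline{p}$ is dominated by the local wavefront partition of $\pi_v$ at every place $v$. At unramified places, $\psi_v$ decomposes into unramified characters twisted by $\SL_2(\BC)$-factors of dimensions $b_i$, the associated nilpotent orbit in $\frak{g}^\vee$ is exactly $\underline{p}(\psi)$, and Barbasch--Vogan duality matches this to the maximal wavefront partition of the unramified packet. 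At archimedean and ramified non-archimedean places the analogous bound should follow from the explicit construction of $\wt{\Pi}_{\psi_v}$ due to Moeglin and Arthur. Intersecting the bounds over all places gives (2), from which (1) is immediate.

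For part (3) I would proceed constructively. Form an Eisenstein series on $\Sp_{2n}(\BA)$ induced from a parabolic subgroup whose Levi factor has the form $\GL_{a_1} \times \cdots \times \GL_{a_r} \times \Sp_{2m}$, with inducing data the Speh representations $\Delta(\tau_i,b_i)$ attached to the simple parameters of $\psi$, and take iterated residues at the appropriate points of the cuspidal support so that the resulting residual representation lands in $\wt{\Pi}_\psi(\epsilon_\psi)$. Its Fourier coefficient at $\eta_{\frak{g}^\vee,\frak{g}}(\underline{p}(\psi))$ can then be computed by unfolding the Eisenstein series, iteratively applying the root-exchange technique of Ginzburg--Rallis--Soudry, and reducing the integral to a Whittaker--Fourier coefficient on an auxiliary $\GL$-factor, where non-vanishing is provided by Piatetski-Shapiro--Shalika. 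The survival of the residue along this unfolding would need to be checked via the local non-vanishing criteria of \cite{GRS11}.

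The hardest step, in my view, is the local upper bound at ramified non-archimedean places: the precise description of the wavefront set of members of non-tempered local Arthur packets is still only partially available, and a uniform statement across all $v$ is needed to conclude (2). A secondary delicate point in (3) is ensuring that the extremal Fourier coefficient is carried by a single automorphic member of the packet rather than diluted across several; this requires fine control of the $\epsilon_\psi$-condition via Arthur's multiplicity formula applied to the leading Fourier coefficient of the residual representation. These obstructions make it natural, as a first step, to establish the conjecture on restricted families of parameters, which is the focus of the present paper.
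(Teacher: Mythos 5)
Your write-up treats Conjecture~\ref{cubmfc} as a theorem to be proved in full, but the paper proves only Part~(1) (Theorem~\ref{main}), gives a partial result toward Part~(2) (Proposition~\ref{prop3}, stated for the lexicographic rather than the dominance order), and explicitly defers Part~(3) to forthcoming work (with the special family $\psi=(\tau,2m)\boxplus(1,1)$ handled in \cite{Liu13}). So the scope of your proposal does not match what the paper establishes.

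On the substance of the argument for Parts~(1)--(2), there is a genuine gap. You reduce the problem to a \emph{local} upper bound on the wavefront partition of $\pi_v$ at each place $v$, matching $\eta_{\frak{g}^\vee,\frak{g}}(\underline{p}(\psi))$ exactly at unramified places. The paper does not prove, nor does it rely on, any such pure local wavefront bound. What it actually proves locally (Lemmas~\ref{lemv1} and \ref{lemv2}) is vanishing of twisted Jacquet modules only for partitions of two very specific shapes, $[(2r)1^{2n-2r}]$ and $[(2r+1)^2 1^{2n-4r-2}]$, for a certain restricted family of induced representations. These partial local vanishings are then amplified to the full conclusion through \emph{global} Fourier-coefficient manipulations: Fourier coefficients attached to composite partitions, the root-exchange techniques of \cite{GRS03}, \cite{GRS11}, and the structural results of \cite{JL15a} (Lemma~3.1, Propositions~3.2--3.3 there). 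Your phrase ``intersecting the bounds over all places'' also inverts the logic: to kill a global Fourier coefficient it suffices to find one place where the local module vanishes, so the issue is never intersection but rather producing a single good place and a strong enough local statement there — precisely what the paper does. Moreover, your assertion that at any unramified $v$ ``the associated nilpotent orbit in $\frak{g}^\vee$ is exactly $\underline{p}(\psi)$'' is not automatic: the Barbasch orbit attached to $\pi_v$ through the Muic--Tadic classification depends on the central characters $\omega_{\tau_{i,v}}$, and it equals $\underline{p}(\psi)$ only when all of them are trivial (Type III in the paper's terminology). Establishing the existence of infinitely many such places is nontrivial and is the purpose of Propositions~\ref{propsq} and~\ref{propap} (the square-class arguments via the Dirichlet unit theorem and the global square theorem); this entire ingredient is missing from your plan. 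You rightly flag the ramified/archimedean wavefront sets as an obstruction, but the paper sidesteps that obstruction entirely by working at a single well-chosen unramified place; your route goes through it head-on and therefore cannot close.

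Your proposal for Part~(3) (residual Eisenstein series built from Speh data, unfolding, root exchange, and reduction to a $\GL$-Whittaker coefficient) is in the spirit of what has been done in \cite{Liu13} and \cite{JLZ13} for special parameters, but it is not carried out in this paper, and the delicate issues you name (survival of the residue, control of $\epsilon_\psi$ via the multiplicity formula) are indeed open in general. So it should be presented as a program, not as a proof.
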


We refer to \cite[Section 4]{J14} for more discussion on this conjecture and related topics. We note that
the natural ordering of partitions is a partial ordering, and Part (2) of Conjecture \ref{cubmfc} is to rule out partitions which
are not related to the partition $\eta_{{\frak{g}^\vee,\frak{g}}}(\ul{p}(\psi))$. One may combine Parts (1) and (2) of Conjecture \ref{cubmfc}
into one statement. However, due to the technical reasons, it may be better to separate Part (1) from Part (2).

This paper is part of our on-going project to confirm Conjecture \ref{cubmfc} and is to prove

\begin{thm}\label{main}
Part (1) of Conjecture \ref{cubmfc} holds for any $\psi\in\wt{\Psi}_2(\Sp_{2n})$.
\end{thm}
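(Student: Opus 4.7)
The plan is to prove the strong vanishing $\varphi^{\psi_{\ul{p},\ul{a}}} \equiv 0$ for every $\varphi$ in the space of $\pi$, every datum $\ul{a}$, and every symplectic partition $\ul{p}$ of $2n$ with $\ul{p} > \eta_{\Fg^\vee,\Fg}(\ul{p}(\psi))$; this is strictly stronger than Part (1) of Conjecture \ref{cubmfc} and immediately implies it.

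The first step is to invoke Theorem 1.5.2 of \cite{Ar13} to realize each $\pi \in \wt{\Pi}_\psi(\epsilon_\psi)$ as an irreducible summand of an automorphic representation built from the cuspidal data $\{\tau_i\}_{i=1}^r$: namely, via the M\oe glin--Waldspurger residue construction, $\pi$ occurs as a constituent of a residual representation whose cuspidal support is governed by the Speh representations $\Delta(\tau_i,b_i)$ on a Levi subgroup of $\Sp_{2n}$ whose $\GL$-factors correspond to the simple summands of $\psi$. This converts the desired vanishing on $\pi$ into a vanishing statement on this concrete residual representation that is amenable to the unfolding of an Eisenstein integral.

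The core of the argument is an application of the root-exchange technique of Ginzburg--Rallis--Soudry (in the form systematically developed in \cite{GRS11}) to rewrite $\varphi^{\psi_{\ul{p},\ul{a}}}$ as an iterated integral in which an inner layer is a Fourier coefficient, along a unipotent subgroup of some $\GL_{a_ib_i}$, taken on the Speh representation $\Delta(\tau_i,b_i)$, and attached to a partition $\ul{q}_i$ of $a_ib_i$. The crucial combinatorial input, to be proved as a separate lemma, is that for any $\ul{p} > \eta_{\Fg^\vee,\Fg}(\ul{p}(\psi))$ one can arrange the root exchange so that $\ul{q}_i$ strictly dominates $[b_i^{a_i}]$ for at least one index $i$. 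Once this is in place, the $\GL$-vanishing result of \cite{JL13} (a discrete-spectrum analogue of Piatetski-Shapiro--Shalika) gives that every such Fourier coefficient on $\Delta(\tau_i,b_i)$ vanishes identically, and this vanishing propagates outward through the exchange chain to yield $\varphi^{\psi_{\ul{p},\ul{a}}} \equiv 0$.

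The main obstacle, and the bulk of the work, is precisely the combinatorial matching in the core step: verifying that the hypothesis $\ul{p} > \eta_{\Fg^\vee,\Fg}(\ul{p}(\psi))$ genuinely forces an oversized inner $\GL$-Fourier coefficient on at least one component. This requires delicate tracking of the Barbasch--Vogan duality map, including the shift between a partition of $2n+1$ for $\SO_{2n+1}(\BC)$ and its dual partition of $2n$ for $\Sp_{2n}$, the entanglement of distinct parameters $(\tau_i,b_i)$ in the collapse procedure that defines $\eta_{\Fg^\vee,\Fg}$, and the symplectic/orthogonal parity constraints on each $\tau_i$ that dictate the parity of each $b_i$. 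The argument will therefore likely proceed through a case analysis organized by the shape of $\ul{p}$ relative to $\eta_{\Fg^\vee,\Fg}(\ul{p}(\psi))$, with the root-exchange configuration constructed explicitly in each case so as to deliver the oversized inner partition.
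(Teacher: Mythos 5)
Your approach has a fundamental gap at the very first step. You propose to realize each $\pi \in \wt{\Pi}_\psi(\epsilon_\psi)$ as a constituent of a residual representation whose cuspidal support is built from the Speh representations $\Delta(\tau_i,b_i)$, invoking a M\oe glin--Waldspurger-type construction. But the M\oe glin--Waldspurger classification of the residual spectrum applies to $\GL_n$, not to $\Sp_{2n}$, and the automorphic $L^2$-packet $\wt{\Pi}_\psi(\epsilon_\psi)$ for a non-generic $\psi$ typically contains cuspidal CAP representations (Saito--Kurokawa on $\Sp_4$ being the prototype) which are \emph{not} residues of Eisenstein series in any parabolically-induced realization with the desired cuspidal support. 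Since Part (1) of Conjecture \ref{cubmfc} is a universal statement over \emph{every} member of the packet, an argument that proceeds through an Eisenstein-residue model of $\pi$ cannot cover these members. The root-exchange reduction to an inner $\GL$-Fourier coefficient on $\Delta(\tau_i,b_i)$, and the resulting appeal to \cite{JL13}, therefore only makes sense for the residual members, not for the packet as a whole. (On top of this, the combinatorial lemma you identify as ``the bulk of the work''---that $\ul{p} > \eta_{\frak{g}^\vee,\frak{g}}(\ul{p}(\psi))$ forces an oversized inner partition on at least one $\GL$-block---is left entirely unproven; it is genuinely delicate and is not a routine unwinding of Definition \ref{def2}.)

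The paper sidesteps this obstruction by working locally rather than with a global Eisenstein model. All members of $\wt{\Pi}_\psi(\epsilon_\psi)$ share the same unramified local component at almost every place, and the global Fourier coefficient attached to $(\ul{p},\ul{a})$ factors through the corresponding twisted Jacquet module of $\pi_v$. The strategy is therefore: (i) use the square-class argument (Propositions \ref{propsq} and \ref{propap}) to produce an unramified place $v$ where all $\tau_{i,v}$ have trivial central character, so that $\pi_v$ is of Type III in the Muic--Tadic classification; (ii) run the Ginzburg--Rallis--Soudry Fourier--Jacobi/root-exchange machinery purely on the \emph{local} induced model of $\pi_v$ (Lemmas \ref{lemv1}, \ref{lemv2} and Propositions \ref{prop1}, \ref{prop2}) to show the relevant twisted Jacquet modules vanish; and (iii) verify by a collapse/expansion computation that the critical partition coming out of this local analysis is exactly $\eta_{\frak{g}^\vee,\frak{g}}(\ul{p}(\psi))$ (Theorem \ref{thmub1}). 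Because this argument sees $\pi$ only through one unramified local component, it applies uniformly to every member of the packet, which is precisely what Part (1) requires. If you want to salvage your approach, you would need a separate treatment of the cuspidal CAP members, and at that point you would essentially be forced into the local analysis anyway.
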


The proof of Theorem \ref{main} takes steps which combine local and global arguments.
Some discussions on Part (2) of Conjecture \ref{cubmfc} will be given in Section 6.3.
We expect that the refinement of these arguments will
be able to prove Part (2) of Conjecture \ref{cubmfc} in general.
This will be considered in our forthcoming work.
Of course, Part (3) of Conjecture \ref{cubmfc} is global in nature and will be considered
by extending the arguments in \cite{JL15a}.

In \cite{Liu13}, based on the results in \cite{JLZ13} on construction of residual representations,
the second named author confirmed Part (3) of Conjecture \ref{cubmfc} for the following family of special non-generic
global Arthur parameters for $\Sp_{4mn}$: $\psi=(\tau, 2m) \boxplus (1_{\GL_1(\BA)}, 1)$, where $\tau$ is an irreducible cuspidal automorphic representation of
$\GL_{2n}(\BA)$, with the properties that
$L(s, \tau, \wedge^2)$ has a simple
pole at $s=1$, and
$L(\frac{1}{2}, \tau) \neq 0$.
Note that by Theorem \ref{main}, Proposition \ref{prop3} and Remark \ref{rmk8}, the first two parts of Conjecture \ref{cubmfc} hold for these
global Arthur parameters. Therefore, Conjecture \ref{cubmfc} is confirmed for this
family of global Arthur parameters. Of course, as discussed in \cite[Section 4]{J14}, when the global Arthur parameter $\psi$ is generic,
Conjecture \ref{cubmfc} can be viewed as the global version of the Shahidi conjecture, which is now a consequence of \cite{Ar13}
and \cite{GRS11}. We refer to \cite[Section 3]{JL15b} for detailed discussion of this issue and related problems. 

In order to prove Theorem \ref{main}, we first consider the unramified local component $\pi_v$ of an irreducible unitary automorphic
representation $\pi$ of $\Sp_{2n}(\BA)$ at one finite local place $v$ of $F$. The structure of unramified unitary dual of $\Sp_{2n}(F_v)$
was determined by D. Barbasch in \cite{Bar10} and by G. Muic and M. Tadic in \cite{MT11} with different approaches. We recall
from \cite{MT11} the results on unramified unitary dual and determine, for any given global Arthur parameter $\psi\in\wt{\Psi}_2(\Sp_{2n})$,
the unramified components $\pi_v$ of any $\pi\in\wt{\Pi}_{\psi}(\epsilon_\psi)$ in terms of the classification data in \cite{MT11}.
The Fourier coefficients for $\pi$ produce the corresponding twisted Jacquet modules for $\pi_v$. In Section 3, we show in Lemmas
\ref{lemv1} and \ref{lemv2} the vanishing of certain twisted Jacquet modules for the unramified unitary representations $\pi_v$,
which builds up first local constraints for the vanishing of Fourier coefficients of $\pi$. In Section 4, based on the local results
in Sections 2 and 3, we come back to the global situation and prove vanishing of certain Fourier coefficients of $\pi$. Here we use
global techniques developed through the work of \cite{GRS03}, \cite{GRS11}, and \cite{JL15a}, in particular, the results on Fourier
coefficients associated to composites of partitions. The main results in Section 4 are Theorems \ref{thmub2} and \ref{thmub3}, which
establish the vanishing of Fourier coefficients of $\pi$ whose unramified local component $\pi_v$ is strongly negative.
The general case is done in Section 5 (Theorems \ref{thmub1} and \ref{thmub4}). In the last section (Section 6), we first prove in Propositions \ref{propsq} and
\ref{propap}. They imply that
for a given global Arthur parameter $\psi$, there are infinitely many unramified, finite local places $v$ of $F$, where the unramified
local components $\tau_{i,v}$ have trivial central characters.
With such refined results on the central characters of $\tau_{i,v}$, we are able to finish the proof of Theorem \ref{main}
in Section 6.2, by combining
all the results established in the previous sections.

We would like to thank the referee for the careful reading of the paper and helpful comments and suggestions.



\section{Unramified Unitary Dual and Arthur Parameters}

In this section, we recall the classification of the unramified unitary dual of $p$-adic symplectic groups, which was obtained by
Barbasch in \cite{Bar10} and by Muic and Tadic in \cite{MT11}. In terms of the structure of the unramified unitary dual of $p$-adic
$\Sp_{2n}$, we try to understand the structure of the unramified local component $\pi_v$ of an irreducible automorphic
representation $\pi=\otimes_v\pi_v$ of $\Sp_{2n}(\BA)$ belonging to an automorphic $L^2$-packet $\wt{\Pi}_{\psi}(\varepsilon_{\psi})$
for an arbitrary global Arthur parameter $\psi \in \wt{\Psi}_2(\Sp_{2n})$.


\subsection{Unramified Unitary Dual of Symplectic Groups}

The unramified unitary dual of split classical groups was classified by
Barbasch in \cite{Bar10} (both real and $p$-adic cases), and by
Muic and Tadic in \cite{MT11} ($p$-adic case), using different methods.
We follow the approach in \cite{MT11} for $p$-adic symplectic groups.

Let $v$ be a finite local place of the given number field $F$.
The classification in \cite{MT11} starts from classifying two special families of irreducible unramified
representations of $\Sp_{2n}(F_v)$ that are called {\bf strongly negative} and {\bf negative},
respectively. We refer to \cite{M06} for definitions of strongly negative and negative
representations, respectively, and for more related discussion on those two families of unramified representations.
In the following, we recall from \cite{MT11} the classification of these two families in terms of {\bf Jordan blocks}, which
also provide explicit construction of the two families of unramified representations.

A pair $(\chi, m)$, where $\chi$ is an unramified unitary
character of $F_v^*$ and $m \in \BZ_{>0}$, is called
a {\bf Jordan block}.
Define $\rm{Jord}_{sn}(n)$ to be the collection of all sets
$\rm{Jord}$ of the following form:
\begin{equation}\label{sec7equ1}
\{(\lambda_0, 2n_1+1), \ldots, (\lambda_0, 2n_k+1), (1_{\GL_1}, 2m_1+1),
\ldots, (1_{\GL_1}, 2m_l+1)\}
\end{equation}
where $\lambda_0$ is the unique non-trivial unramified
unitary character of $F_v^*$ of order 2,
given by the local Hilbert symbol $(\delta, \cdot)_{F_v^*}$, with $\delta$ being a non-square unit in $\CO_{F_v}$;
$k$ is even,
$$
0 \leq n_1 < n_2 < \cdots < n_k, \ \ \ \ 0 \leq m_1 < m_2 < \cdots < m_l;
$$
and 
$$
\sum_{i=1}^k (2n_i+1) + \sum_{j=1}^l (2m_j+1) = 2n+1.
$$
It is easy to see that $l$ is automatically odd.

For each $\rm{Jord} \in \rm{Jord}_{sn}(n)$, we can associate a
representation $\sigma(\rm{Jord})$, which is the unique irreducible
unramified subquotient of the following induced
representation
\begin{align}\label{sec7equ2}
\begin{split}
& \nu^{\frac{n_{k-1}-n_k}{2}} \lambda_0({\det}_{n_{k-1}+n_k+1})
\times
\nu^{\frac{n_{k-3}-n_{k-2}}{2}} \lambda_0({\det}_{n_{k-3}+n_{k-2}+1})\\
& \times \cdots \times
\nu^{\frac{n_{1}-n_2}{2}} \lambda_0({\det}_{n_{1}+n_2+1})\\
& \times
\nu^{\frac{m_{l-1}-m_l}{2}} 1_{{\det}_{m_{l-1}+m_l+1}}
\times
\nu^{\frac{m_{l-3}-m_{l-2}}{2}} 1_{{\det}_{m_{l-3}+m_{l-2}+1}}\\
& \times \cdots \times
\nu^{\frac{m_{2}-m_3}{2}} 1_{{\det}_{m_{2}+m_3+1}}
\rtimes 1_{\Sp_{2m_1}}.
\end{split}
\end{align}

\begin{thm}[Theorem 5-8, \cite{MT11}] \label{thm8}
Assume that $n > 0$. The map $\rm{Jord} \mapsto \sigma(\rm{Jord})$ defines a
one-to-one correspondence between the set
$\rm{Jord}_{sn}(n)$ to the set of all
irreducible strongly negative
unramified representations of $\Sp_{2n}(F_v)$.
\end{thm}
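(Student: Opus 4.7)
The plan is to establish Theorem \ref{thm8} by constructing $\sigma(\mathrm{Jord})$ explicitly as a spherical subquotient, verifying the strongly negative condition via Casselman-type criteria, and then matching an arbitrary strongly negative unramified representation to some $\mathrm{Jord} \in \mathrm{Jord}_{sn}(n)$ through an analysis of the cuspidal support.

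First I would verify that the formula defining $\sigma(\mathrm{Jord})$ makes sense. The representation on the right of (\ref{sec7equ2}) is parabolically induced from an unramified representation of a Levi subgroup of the form $\GL_{n_{k-1}+n_k+1} \times \cdots \times \GL_{n_1+n_2+1} \times \GL_{m_{l-1}+m_l+1} \times \cdots \times \GL_{m_2+m_3+1} \times \Sp_{2m_1}$; by transitivity of induction this sits inside an unramified principal series, which has a unique irreducible unramified subquotient. This produces $\sigma(\mathrm{Jord})$ and gives well-definedness of the map.

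The next step is to check that $\sigma(\mathrm{Jord})$ is strongly negative. Using the characterization of strongly negative representations through the exponents of their Jacquet modules (as reviewed in \cite{M06}), I would compute $r_P(\sigma(\mathrm{Jord}))$ for $P$ a standard parabolic subgroup via the geometric lemma, starting from the induced representation (\ref{sec7equ2}). The condition $n_1 < n_2 < \cdots < n_k$ with $k$ even (and the analogous chain for the $m_j$'s) ensures that each twisting exponent $\frac{n_{i-1}-n_i}{2}$ or $\frac{m_{j-1}-m_j}{2}$ is strictly negative, which feeds into the required negativity of Jacquet module exponents; the even/odd parity of $k$ and $l$ is exactly what is needed so that the inducing unitary twisted Steinberg-type blocks pair up correctly and the residual $\Sp_{2m_1}$-component contributes the trivial strongly negative piece. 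Injectivity of $\mathrm{Jord} \mapsto \sigma(\mathrm{Jord})$ then follows because the cuspidal support of $\sigma(\mathrm{Jord})$, viewed as a multiset of unramified characters of the torus, completely determines (and is determined by) the multisets $\{(\lambda_0, 2n_i+1)\}$ and $\{(1_{\GL_1}, 2m_j+1)\}$.

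The main obstacle is surjectivity: showing every irreducible strongly negative unramified $\pi$ of $\Sp_{2n}(F_v)$ has the form $\sigma(\mathrm{Jord})$. My plan is to start from the cuspidal support of $\pi$, which is an unramified character $\chi = \chi_1 \otimes \cdots \otimes \chi_n$ of the maximal torus. The strongly negative condition restricts which characters can appear and with what exponents; in particular, the known reducibility points for $\chi \rtimes 1_{\Sp_{2m}}$ in unramified principal series of classical groups force the unitary parts of the $\chi_i$ to lie in $\{1_{\GL_1}, \lambda_0\}$, since these are the only unramified unitary characters giving half-integral reducibility matching the symplectic root system. Using Jantzen-type decomposition to split the cuspidal support into the $1_{\GL_1}$-part and the $\lambda_0$-part, and arguing inductively on $n$, I would reconstruct the chains $n_1 < \cdots < n_k$ and $m_1 < \cdots < m_l$ from the exponents in the cuspidal support, with the parity conditions on $k,l$ coming from the requirement that the resulting Langlands-type data actually yields a representation (not merely a virtual character) and that this representation be strongly negative rather than merely negative. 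This inductive matching argument, which hinges on a precise understanding of unramified principal series reducibility for $\Sp_{2n}$, is the technical heart of the proof.
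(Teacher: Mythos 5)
This statement is not proved in the paper: it is quoted verbatim as Theorem 5-8 of Muic--Tadic \cite{MT11} and imported as a known classification result, so there is no ``paper's own proof'' to compare against. Your sketch is a plausible reconstruction of the kind of argument used in \cite{MT11} (construct $\sigma(\mathrm{Jord})$ as the spherical subquotient of \eqref{sec7equ2}, check strong negativity via Jacquet-module exponents, read off injectivity from the cuspidal support, and prove surjectivity by reassembling the Jordan blocks from the cuspidal support of an arbitrary strongly negative unramified representation), but as written it has a real gap at the surjectivity step. You assert that the strongly negative condition ``forces the unitary parts of the $\chi_i$ to lie in $\{1_{\GL_1},\lambda_0\}$'' because of ``known reducibility points,'' but you neither state those reducibility points nor explain how strong negativity (as opposed to mere negativity, or to admissibility of the Langlands data) excludes the other unramified unitary characters; in Muic--Tadic this is exactly the delicate part, hinging on a precise description of reducibility of $\nu^{s}\chi({\det}_m)\rtimes\sigma$ and on the combinatorics that pairs exponents into the chains $n_1<\cdots<n_k$ and $m_1<\cdots<m_l$ with the correct parities. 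Similarly, your justification of the parity conditions on $k$ and $l$ (``so that the inducing unitary twisted Steinberg-type blocks pair up correctly'') is a heuristic rather than an argument. The well-definedness and injectivity parts of your sketch are sound. In short: the outline points in the right direction, but to stand on its own it needs the concrete reducibility input and the inductive bookkeeping carried out, which the present paper sidesteps by citing \cite{MT11}.
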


Note that $1_{\Sp_0}$ is considered to be strongly negative.
The inverse of the map in Theorem \ref{thm8} is denoted
by $\sigma \mapsto \rm{Jord}(\sigma)$.

Irreducible negative unramified representations can be constructed from
irreducible
strongly negative unramified representations of smaller rank groups as follows.

\begin{thm}[Thereom 5-10, \cite{MT11}]\label{thm9}
For any sequence of pairs $(\chi_1, n_1), \ldots, (\chi_t, n_t)$ with
$\chi_i$ being unramified unitary characters of $F_v^*$ and
$n_i \in \BZ_{\geq 1}$, for $1 \leq i \leq t$,
and for a strongly negative representation $\sigma_{sn}$ of $\Sp_{2n'}(F_v)$
with $\sum_{i=1}^t n_i + n' = n$,
the unique irreducible unramified subquotient
of the following induced representation
\begin{equation}\label{thm9equ1}
\chi_1 ({\det}_{{n_1}}) \times \cdots \times \chi_t ({\det}_{{n_t}}) \rtimes \sigma_{sn}
\end{equation}
is negative and it is a subrepresentation.

Conversely, any irreducible negative
unramified representation
$\sigma_{neg}$ of $\Sp_{2n}(F_v)$ can be obtained from the above construction.
The data $(\chi_1, n_1), \ldots, (\chi_t, n_t)$ and $\sigma_{sn}$
are unique, up to permutations and taking inverses of $\chi_i$'s.
\end{thm}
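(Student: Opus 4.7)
The plan is to combine the Langlands classification for irreducible admissible representations of $\Sp_{2n}(F_v)$ with the Borel--Casselman description of unramified representations via Iwahori-spherical vectors and Satake parameters, reducing the problem to an analysis of Jacquet modules along standard Levi subgroups of the form $\GL_{n_1}\times\cdots\times\GL_{n_t}\times\Sp_{2n'}$, combined with the classification of strongly negative unramified representations already recorded in Theorem \ref{thm8}.

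For the forward direction, given unitary unramified characters $\chi_1,\ldots,\chi_t$, positive integers $n_1,\ldots,n_t$, and a strongly negative unramified representation $\sigma_{sn}$ of $\Sp_{2n'}(F_v)$, the induced representation in \eqref{thm9equ1} is unitary, since each factor $\chi_i({\det}_{n_i})$ is a unitary character of $\GL_{n_i}(F_v)$ and $\sigma_{sn}$ is tempered, hence unitary. Unitarity of an admissible representation forces complete reducibility, and the constraint that every irreducible unramified constituent must share the Satake parameter dictated by the inducing data singles out a unique unramified irreducible summand $\sigma_{neg}$. That $\sigma_{neg}$ is negative in the Muic--Tadic sense is then verified by computing the exponents of its Borel-Jacquet module: the $\GL$-factors contribute exponents with vanishing real part by unitarity of the $\chi_i$, while $\sigma_{sn}$ contributes exponents satisfying the strong negativity inequalities by hypothesis, and concatenating them produces exponents matching the definition of a negative representation.

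For the converse, starting from an irreducible negative unramified $\sigma_{neg}$ of $\Sp_{2n}(F_v)$, I would peel off $\GL$-inducing data inductively by analyzing the Jacquet module of $\sigma_{neg}$ along maximal parabolics of type $\GL_{n_i}\times\Sp_{2(n-n_i)}$, using Tadic's structural formula for Jacquet modules of classical groups. Each step detaches a factor of the form $\chi_i({\det}_{n_i})$ whose character is forced to be unitary by the negativity assumption; once no further such factor can be detached, what remains is an irreducible unramified representation of a smaller symplectic group whose Borel-Jacquet exponents satisfy the strong negativity condition, hence is strongly negative by Theorem \ref{thm8}. Uniqueness of the data up to permutation and the replacements $\chi_i\mapsto\chi_i^{-1}$ then follows from the uniqueness of cuspidal support together with the Weyl group action for the root system of type $C_n$, which permits exactly the sign change $\chi\mapsto\chi^{-1}$ on each $\GL$-block.

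The main obstacle I foresee is showing that the unique irreducible unramified constituent of \eqref{thm9equ1} appears as a subrepresentation rather than merely as a subquotient, because \eqref{thm9equ1} can genuinely be reducible whenever certain coincidences hold between the data $(\chi_i,n_i)$ and the Jordan set $\rm{Jord}(\sigma_{sn})$. In the unitary setting this obstacle dissolves: an admissible unitary representation decomposes as a finite direct sum of its irreducible components, so the unramified constituent is automatically a direct summand, and hence a subrepresentation. The most delicate combinatorial point that remains is controlling exactly which identifications among different unordered collections $\{(\chi_i,n_i)\}$ can produce the same $\sigma_{neg}$; here one tracks the cuspidal support of $\sigma_{neg}$ in the Grothendieck group of smooth representations and compares with the explicit structure of the strongly negative base supplied by Theorem \ref{thm8}, which pins down the allowed redundancies to precisely permutations and inversions.
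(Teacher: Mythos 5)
The paper does not prove this theorem: it is quoted verbatim (as Theorem 5-10) from Muic--Tadic \cite{MT11}, so there is no internal proof to compare against, and the right standard is whether your reconstruction is itself correct.

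Your overall strategy (unitarity $\Rightarrow$ complete reducibility $\Rightarrow$ the unique unramified constituent is a direct summand, with Jacquet-module peeling for the converse) is broadly sound and is in the spirit of the Muic--Tadic arguments, but there is one genuine conceptual error. You write that $\sigma_{sn}$ ``is tempered, hence unitary.'' Strongly negative unramified representations are, in general, \emph{not} tempered; they sit at the opposite end of the Langlands/Casselman dichotomy. The simplest counterexample is $1_{\Sp_{2m_1}}$ itself, which by Theorem \ref{thm8} is strongly negative (take $\rm{Jord}=\{(1_{\GL_1},2m_1+1)\}$) but is a Langlands quotient with strictly positive exponents and is certainly not tempered for $m_1\geq 1$. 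The correct reason the induced representation \eqref{thm9equ1} is unitary is that strongly negative representations are unitary by a separate theorem (Corollary 3.8 of \cite{M07}, cited in the paper right after this statement), together with unitarity of the $\chi_i$ and preservation of unitarity under normalized parabolic induction. Since the rest of your forward direction leans on this unitarity, the conclusion survives, but the stated justification does not.

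Two smaller gaps worth flagging. First, ``Satake parameter matching'' does not single out the unramified constituent, since every irreducible subquotient of \eqref{thm9equ1} has the same cuspidal support; what forces uniqueness is that the space of vectors fixed by a hyperspecial maximal compact subgroup in \eqref{thm9equ1} is one-dimensional (via the Iwasawa decomposition), so only one constituent can carry it. Second, your verification of negativity by ``concatenating exponents'' and your uniqueness argument via the Weyl group of type $C_n$ are plausible sketches, but they elide exactly the parts that are delicate in \cite{MT11}: the Muic definition of negative involves reordering by the Weyl group before testing the inequalities, and the sign changes in $W(C_n)$ act on individual entries, not on whole $\GL_{n_i}$-blocks, so justifying that the allowed redundancies are exactly permutations and inversions of the pairs $(\chi_i,n_i)$ requires more care than you indicate.
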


For any irreducible negative unramified representation $\sigma_{neg}$
with data in Theorem \ref{thm9}, we define
\begin{equation*}
\rm{Jord}(\sigma_{neg}) = \rm{Jord}(\sigma_{sn}) \cup \{(\chi_i,n_i),
(\chi_i^{-1},n_i) | 1 \leq i \leq t\}.
\end{equation*}
By Corollary 3.8 of \cite{M07},
any irreducible negative representation is unitary. In
particular, we have the following
\begin{cor}
Any irreducible negative unramified representation of $\Sp_{2n}(F_v)$ is
unitary.
\end{cor}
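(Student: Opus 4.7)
The statement is essentially a direct consequence of the structural classification in Theorem \ref{thm9} combined with Muić's unitarity theory for negative representations. The plan is as follows.

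First, by Theorem \ref{thm9}, any irreducible negative unramified representation $\sigma_{neg}$ of $\Sp_{2n}(F_v)$ is realized as the unique irreducible unramified subrepresentation of an induced representation of the form
\[
I \;=\; \chi_1({\det}_{n_1}) \times \cdots \times \chi_t({\det}_{n_t}) \rtimes \sigma_{sn},
\]
where each $\chi_i$ is an unramified \emph{unitary} character of $F_v^\times$ and $\sigma_{sn}$ is a strongly negative unramified representation of a smaller symplectic group $\Sp_{2n'}(F_v)$. The inducing data on the Levi factor is thus unitary in each $\GL_{n_i}$-block, up to the known unitarity of the strongly negative piece $\sigma_{sn}$.

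The next step is to establish unitarity of $\sigma_{sn}$ itself. Theorem \ref{thm8} provides the explicit description of $\sigma_{sn}$ via the Jordan block data \eqref{sec7equ1} and the induced representation \eqref{sec7equ2}; strongly negative unramified representations are in particular tempered, hence unitary, as discussed in \cite{M06}. Combined with the unitarity of the $\chi_i$, this implies that $I$ is unitarily parabolically induced from a unitary representation of the Levi, so $I$ itself is unitary.

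Finally, one invokes Corollary 3.8 of \cite{M07}, which asserts that every irreducible negative representation of $\Sp_{2n}(F_v)$ is unitary, together with the fact from Theorem \ref{thm9} that $\sigma_{neg}$ embeds as a subrepresentation (not merely a subquotient) of $I$. Specializing to our unramified $\sigma_{neg}$ gives the desired conclusion.

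The only delicate point is the passage from unitarity of $I$ to unitarity of the specific subrepresentation $\sigma_{neg}$, since in general unitary induced representations may have non-unitary irreducible constituents. This is precisely what is handled in \cite{M07}, where a careful analysis of Jantzen decomposition and complementary series arguments for negative representations yields the semisimplicity and unitarity statements needed; so the potential obstacle is absorbed by the reference, and no additional work is required here.
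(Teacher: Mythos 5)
The paper's own proof is a one-line citation: ``By Corollary 3.8 of \cite{M07}, any irreducible negative representation is unitary,'' and the stated Corollary is just the unramified specialization of that. Your proposal ultimately invokes the same Corollary 3.8 of \cite{M07}, so the conclusion is reached, but the surrounding argument is both redundant and contains a genuine error.

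The error is the assertion that strongly negative unramified representations are ``in particular tempered, hence unitary.'' This is false. In Muic's terminology the inclusion runs the other way: tempered representations form a proper subclass of negative ones (all cuspidal exponents vanish versus being nonpositive), and strongly negative representations are typically \emph{not} tempered. Indeed, the construction \eqref{sec7equ2} induces from characters of the form $\nu^{\frac{n_{k-1}-n_k}{2}}\lambda_0$ with strictly negative exponents, and the trivial representation $1_{\Sp_{2m_1}}$ appearing there is the standard example of a non-tempered representation; the paper even notes that $1_{\Sp_0}$ is counted among the strongly negative ones. Unitarity of strongly negative (and more generally negative) representations is precisely the nontrivial theorem being cited, not a consequence of temperedness. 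So the intermediate claim that the induced representation $I$ is ``unitarily parabolically induced from a unitary representation of the Levi'' cannot be established on the grounds you give; it would itself already require the unitarity result you are trying to prove.

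There is also an internal inconsistency at the end: you worry about passing from unitarity of $I$ to unitarity of the subrepresentation $\sigma_{neg}$ and say this is absorbed by \cite{M07}, but if $I$ were known unitary and $\sigma_{neg}$ a genuine smooth subrepresentation, unitarizability of $\sigma_{neg}$ would follow by restricting the invariant inner product; the subtlety you are gesturing at concerns subquotients, not subrepresentations. In any case, once you cite Corollary 3.8 of \cite{M07} the entire preceding discussion is unnecessary. The clean proof is exactly that citation, as in the paper.
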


To describe the general unramified unitary dual, we need to recall the following definition.

\begin{defn}[Definition 5-13, \cite{MT11}] \label{def1}
Let $\CM^{unr}(n)$ be the set of pairs
$(\textbf{e}, \sigma_{neg})$, where $\textbf{e}$ is a
multiset of triples $(\chi, m, \alpha)$ with $\chi$ being an unramified
unitary character of $F_v^*$, $m \in \BZ_{>0}$ and $\alpha \in \BR_{>0}$, and
$\sigma_{neg}$ is an irreducible negative unramified
representation of $\Sp_{2n''}(F_v)$, having the property that $\sum_{(\chi, m)} m \cdot \#{\textbf{e}(\chi,m)} + n'' = n$
with $\textbf{e}(\chi,m) = \{\alpha | (\chi, m, \alpha) \in \textbf{e}\}$.
Note that $\alpha \in \textbf{e}(\chi,m)$ is counted with multiplicity.

Let $\CM^{u,unr}(n)$ be the subset of $\CM^{unr}(n)$ consisting of pairs
$(\textbf{e}, \sigma_{neg})$, which satisfy the following conditions:
\begin{enumerate}
\item If $\chi^2 \neq 1_{\GL_1}$, then $\textbf{e}(\chi,m) = \textbf{e}(\chi^{-1},m)$,
and $0 < \alpha < \frac{1}{2}$, for all $\alpha \in \textbf{e}(\chi,m)$.
\item If $\chi^2 = 1_{\GL_1}$, and $m$ is even, then
$0 < \alpha < \frac{1}{2}$, for all $\alpha \in \textbf{e}(\chi,m)$.
\item If $\chi^2 = 1_{\GL_1}$, and $m$ is odd, then
$0 < \alpha < 1$, for all $\alpha \in \textbf{e}(\chi,m)$.
\end{enumerate}
Write elements in $\textbf{e}(\chi,m)$ as follows:
\begin{equation*}
0 < \alpha_1 \leq \cdots \leq \alpha_k \leq \frac{1}{2} < \beta_1 \leq
\cdots \leq \beta_l < 1,
\end{equation*}
with $k,l\in \BZ_{\geq 0}$. They satisfy the following conditions:

(a) If $(\chi, m) \notin \rm{Jord}(\sigma_{neg})$,
then $k+l$ is even.

(b) If $k \geq 2$, $\alpha_{k-1} \neq \frac{1}{2}$.

(c) If $l \geq 2$, then $\beta_1 < \beta_2 < \cdots < \beta_l$.

(d) $\alpha_i + \beta_j \neq 1$, for any $1 \leq i \leq k$,
$1 \leq j \leq l$.

(e) If $l \geq 1$, then $\#\{i | 1-\beta_1 < \alpha_i  \leq \frac{1}{2}\}$
is even.

(f) If $l \geq 2$, then $\#\{i | 1-\beta_{j+1} < \alpha_i < \beta_j\}$
is odd, for any $1 \leq j \leq l-1$.
\end{defn}

\begin{thm}[Theorem 5-14, \cite{MT11}]\label{thm10}
The map
$$(\textbf{e}, \sigma_{neg}) \mapsto \times_{(\chi, m, \alpha) \in \textbf{e}}
v^{\alpha} \chi({\det}_m) \rtimes \sigma_{neg}$$
defines a one-to-one correspondence between
the set $\CM^{u,unr}(n)$ and the set of equivalence classes of all
irreducible unramified unitary representations of $\Sp_{2n}(F_v)$.
\end{thm}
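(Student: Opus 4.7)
The plan is to follow the inductive scheme of Muic-Tadic, building the unramified unitary dual of $\Sp_{2n}(F_v)$ from the already classified strongly negative and negative pieces (Theorems \ref{thm8} and \ref{thm9}) by parabolic induction with $\GL$-type complementary series exponents. The overarching principle is that unitary unramified representations are exactly those obtained from a unitary negative base $\sigma_{neg}$ by inducing with $\GL$-segments whose parameters lie in a region where the standard Hermitian form on the induced module remains positive definite.

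First, I would establish that the map is well-defined and injective. Given $(\textbf{e}, \sigma_{neg}) \in \CM^{u,unr}(n)$, the parabolically induced representation $\Pi := \times_{(\chi,m,\alpha) \in \textbf{e}} \nu^{\alpha} \chi({\det}_m) \rtimes \sigma_{neg}$ has a unique irreducible unramified subquotient by the Langlands classification of unramified representations. The genericity conditions (b)--(f) in Definition \ref{def1} are tailored so that this subquotient is the full induced representation. Injectivity then follows by recovering $\sigma_{neg}$ and $\textbf{e}$ from the Satake parameters of the representation: one extracts $\GL$-segments of the form $\{\nu^{\alpha}\chi,\ldots,\nu^{-\alpha}\chi\}$ of length $m$ by a Zelevinsky-type segment argument, and reads off $\mathrm{Jord}(\sigma_{neg})$ from the residual multiset of Satake parameters. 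The normalization conventions in Definition \ref{def1} (e.g., $0 < \alpha < \frac{1}{2}$ or $0 < \alpha < 1$, symmetry under $\chi \mapsto \chi^{-1}$) force a unique choice.

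Second, I would prove that the representation produced by the map is actually unitary. Since $\sigma_{neg}$ is unitary (corollary after Theorem \ref{thm9}), and each factor $\nu^{\alpha} \chi({\det}_m)$ is a unitary complementary series of $\GL_m$ in the stipulated range of $\alpha$, the question reduces to showing that the standard intertwining operator on $\Pi$ is positive semidefinite. I would argue by analytic deformation: start at $\alpha = 0$, where $\Pi$ is a unitary induction of a tempered-like representation, and continuously deform inside the polytope cut out by conditions (1)--(3). Conditions (a)--(f) are exactly what is needed to guarantee that the deformation path avoids every reducibility hyperplane at which the Hermitian form could fail to remain positive definite; the reducibility points themselves are classified via the analysis of $\nu^{\alpha}\chi({\det}_m) \rtimes \sigma_{neg}$ in terms of Jordan blocks of $\sigma_{neg}$, in the style of Tadic and Jantzen.

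Third, and most crucially, surjectivity requires showing that no unramified unitary representation lies outside this parameter set. I would proceed contrapositively: any irreducible unramified $\pi$ is the Langlands quotient of some $\Pi$ with data $(\textbf{e}, \sigma_{neg}) \in \CM^{unr}(n)$, and the task is to show that if this data violates any of (1)--(3) or (a)--(f), then $\pi$ is non-unitary. For each forbidden region, I would construct explicit signature-changing vectors inside a Jacquet module, using the composition series structure at the nearest reducibility point together with signature computations in the style of Barbasch's intertwining-operator arguments. This last step is the main obstacle: conditions (a)--(f) are extremely tight, and establishing their sharpness amounts to a detailed case-by-case reducibility analysis that is essentially equivalent to the spherical Kazhdan-Lusztig computations underlying Barbasch's parallel classification in \cite{Bar10}. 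Induction on $n$, together with a reduction to rank-one signature computations on each $\GL_m$ factor tensored against $\sigma_{neg}$, is the natural way to organize this final step.
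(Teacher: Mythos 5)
This statement is not proved in the paper at all: it is Theorem 5-14 of Muic--Tadic, cited verbatim as \cite{MT11}, and the paper simply recalls it as background for the classification of the unramified unitary dual of $\Sp_{2n}(F_v)$. There is therefore no ``paper's own proof'' to compare your attempt against; the authors take it as an external input, on the same footing as Theorems \ref{thm8} and \ref{thm9}, which are likewise quoted from \cite{MT11}.

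Taken on its own terms, your sketch does reflect the broad architecture of the Muic--Tadic argument (reduction to negative representations, complementary series and analytic deformation to establish unitarity, and a signature/reducibility analysis to establish sharpness of the exponent conditions, parallel to Barbasch's approach in \cite{Bar10}). But it stops well short of a proof. The well-definedness step needs more care than ``the conditions are tailored so that the subquotient is the full induced representation'': you must actually verify that the induced module is irreducible for all $(\textbf{e},\sigma_{neg})\in\CM^{u,unr}(n)$, and the interaction between $\textbf{e}$ and $\Jord(\sigma_{neg})$ encoded in condition (a) is precisely where that argument becomes delicate. The deformation argument for unitarity requires an explicit identification of the reducibility hyperplanes of $\nu^{\alpha}\chi(\det_m)\rtimes\sigma_{neg}$ in terms of $\Jord(\sigma_{neg})$, and the surjectivity direction (non-unitarity outside the parameter set) is, as you acknowledge, the genuinely hard part and amounts to a detailed case analysis that your sketch does not supply. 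In short: a reasonable roadmap, not a proof, and in any case not something this paper undertakes to prove.
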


In Section 4, we will mainly consider the following two types of strongly negative unramified unitary representations:

\textbf{Type I}: An irreducible strongly negative unramified unitary representations of $\Sp_{2n}(F_v)$ is called of Type I if it is of the following form:

\begin{align}\label{typeI}
\begin{split}
& \nu^{\frac{m_{l-1}-m_l}{2}} 1_{{\det}_{m_{l-1}+m_l+1}}
\times
\nu^{\frac{m_{l-3}-m_{l-2}}{2}} 1_{{\det}_{m_{l-3}+m_{l-2}+1}}\\
& \times \cdots \times
\nu^{\frac{m_{2}-m_3}{2}} 1_{{\det}_{m_{2}+m_3+1}}
\rtimes 1_{\Sp_{2m_1}}.
\end{split}
\end{align}

\textbf{Type II}: An irreducible strongly negative unramified unitary representations of $\Sp_{2n}(F_v)$ is called of Type II if it is of the following form:

\begin{align}\label{typeII}
\begin{split}
& \nu^{\frac{n_{k-1}-n_k}{2}} \lambda_0({\det}_{n_{k-1}+n_k+1})
\times
\nu^{\frac{n_{k-3}-n_{k-2}}{2}} \lambda_0({\det}_{n_{k-3}+n_{k-2}+1})\\
& \times \cdots \times
\nu^{\frac{n_{1}-n_2}{2}} \lambda_0({\det}_{n_{1}+n_2+1})
\rtimes 1_{\Sp_{0}}.
\end{split}
\end{align}

In Section 5, we will mainly consider the following two types of unramified unitary representations:

\textbf{Type III}: An irreducible unramified unitary representations of $\Sp_{2n}(F_v)$ is called of Type III if it is of the following form:

\begin{equation}\label{typeIII}
\sigma = \times_{(\chi, m, \alpha) \in \textbf{e}}
v^{\alpha} \chi({\det}_m) \rtimes \sigma_{neg} \leftrightarrow (\textbf{e}, \sigma_{neg}),
\end{equation}
where $\sigma_{neg}$ is the unique irreducible negative unramified  subrepresentation
of the following induced representation
\begin{equation*}
\chi_1 ({\det}_{{n_1}}) \times \cdots \times \chi_t ({\det}_{{n_t}}) \rtimes \sigma_{sn},
\end{equation*}
with $\sigma_{sn}$ being the unique strongly negative unramified constituent of the following induced representation corresponding to $\rm{Jord}(\sigma_{sn})$ of the form \eqref{sec7equ1}:
\begin{align*}
\begin{split}
& \nu^{\frac{m_{l-1}-m_l}{2}} 1_{{\det}_{m_{l-1}+m_l+1}}
\times
\nu^{\frac{m_{l-3}-m_{l-2}}{2}} 1_{{\det}_{m_{l-3}+m_{l-2}+1}}\\
& \times \cdots \times
\nu^{\frac{m_{2}-m_3}{2}} 1_{{\det}_{m_{2}+m_3+1}}
\rtimes 1_{\Sp_{2m_1}}.
\end{split}
\end{align*}

\textbf{Type IV}: An irreducible unramified unitary representations of $\Sp_{2n}(F_v)$ is called of Type IV if it is of the following form:

\begin{equation}\label{typeIV}
\sigma = \times_{(\chi, m, \alpha) \in \textbf{e}}
v^{\alpha} \chi({\det}_m) \rtimes \sigma_{neg} \leftrightarrow (\textbf{e}, \sigma_{neg}),
\end{equation}
where $\sigma_{neg}$ is the unique irreducible negative unramified  subrepresentation
of the following induced representation
\begin{equation*}
\chi_1 ({\det}_{{n_1}}) \times \cdots \times \chi_t ({\det}_{{n_t}}) \rtimes \sigma_{sn},
\end{equation*}
with $\sigma_{sn}$ being the unique strongly negative unramified constituent of the following induced representation corresponding to $\rm{Jord}(\sigma_{sn})$ of the form \eqref{sec7equ1}:
\begin{align*}
\begin{split}
& \nu^{\frac{n_{k-1}-n_k}{2}} \lambda_0({\det}_{n_{k-1}+n_k+1})
\times
\nu^{\frac{n_{k-3}-n_{k-2}}{2}} \lambda_0({\det}_{n_{k-3}+n_{k-2}+1})\\
& \times \cdots \times
\nu^{\frac{n_{1}-n_2}{2}} \lambda_0({\det}_{n_{1}+n_2+1})
\rtimes 1_{\Sp_{0}}.
\end{split}
\end{align*}


\subsection{Arthur Parameters and Unramified Local Components}

For a given global Arthur parameter $\psi \in \widetilde{\Psi}_2(\Sp_{2n})$, $\wt{\Pi}_{\psi}(\varepsilon_{\psi})$
is the corresponding automorphic $L^2$-packet.
It is clear that the irreducible unramified representations determined by
the
local Arthur parameter $\psi_v$ at almost all unramified local places $v$ of
$F$ determine the unramified local components of $\pi$ for all members
$\pi \in \wt{\Pi}_{\psi}(\varepsilon_{\psi})$.
We fix one of the members, $\pi \in \wt{\Pi}_{\psi}(\varepsilon_{\psi})$,
and are going to describe the unramified local component $\pi_v$, where $v$ is a finite place of $F$ such that the local Arthur parameter
$$
\psi_v=\psi_{1,v}\boxplus\psi_{2,v}\boxplus\cdots\boxplus\psi_{r,v}
$$
is unramified, i.e. $\tau_{i,v}$ for $i=1,2,\cdots,r$ are all unramified.

Rewrite the global Arthur parameter $\psi$ as follows:
\begin{equation}
\psi = [\boxplus_{i=1}^k (\tau_i, 2b_i)] \boxplus [\boxplus_{j=k+1}^{k+l}
(\tau_j, 2b_j+1)] \boxplus [\boxplus_{s=k+l+1}^{k+l+2t+1} (\tau_s, 2b_s+1)],
\end{equation}
where $\tau_i \in \CA_{\cusp}(\GL_{2a_i})$ is of symplectic type for $1 \leq i \leq k$, $\tau_j \in \CA_{\cusp}(\GL_{2a_j})$ and $\tau_s \in \CA_{\cusp}(\GL_{2a_s+1})$ are of orthogonal type for $k+1 \leq j \leq k+l$ and $k+l+1 \leq s \leq k+l+2t+1$. Let $I=\{1,2,\ldots,k\}$, $J=\{k+1, k+2, \ldots, k+l\}$, and $S=\{k+l+1, k+l+2, \ldots, k+l+2t+1\}$. Let $J_1$ be the subset of $J$ such that $\omega_{\tau_{j,v}}=1$, and $J_2=J \bs J_1$, that is, for $j \in J_2$, $\omega_{\tau_{j,v}}=\lambda_0$.
Let $S_1$ be the subset of $S$ such that $\omega_{\tau_{s,v}}=1$, and $S_2=S \bs S_1$, that is, for $s \in S_2$, $\omega_{\tau_{s,v}}=\lambda_0$.
From the definition of Arthur parameters, we can easily see that $\#\{J_2\} \cup \#\{S_2\}$ is even, which implies that $\#\{J_2\} \cup \#\{S_1\}$ is odd. The local unramified Arthur parameter $\psi_v$ has the following structures:
\begin{itemize}
\item For $i \in I$,
$$\tau_{i,v} = \times_{q=1}^{a_i} \nu^{\beta^i_q} \chi^i_q
\times_{q=1}^{a_i} \nu^{-\beta^i_q} \chi_q^{i,-1},$$
where $0 \leq \beta^i_q < \frac{1}{2}$, for $1 \leq q \leq a_i$, and $\chi^i_q$'s are unramified unitary
characters of $F_v^*$.

\item For $j \in J_1$,
$$\tau_{j,v} = \times_{q=1}^{a_j} \nu^{\beta^j_q} \chi^j_q
\times_{q=1}^{a_j} \nu^{-\beta^j_q} \chi_q^{j,-1},$$
where $0 \leq \beta^j_q < \frac{1}{2}$, for $1 \leq q \leq a_j$, and $\chi^j_q$'s are unramified unitary
characters of $F_v^*$.

\item For $j \in J_2$,
$$\tau_{j,v} = \times_{q=1}^{a_j-1} \nu^{\beta^j_q} \chi^j_q \times
\lambda_0 \times 1_{GL_1}
\times_{q=1}^{a_j-1} \nu^{-\beta^j_q} \chi_q^{j,-1},$$
where $0 \leq \beta^j_q < \frac{1}{2}$, for $1 \leq q \leq a_j$, and $\chi^j_q$'s are unramified unitary
characters of $F_v^*$.

\item For $s \in S_1$,
$$\tau_{s,v} = \times_{q=1}^{a_s} \nu^{\beta^s_q} \chi^s_q \times
1_{GL_1}
\times_{q=1}^{a_s} \nu^{-\beta^s_q} \chi_q^{s,-1},$$
where $0 \leq \beta^s_q < \frac{1}{2}$, for $1 \leq q \leq a_s$, and $\chi^s_q$'s are unramified unitary
characters of $F_v^*$.

\item For $s \in S_2$,
$$\tau_{s,v} = \times_{q=1}^{a_s} \nu^{\beta^s_q} \chi^s_q \times
\lambda_0
\times_{q=1}^{a_s} \nu^{-\beta^s_q} \chi_q^{s,-1},$$
where $0 \leq \beta^s_q < \frac{1}{2}$, for $1 \leq q \leq a_s$, and $\chi^s_q$'s are unramified unitary
characters of $F_v^*$.
\end{itemize}

We define
\begin{align*}
\rm{Jord}_1
= \ &\{(\lambda_0, 2b_j+1), j \in J_2; (\lambda_0, 2b_s+1), s \in S_2;\\
& (1_{GL_1}, 2b_j+1), j \in J_2; (1_{GL_1}, 2b_s+1), s \in S_1\}.
\end{align*}
Note that $\rm{Jord}_1$ is a multi-set.
Let $\rm{Jord}_2$ be set consists of different Jordan blocks with odd multiplicities in
$\rm{Jord}_1$. Then $\rm{Jord}_2$ has the form of \eqref{sec7equ1}, and by Theorem \ref{thm8}, there is a corresponding irreducible strongly negative unramified representation $\sigma_{sn}$.

Then we define the following Jordan blocks:
\begin{align*}
\rm{Jord}_I & = \{(\chi_q^i, 2b_i), (\chi_q^{i,-1}, 2b_i), i \in I, 1 \leq q \leq a_i, \beta_q^i=0\},\\
\rm{Jord}_{J_1} & = \{(\chi_q^j, 2b_j+1), (\chi_q^{j,-1}, 2b_j+1), j \in J_1, 1 \leq q \leq a_j, \beta_q^j=0\},\\
\rm{Jord}_{J_2} & = \{(\chi_q^j, 2b_j+1), (\chi_q^{j,-1}, 2b_j+1), j \in J_2, 1 \leq q \leq a_j-1, \beta_q^j=0\},\\
\rm{Jord}_{S_1} & = \{(\chi_q^s, 2b_s+1), (\chi_q^{s,-1}, 2b_s+1), s \in S_1, 1 \leq q \leq a_s, \beta_q^s=0\},\\
\rm{Jord}_{S_2} & = \{(\chi_q^s, 2b_s+1), (\chi_q^{s,-1}, 2b_s+1), s \in S_2, 1 \leq q \leq a_s, \beta_q^s=0\}.
\end{align*}
Finally, we define
$$\rm{Jord}_3 = (\rm{Jord}_1 \bs \rm{Jord}_2) \cup \rm{Jord}_I \cup \rm{Jord}_{J_1} \cup \rm{Jord}_{J_2} \cup \rm{Jord}_{S_1} \cup \rm{Jord}_{S_2}.$$ By Theorem \ref{thm9}, corresponding to data $\rm{Jord}_3$ and $\sigma_{sn}$, there is an irreducible negative unramified presentation $\sigma_{neg}$.

Let
\begin{align*}
\textbf{e}_I & = \{(\chi_q^i, 2b_i, \beta_q^i), i \in I, 1 \leq q \leq a_i, \beta_q^i>0\},\\
\textbf{e}_{J_1} & = \{(\chi_q^j, 2b_j+1, \beta_q^j), j \in J_1, 1 \leq q \leq a_j, \beta_q^j>0\},\\
\textbf{e}_{J_2} & = \{(\chi_q^j, 2b_j+1, \beta_q^j), j \in J_2, 1 \leq q \leq a_j-1, \beta_q^j>0\},\\
\textbf{e}_{S_1} & = \{(\chi_q^s, 2b_s+1, \beta_q^s), s \in S_1, 1 \leq q \leq a_s, \beta_q^s>0\},\\
\textbf{e}_{S_2} & = \{(\chi_q^s, 2b_s+1, \beta_q^s), s \in S_2, 1 \leq q \leq a_s, \beta_q^s>0\}.
\end{align*}
Then we define
\begin{equation}\label{sete}
\textbf{e}=\textbf{e}_I \cup \textbf{e}_{J_1}\cup \textbf{e}_{J_2}
\cup \textbf{e}_{S_1} \cup \textbf{e}_{S_2}.
\end{equation}

Since the unramified component $\pi_v$ is unitary, we must have that
$(\textbf{e}, \sigma_{neg}) \in \CM^{u,unr}(n)$, and $\pi_v$ is exactly the irreducible unramified unitary representation $\sigma$ of $\Sp_{2n}(F_v)$ which corresponds to $(\textbf{e}, \sigma_{neg})$ as in Theorem \ref{thm10}.

\begin{rmk}
(1) If $\sigma$ is an irreducible unramified unitary representation of $\Sp_{2n}(F_v)$ corresponding to the pair $(\textbf{e}, \sigma_{neg}) \in \CM^{u,unr}(n)$, then the orbit $\check{\CO}$ corresponding to $\sigma$ in \cite{Bar10} is given by the following partition:
$$[(\prod_{j=1}^t n_j^2) (\prod_{(\chi,m,\alpha) \in \textbf{e}} m^2) (\prod_{i=1}^k (2n_i+1))(\prod_{i=1}^l (2m_i+1))].$$

(2) In Section 6.2, we will show that given an Arthur parameter $\psi$, there are infinitely many finite local
places $v$ such that $\psi_v$ are unramified and the central characters of $\tau_{i,v}$ are trivial. It follows that for any $\pi \in \wt{\Pi}_{\psi}(\epsilon_{\psi})$, there is such a finite local place $v$, such that $\pi_v$ is an irreducible unramified unitary representation of \textbf{Type III} as in \ref{typeIII}.
This is a key step in the proof of Theorem \ref{main}.

For such $\pi_v$ as in \ref{typeIII}, the orbit $\check{\CO}$ corresponding to $\sigma$ in \cite{Bar10} is given by the following partition:
$$[(\prod_{j=1}^t n_j^2) (\prod_{(\chi,m,\alpha) \in \textbf{e}} m^2) (\prod_{i=1}^l (2m_i+1))],$$
which actually turns out to be $\ul{p}(\psi)$.
Then, we will show that $\pi$ has no non-zero Fourier coefficients attached to any symplectic partition $\ul{p}$ which is bigger than
the Barbasch-Vogan duality partition $\eta_{{\frak{g}^\vee,\frak{g}}}(\ul{p}(\psi))$. This proves Theorem \ref{main}.
\end{rmk}


\section{Vanishing of Certain Twisted Jacquet Modules}

For an irreducible automorphic representation $\pi$ of $\Sp_{2n}(\BA)$, we write $\pi=\otimes_v\pi_v$, the restricted tensor product
decomposition. The (global) Fourier coefficients on $\pi$ induce the corresponding (local) twisted Jacquet modules of $\pi_v$ for
each local place $v$ of $F$. It is clear that if $\pi_v$ has no nonzero such twisted Jacquet modules at one local place $v$,
then $\pi$ has no nonzero corresponding (global) Fourier coefficients. We consider the local twisted Jacquet modules at
an unramified local place of $\pi$, the structure of which implies the vanishing property of the corresponding Fourier coefficients.

To simplify notation, we denote, in this section,
by $\pi$ for an irreducible admissible representation of $G_n(F_v)$, where $v$ is a finite place of $F$.

Recall from \cite{JL15a} that
given any symplectic partition $\ul{p}$ of $2n$ and a datum $\ul{a}$, there is a unipotent subgroup $V_{\ul{p},2}$ and a character $\psi_{\ul{p},\ul{a}}$.
Given any irreducible admisslbe representation $\pi$ of $G_n(F_v)$, let
$J_{V_{\ul{p},2}, \psi_{\ul{p},\ul{a}}}(\pi)$ be the twisted Jacquet module of $\pi$
with respect to the unipotent subgroup $V_{\ul{p},2}$ and the character $\psi_{\ul{p},\ul{a}}$.

In principle, we are mainly interested in irreducible unramified unitary representations which are described in Section 2.1.
However, in this section, we consider
the following more general induced
representations of $G_n(F_v)$:
\begin{equation}\label{sec4equ1}
\pi = \Ind_{P_{m_1, \ldots, m_k}(F_v)}^{G_n(F_v)} \mu_1({\det}_{m_1}) \otimes \cdots \otimes
\mu_k({\det}_{m_k}) \otimes 1_{G_{m_0}},
\end{equation}
where $m_0 = n-\sum_{i=1}^k m_i \geq 0$,
$P_{m_1, \ldots, m_k}$ is a standard parabolic subgroup of $G_n$ with
Levi subgroup isomorphic to $\GL_{m_1} \times \cdots \GL_{m_k} \times G_{m_0}$
and $\mu_i$'s are quasi-characters of $F_v^*$.

We prove the following vanishing properties of certain
twisted Jacquet modules of $\pi$.

\begin{lem}\label{lemv1}
For $\pi$ as in \eqref{sec4equ1}, the following statements hold.
\begin{enumerate}
\item[(1)] $J_{{\psi_{r-1}^{\alpha}}}(\pi):=J_{V_{[(2r)1^{2n-2r}],2}, \psi_{[(2r)1^{2n-2r}], \alpha}}(\pi) \equiv 0$,
for any square class $\alpha \in F_v^*/(F_v^*)^2$ and any $r \geq k+1$.
\item[(2)] $J_{\psi_{(2r+1)^2}}(\pi):=J_{V_{[(2r+1)^2 1^{2n-4r-2}],2}, \psi_{[(2r+1)^2 1^{2n-4r-2}]}}(\pi) \equiv 0$,
\begin{itemize}
\item for any $r \geq k$ if $m_0 =0$, or, if $m_0 > 0$ and $m_i = 1$ for some $1 \leq i \leq k$, assuming that $2(2k+1) \leq 2n$;
\item for any $r \geq k+1$ if $m_0 >0$ and $m_i > 1$ for any $1 \leq i \leq k$, assuming that $2(2k+3) \leq 2n$.
\end{itemize}
\end{enumerate}
\end{lem}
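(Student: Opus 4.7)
The plan is to identify each twisted Jacquet module with a Whittaker-type functional on $\pi$, and then apply the Bernstein-Zelevinsky geometric lemma to the inducing data of \eqref{sec4equ1} to count when such a functional can be supported.

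For part (1), the key geometric fact (from Section 2 of \cite{JL15a}) is that $V_{[(2r)1^{2n-2r}],2}$ is contained in the unipotent radical of a Bessel-type parabolic $Q \subset G_n$ with a Levi factor isomorphic to $\GL_r \times G_{n-r}$, and $\psi_{[(2r)1^{2n-2r}], \alpha}$ restricts to a generic Whittaker character on the upper-triangular unipotent of this $\GL_r$ together with a quadratic character (determined by the class $\alpha$) on one extra root. By a standard filtration, $J_{\psi_{r-1}^\alpha}(\pi)$ is a subquotient of the $\GL_r$-Whittaker module of the Jacquet module of $\pi$ along $Q$. Applying the geometric lemma to the Jacquet functor on \eqref{sec4equ1}, one obtains a filtration whose successive quotients are indexed by the double cosets $P_{m_1,\ldots,m_k} \bs G_n / Q$; each such piece is built from Jacquet modules of the characters $\mu_i(\det_{m_i})$ together with a contribution from $1_{G_{m_0}}$. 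Since $\mu_i(\det_{m_i})$ supports a $\GL_{m_i}$-Whittaker functional only when $m_i = 1$, and $1_{G_{m_0}}$ can contribute at most one further Whittaker ``slot'', a $\GL_r$-Whittaker can be assembled from at most $k$ pieces; hence $r \geq k+1$ forces the desired vanishing.

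For part (2), the partition $[(2r+1)^2\, 1^{2n-4r-2}]$ yields a Fourier coefficient attached to the unipotent radical of a parabolic $Q' \subset G_n$ whose Levi carries two $\GL_r$-factors, and $\psi_{[(2r+1)^2 1^{2n-4r-2}]}$ restricts to a pair of Whittaker characters on these two $\GL_r$-factors, linked by a pairing on the extra roots coming from the two $(2r+1)$-blocks. The same strategy applies, but now two $\GL_r$-Whittaker slots must be filled simultaneously. A case analysis on the Levi data shows that when $m_0 = 0$ or some $m_i = 1$, an additional absorption is available (either by $1_{G_{m_0}}$ or by a $\GL_1$-block of the inducing data), yielding the sharper threshold $r \geq k$; when $m_0 > 0$ and all $m_i > 1$, no such absorption is available, so the threshold degrades to $r \geq k+1$. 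The numerical conditions $2(2k+1) \leq 2n$ and $2(2k+3) \leq 2n$ are the natural existence conditions ensuring that $[(2k+1)^2 1^\bullet]$ and $[(2k+3)^2 1^\bullet]$ are genuine symplectic partitions of $2n$.

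The main obstacle lies in the combinatorial bookkeeping for part (2): the two identical odd parts in the partition induce a coupling between the two $\GL_r$-Whittaker conditions through the pairing term, and a root-exchange argument in the spirit of Section 7 of \cite{GRS11} will be required to disentangle this coupling before the orbit count becomes transparent. Once the coupling has been untangled, the remainder of the proof reduces to the Whittaker-slot-counting already outlined in part (1).
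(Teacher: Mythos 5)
Your overall strategy---a double coset decomposition followed by a combinatorial count---is essentially the same as the paper's, which works directly with the generalized Bruhat decomposition of $P_{m_1,\ldots,m_k}\backslash G_n/V_{[(2r)1^{2n-2r}],2}$ and shows that no double coset is admissible. But your count for Part~(1) is internally inconsistent and omits the step that makes the bound tight. You assert that $1_{G_{m_0}}$ ``can contribute at most one further Whittaker slot'' and simultaneously that ``a $\GL_r$-Whittaker can be assembled from at most $k$ pieces''; if $1_{G_{m_0}}$ really did contribute a slot, your count would give $k+1$, hence vanishing only for $r\ge k+2$, which is weaker than the lemma. The point is that $\psi_{[(2r)1^{2n-2r}],\alpha}$ is nontrivial not only on the $\GL_r$-Whittaker roots $e_i-e_{i+1}$ but also on the long root $2e_r$; after conjugating by the Weyl representative $\omega$, nontriviality on $2e_r$ forces $\omega(e_r)=-e_{i_r}$ with $i_r\le\sum m_i$, and since $i_1<\cdots<i_r$ all the $i_s$ land in the $\GL$-blocks, so $G_{m_0}$ contributes no slot at all. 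You name this extra root but never use it. A second, smaller gap: the assertion that each $\GL_{m_i}$ block can provide at most one slot needs an argument. In the paper this comes from the fact that the index blocks $I_j$ are consecutive integer intervals, so the chunk indices $j_s$ are nondecreasing, and if $j_s=j_{s+1}$ then $\omega X_{e_s-e_{s+1}}\omega^{-1}$ lies in the Levi $\GL_{m_{j_s}}$ and the nontrivial character annihilates the term; hence $j_1<j_2<\cdots<j_r$ strictly, and $r\le k$.

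For Part~(2), the paper conjugates $V_{[(2r+1)^21^{2n-4r-2}],2}$ to the unipotent radical $U$ of a parabolic with Levi $\GL_2\times\cdots\times\GL_2\times G_{n-2r}$ ($r$ copies of $\GL_2$), not to one with two $\GL_r$-factors as you suggest, and then performs a chunk count on $2r+1$ indices against $2k$ chunks when $m_0=0$ or $2k+1$ chunks when $m_0>0$. The threshold split ($r\ge k$ versus $r\ge k+1$) is decided by a small pigeonhole argument on the sizes $\#I_s$ using the extra chunk supplied by $G_{m_0}$ and the hypothesis on whether some $m_i=1$; there is no root-exchange step of the kind you anticipate. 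Your sketch leaves all of this unargued, and the $\GL_r\times\GL_r$ framing, while not impossible in principle, would need to be reconciled with the $\GL_2^r$ structure to reproduce these thresholds.
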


\begin{proof}
The idea of the proof of Part (1) is similar to that of Key Lemma 3.3 of \cite{GRS05}.

By the adjoint relation between parabolic induction and the twisted Jacquet module, we consider
$P_{m_1, \ldots, m_k} \bs G_n / V_{[(2r)1^{2n-2r}],2}$, the double coset
decomposition of $G_n$. Using the generalized Bruhat decomposition, the representatives of these
double cosets can be chosen to be
elements of the following form:
$
\gamma = \omega u_{\omega},
$
with $\omega \in W(P_{m_1, \ldots, m_k}) \bs W(G_n)$, where
$W(G_n)$ is the Weyl group of $G_n$ and
$W(P_{m_1, \ldots, m_k})$ is the Weyl group of $P_{m_1, \ldots, m_k}$, and with
$u_{\omega}$ being in the standard maximal unipotent subgroup of $\Sp_{2n-2r+2}$,
which is embedded in $G_n$ as $I_{r-1} \times \Sp_{2n-2r+2}$ in the Levi subgroup $\GL_{r-1}\times\Sp_{2n-2r+2}$.
We identify $u_{\omega}$ with its embedding image.

We show that there is no admissible double coset,
i.e., for any representative $\gamma = \omega u_{\omega}$,
there exists $v \in V_{[(2r)1^{2n-2r}],2}$, such that
$\gamma v \gamma^{-1} \in P_{m_1, \ldots, m_k}$, but
$\psi_{[(2r)1^{2n-2r}], \alpha}(v) \neq 1$.

Let $\alpha_i = e_i - e_{i+1}$, for $i=1,2,\ldots,r-1$,
and $\alpha_{r} = e_{r}+e_{r}$ be some positive roots.
By definition, $\psi_{[(2r)1^{2n-2r}], \alpha}$
is non-trivial on the corresponding one-dimensional root subgroup $X_{\alpha_i}$
for $i=1,2,\ldots, r$, but is trivial on the root subgroup
corresponding to any other positive root.
Hence it is enough to show that for any representative $\gamma$,
there is at least one $1 \leq i \leq r$,
such that $\gamma X_{\alpha_i}(x) \gamma^{-1} \in P_{m_1, \ldots, m_k}$.

Note that
$\omega u_{\omega} X_{\alpha_i}(x) (\omega u_{\omega})^{-1} =
\omega (u_{\omega} X_{\alpha_i}(x) u_{\omega}^{-1}) \omega^{-1} = \omega X_{\alpha_i}(x) \omega^{-1}$
for any $i \neq r-1$. For $i = r-1$, $u = u_{\omega}^{-1} X_{\alpha_{r}}(x) u_{\omega} \in V_{[(2r)1^{2n-2r}],2}$,
and $\omega u_{\omega} u (\omega u_{\omega})^{-1} =
\omega (u_{\omega} u u_{\omega}^{-1}) \omega^{-1} = \omega X_{\alpha_{r}}(x) \omega^{-1}$.
Therefore, it remains to show that for any Weyl element $\omega \in W(P_{m_1, \ldots, m_k}) \bs W(G_n)$,
there is at least one $1 \leq i \leq r$,
such that $\omega X_{\alpha_i}(x) \omega^{-1} \in P_{m_1, \ldots, m_k}$.

Let $N_{m_1, \ldots, m_k}$ be the unipotent radical of $P_{m_1, \ldots, m_k}$, and
$\ol{N}_{m_1, \ldots, m_k}$ be its opposite.
Assume that there is an $\omega \in W(P_{m_1, \ldots, m_k}) \bs W(G_n)$, such that
for any $1 \leq i \leq r$,
$\omega X_{\alpha_i}(x) \omega^{-1} \in \ol{N}_{m_1, \ldots, m_k}$. This will lead us to a contradiction.

We separate the numbers $\{1, \ldots, \sum_{i=1}^{k}m_i\}$ into the following chunks
of indices:
$I_j = \{\sum_{i=1}^{j-1}m_i+1, \sum_{i=1}^{j-1}m_i+2, \ldots, \sum_{i=1}^{j}m_i\}$, for $1 \leq j \leq k$.
By assumption, $\omega X_{\alpha_i}(x) \omega^{-1} \in \ol{N}_{m_1, \ldots, m_k}$ for any
$1 \leq i \leq r$, where $\alpha_i = e_i - e_{i+1}$ if $i=1,2,\ldots,r-1$, and
$\alpha_{r} = e_{r}+e_{r}$.
There must exist a sequence of numbers
$1 \leq i_1 < i_2 < \cdots < i_{r-1} <i_{r} \leq n$, such that
$\omega(e_{s}) = -e_{i_s}$, for $s = 1, 2, \ldots, r$.

We assume that $i_s \in I_{j_s}$ for $s = 1, 2, \ldots, r$.
We claim that $j_1 < j_2 < \cdots < j_{r}$. Indeed,
we have $j_1 \leq j_2 \leq \cdots \leq j_{r}$.
If $j_s = j_{s+1}$ for some $s\in\{1, 2, \ldots, r-1\}$,
then
$$
\omega X_{\alpha_s} \omega^{-1} = \omega X_{e_s-e_{s+1}} \omega^{-1} = X_{e_{i_{s+1}}-e_{i_s}} \subset P_{m_1, \ldots, m_k},
$$
which is a contradiction. This justifies the claim. On the other hand,
the condition that $j_1 < j_2 < \cdots < j_{r}$ will lead to
a contradiction, since we just have $k$ different chunks of indices,
and $r \geq k+1$.

This completes the proof of Part (1).

Next, we prove Part (2). For the partition $[(2r+1)^2 1^{2n-4r-2}]$, the corresponding
one-dimensional toric subgroup $\CH_{[(2r+1)^2 1^{2n-4r-2}]}$ consists of
elements as follows
\begin{equation}\label{sec4equ4}
\diag(t_1^{2r}, t_1^{2r-2}, \ldots, t_1^{-2r}; I_{2n-4r-2}, t_2^{2r}, t_2^{2r-2}, \ldots, t_2^{-2r}).
\end{equation}
Note that here actually $t_1 = t_2 = t$, we just
label them to distinguish their positions.

Let $\omega_1$ be a Weyl element sending the one-dimensional toric subgroup $\CH_{[(2r+1)^2 1^{2n-4r-2}]}$ to the following one-dimensional toric subgroup
\begin{equation}\label{sec4equ5}
\{\diag(T; I_{n-2r-1}; t_1^0, t_2^0; I_{n-2r-1}, T^*)\},
\end{equation}
where $T=\diag(t_1^{2r}, t_2^{2r}; t_1^{2r-2}, t_2^{2r-2}; \ldots, t_1^{2}, t_2^{2})$.
Then it is easy to see that $J_{\psi_{(2r+1)^2}}(\pi) \neq 0$ if and only if $\pi$ has
a non-zero twisted Jacquet module with respect to
$U := \omega_1 V_{[(2r+1)^2 1^{2n-4r-2}],2} \omega_1$
and $\psi_{U}$, which is defined by
$$\psi_U(u):=\psi_{[(2r+1)^2 1^{2n-4r-2}]}(\omega_1^{-1} u \omega_1).$$
Hence we have to show that
$J_{U, \psi_U}(\pi)=0$.
Note that $U$ is actually the unipotent radical of the parabolic
subgroup with Levi $M$ isomorphic to
$\GL_2 \times \cdots \times \GL_2 \times G_{n-2r}$ (with $r$-copies of $\GL_2$).

As in the proof of Part (1), we consider the double coset
decomposition
$P_{m_1, \ldots, m_k} \bs G_n / U$.
By Bruhat decomposition, the representatives of these
double cosets can be chosen to be
elements of the following form: $\gamma = \omega u_{\omega}$,
with $\omega \in W(P_{m_1, \ldots, m_k}) \bs W(G_n)$ and
$u_{\omega}$ in the standard maximal unipotent subgroup of $M$.
We will show that there is no admissible double coset,
i.e., for any representative $\gamma = \omega u_{\omega}$,
there exists $u \in U$, such that
$\gamma u \gamma^{-1} \in P_{m_1, \ldots, m_k}$, but
$\psi_{U}(u) \neq 1$.

Define for now that $\alpha_i = \omega_1(e_i-e_{i+1})$ for $1 \leq i \leq 2r$.
We show that for any representative $\gamma$,
there is at least one $1 \leq i \leq 2r$
such that $\gamma X_{\alpha_i}(x) \gamma^{-1} \in P_{m_1, \ldots, m_k}$.

Note that for any $u_{\omega} \in M$ and any $1 \leq i \leq 2r$,
$u = u_{\omega}^{-1} X_{\alpha_i}(x) u_{\omega} \in U$ since
$M$ normalizes $U$. Then
$\omega u_{\omega} u (\omega u_{\omega})^{-1} = \omega X_{\alpha_i}(x) \omega^{-1}$.
Hence we just have to show that for an $\omega \in W(P_{m_1, \ldots, m_k}) \bs W(G_n)$,
there is at least one $1 \leq i \leq 2r$,
such that $\omega X_{\alpha_i}(x) \omega^{-1} \in P_{m_1, \ldots, m_k}$.
As in part (1), we prove this by contradiction, assuming that there is an
$\omega \in W(P_{m_1, \ldots, m_k}) \bs W(G_n)$, such that
for any $1 \leq i \leq 2r$,
$\omega X_{\alpha_i}(x) \omega^{-1} \in \ol{N}_{m_1, \ldots, m_k}$.

If $m_0=0$, we separate the numbers $\{1, \ldots, n, -n, -n+1, \ldots, -1\}$ into the following $2k$ chunks
of indices:
$I_j = \{\sum_{i=1}^{j-1}m_i+1, \sum_{i=1}^{j-1}m_i+2, \ldots, \sum_{i=1}^{j}m_i\}$, if $1 \leq j \leq k$;
and $I_j = \{-\sum_{i=1}^{2k-j+1} m_i, -\sum_{i=1}^{2k-j+1} m_i+1, \ldots,
-\sum_{i=1}^{2k-j} m_i-1\}$,
if $k+1 \leq j \leq 2k$.

If $m_0>0$, we separate the numbers $\{1, \ldots, n, -n, -n+1, \ldots, -1\}$ into the following $2k+1$ chunks
of indices:
$I_j = \{\sum_{i=1}^{j-1}m_i+1, \sum_{i=1}^{j-1}m_i+2, \ldots, \sum_{i=1}^{j}m_i\}$, if $1 \leq j \leq k$;
$$I_{k+1} = \{\sum_{i=1}^{k}m_i+1, \sum_{i=1}^{k}m_i+2, \ldots, n, -n, -n+1, \ldots, -\sum_{i=1}^{k}m_i-1\};$$
and $I_{j+1} = \{-\sum_{i=1}^{2k-j+1} m_i, -\sum_{i=1}^{2k-j+1} m_i+1, \ldots,
-\sum_{i=1}^{2k-j} m_i-1\}$,
if $k+1 \leq j \leq 2k$.

By assumption, $\omega X_{\alpha_i}(x) \omega^{-1} \in \ol{N}_{m_1, \ldots, m_k}$ for any
$1 \leq i \leq 2r$, where $\alpha_i = \omega_1(e_i - e_{i+1})$ with $i=1,2,\ldots,2r$.
There must exist a sequence of numbers
$\{i_1, i_2, \ldots, i_{2r+1}\}$ with $i_s \in I_{j_s}$
and $j_1 \leq j_2 \leq \cdots \leq j_{2r+1}$,
such that
$\omega(\omega_1(e_{s})) = f_{i_{2r+2-s}}$ for $s = 1, 2, \ldots, 2r+1$,
where  $f_t:=e_t$, if $t>0$,
and $f_t := -e_{-t}$, if $t<0$.
Using similar augments as in the proof of Part (1), we have that
$j_1 < j_2 < \cdots < j_{2r+1}$.


Assuming that $2(2k+1)\leq 2n$.
If $m_0=0$ and $r \geq k$, then
the condition that $j_1 < j_2 < \cdots < j_{2r+1}$ will lead to
a contradiction, since we just have $2k$ different chunks of indices.
If $m_0 >0$ and $m_i=1$ for some $1 \leq i \leq k$, and $r \geq k+1$, then the condition that $j_1 < j_2 < \cdots < j_{2r+1}$ will also lead to
a contradiction, since we just have $2k+1$ different chunks of indices.
If $m_0 >0$ and $m_i=1$ for some $1 \leq i \leq k$, and $r=k$, then $j_s=s$, that is $i_s \in I_s$, $1 \leq s \leq 2k+1$. This easily implies that $\#(I_s) \geq 2$ for any $1 \leq s \leq 2k+1$, that is, $m_s \geq 2$ for any $1 \leq s \leq 2k+1$. This is a contradiction since $m_i=1$ for some $1 \leq i \leq k$.

Assuming that $2(2k+3)\leq 2n$.
If $m_0>0$ and $m_i >1$ for any $1 \leq i \leq k$, and $r \geq k+1$, then
the condition that $j_1 < j_2 < \cdots < j_{2r+1}$ will also lead to
a contradiction, since  we just have $2k+1$ different chunks of indices.

This completes the proof of part (2) and hence the proof the lemma.
\end{proof}

Lemma \ref{lemv1} is also true
for the double cover of $\Sp_{2n}(F_v)$, with exactly the same proof. We state the result as follows with proof omitted.

Let
\begin{equation}\label{sec4equ2}
\wt{\pi} = \Ind_{\wt{P}_{m_1, \ldots, m_k}(F_v)}^{\wt{\Sp}_{2n}(F_v)} \mu_{\psi} \mu_1({\det}_{m_1}) \otimes \cdots \otimes
\mu_k({\det}_{m_k}) \otimes 1_{\wt{\Sp}_{2m_0}(F_v)},
\end{equation}
where $m_0 = n-\sum_{i=1}^k m_i \geq 0$,
$\wt{P}_{m_1, \ldots, m_k}(F_v)$ is the pre-image of the parabolic $P_{m_1, \ldots, m_k}(F_v)$
in $\wt{\Sp}_{2n}(F_v)$,
$\mu_i$'s are quasi-characters of $F_v^*$, and $\mu_{\psi}$ is defined as in (6.1) of \cite{GRS11}.

\begin{lem}\label{lemv2}
For $\wt{\pi}$ as in \eqref{sec4equ2}, the followings hold.
\begin{enumerate}
\item[(1)] $J_{\psi_{r-1}^{\alpha}}(\wt{\pi}):=J_{V_{[(2r)1^{2n-2r}],2}, \psi_{[(2r)1^{2n-2r}], \alpha}}(\wt{\pi}) \equiv 0$,
for any square class $\alpha \in F_v^*/(F_v^*)^2$ and any $r \geq k+1$.
\item[(2)] $J_{\psi_{(2r+1)^2}}(\wt{\pi}):=J_{V_{[(2r+1)^2 1^{2n-4r-2}],2}, \psi_{[(2r+1)^2 1^{2n-4r-2}]}}(\wt{\pi}) \equiv 0$,
\begin{itemize}
\item for any $r \geq k$ if $m_0 =0$, or, if $m_0 > 0$ and $m_i = 1$ for some $1 \leq i \leq k$, assuming that $2(2k+1) \leq 2n$;
\item for any $r \geq k+1$ if $m_0 >0$ and $m_i > 1$ for any $1 \leq i \leq k$, assuming that $2(2k+3) \leq 2n$.
\end{itemize}
\end{enumerate}
\end{lem}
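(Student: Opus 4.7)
The plan is to replicate the proof of Lemma \ref{lemv1} on the metaplectic cover, exploiting the fact that the unipotent radicals and the Fourier-coefficient unipotents $V_{\underline{p},2}$ are subgroups over which the double cover $\wt{\Sp}_{2n}(F_v)\to \Sp_{2n}(F_v)$ splits canonically. In particular, the character $\psi_{\underline{p},\underline{a}}$ pulls back unambiguously to a character of the preimage of $V_{\underline{p},2}(F_v)$, and the twisted Jacquet module $J_{V_{\underline{p},2},\psi_{\underline{p},\underline{a}}}(\wt{\pi})$ is computed exactly as in the linear case by the geometric lemma (Bernstein-Zelevinsky / Mackey theory), applied now to the cover.

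Concretely, I would first note that $\wt{P}_{m_1,\ldots,m_k}\backslash \wt{\Sp}_{2n}/V_{[(2r)1^{2n-2r}],2}$ is in bijection with $P_{m_1,\ldots,m_k}\backslash \Sp_{2n}/V_{[(2r)1^{2n-2r}],2}$, since the double cover is trivial over $V_{[(2r)1^{2n-2r}],2}$ and any set-theoretic section of $\Sp_{2n}$ to $\wt{\Sp}_{2n}$ may be used to choose representatives of the form $\tilde{\gamma}=\tilde{\omega}\tilde{u}_\omega$ with $\tilde{\omega}$ a fixed lift of a Weyl element and $\tilde{u}_\omega$ a lift of an element $u_\omega$ in the standard maximal unipotent of the appropriate smaller symplectic factor. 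Exactly as in the proof of Lemma \ref{lemv1}, for an admissible double coset one would need, for every $1\leq i\leq r$ (respectively $1\leq i\leq 2r$ for Part (2)), the conjugate $\tilde{\omega} X_{\alpha_i}(x)\tilde{\omega}^{-1}$ to lie in $\wt{P}_{m_1,\ldots,m_k}$. Since the cover is split over the standard maximal unipotent and its opposite, this reduces verbatim to the condition on the image Weyl element $\omega$ in $W(P_{m_1,\ldots,m_k})\backslash W(G_n)$, and the combinatorial contradiction in terms of the $2k$ (or $2k+1$) chunks $I_j$ of indices is derived exactly as in Lemma \ref{lemv1}.

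For Part (2), the same Weyl twist $\omega_1$ is applied; note that since $\omega_1$ permutes roots without leaving the span of the simple reflections through which the cocycle is trivialized on the appropriate root subgroups, the translated character $\psi_U$ still descends from a character of the corresponding unipotent subgroup of $\wt{\Sp}_{2n}(F_v)$, and the proof proceeds as before by contradiction from $j_1<j_2<\cdots<j_{2r+1}$ exceeding the available number of chunks.

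The only genuinely metaplectic ingredient is the factor $\mu_\psi$ in the definition of $\wt{\pi}$, and the cocycle defining the cover. The potential obstacle would be a hidden sign or cocycle twist interfering with the identification of Weyl representatives or with the character computation, but because the vanishing argument only asserts that certain conjugates of root subgroups land in the opposite unipotent radical $\ol{N}_{m_1,\ldots,m_k}$—a purely geometric statement about the image in $\Sp_{2n}(F_v)$—the cocycle and the factor $\mu_\psi$ play no role. Thus the argument transfers without modification, and the same bounds on $r$ and the same hypotheses on $m_0$ and the $m_i$ produce the stated vanishing of $J_{\psi_{r-1}^\alpha}(\wt{\pi})$ and $J_{\psi_{(2r+1)^2}}(\wt{\pi})$.
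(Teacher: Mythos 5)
Your proposal is correct and matches the paper's approach exactly: the authors simply state that Lemma \ref{lemv1} "is also true for the double cover of $\Sp_{2n}(F_v)$, with exactly the same proof," and omit the details. Your explanation of why the cover splits over the relevant unipotent subgroups, and why the cocycle and the factor $\mu_\psi$ play no role in the purely combinatorial Weyl-group argument, is precisely the justification the authors have in mind.
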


\begin{rmk}
When $\alpha=1$ and $m_0=0$,  Part (1) of Lemmas \ref{lemv1} and \ref{lemv2} have already been
proved in Theorem 6.3 of \cite{GRS11}.
\end{rmk}


\section{Vanishing of Certain Fourier Coefficients: Strongly Negative Case}

In this and next sections, we  characterize the vanishing property of Fourier coefficients
for certain irreducible automorphic representations $\pi$, based on the information
of $\pi_v$, where $v$ is any finite place of $F$ such that $\pi_v$ is unramified.

First, we recall some definitions and results from \cite{CM93}.
A symplectic partition is called {\bf special}
if it has an even number of even parts between any two consecutive
odd ones and an even number of even parts greater than the largest odd part.
By Theorem 2.1 of \cite{GRS03} and Corollary 4.2 of \cite{JL15a},
for an irreducible automorphic representation $\pi$ of $\Sp_{2n}(\BA)$, $\frak{p}^m(\pi)$ consists of special symplectic partitions of $2n$.

Given a partition $\ul{p}$ of $2n$, which is not necessarily symplectic,
the unique largest symplectic
partition which is smaller than $\ul{p}$ is called {\bf the $G$-collapse}
of $\ul{p}$, and is denoted by $\ul{p}_G$ (note that $G=\Sp$).
In general, $\ul{p}_G$ may not be special.
Given a symplectic partition $\ul{p}$ of $2n$,
which is not necessarily special, the smallest
special symplectic partition which is greater than $\ul{p}$
is called {\bf the $G$-expansion} of $\ul{p}$, and is denoted
by $\ul{p}^G$.

Theorem 6.3.8 of \cite{CM93} gives a recipe for passing from
a partition $\ul{p}$ to its $G$-collapse.
Explicitly, given a partition $\ul{p}$ of $2n$, then is
automatically has an even number of
odd parts, but each odd part may not have an even
multiplicity, that is, $\ul{p}$ may not be
symplectic. Assume that its odd parts are
$p_{1} \geq \cdots \geq p_{2r}$,
with multiplicities. Enumerate the indices $i$
with $p_{2i-1} > p_{2i}$ as $i_1 < \cdots < i_t$.
Then, the $G$-collapse of $\ul{p}$ can be obtained by
replacing each pair of parts
$(p_{2i_j-1}, p_{2i_j})$ by $(p_{2i_j-1}-1, p_{2i_j}+1)$,
respectively, for $1 \leq j \leq t$, and leaving
the other parts alone.
For example, for the partition $\ul{p}=[5^34^23^221^3]$,
then its odd parts are $5 \geq 5 \geq 5 >3 \geq 3>1\geq 1 \geq 1$,
and $5 > 3$, $3 > 1$ are two pairs in the series of its odd parts
which are not equal. Then, $\ul{p}_G = [5^2 4^4 2^3 1^2]$,
which is exactly obtained by replacing the pair $(5,3)$ by $(4,4)$,
$(3,1)$ by $(2,2)$, and leaving the other parts alone.

Theorem 6.3.9 of \cite{CM93} gives a recipe for passing from
a symplectic partition $\ul{p}$ to its $G$-expansion.
Explicitly, given a symplectic partition $\ul{p}$ of $2n$,
then by definition, each of its odd parts occurs with even
multiplicity. Assume that $\ul{p}=[p_1 p_2 \cdots p_r]$,
with $p_1 \geq p_2 \geq \cdots \geq p_r >0$. Enumerate the
indices $i$ such that $p_{2i} = p_{2i+1}$ is odd and
$p_{2i-1} \neq p_{2i}$ as $i_1 < \cdots < i_t$. Then
the $G$-expansion of $\ul{p}$ can be obtained by
replacing each pair of parts $(p_{2i_j}, p_{2i_j+1})$
by $(p_{2i_j+1}, p_{2i_j-1})$, respectively,
and leaving the other parts alone.
For example, for the symplectic partition $\ul{p}=[65^243^221^2]$,
which is not special, we have $p_1 \neq p_2 = p_3 =5$,
and $p_7 \neq p_8 = p_9 =1$. Then $\ul{p}^G = [6^2 4^2 3^22^2]$,
which is exactly obtained by replacing the pair $(5, 5)$
by $(6,4)$, $(1,1)$ by $(2,0)$, and leaving the other parts alone.

Following from the definition of Fourier coefficients attached
to composite partitions for the global case in Section 1 of \cite{GRS03},
and also the definition of Fourier-Jacobi module $FJ$ in Section 3.8 of \cite{GRS11},
we can similarly define the Fourier-Jacobi modules
with respect to the composite partitions
like $[(2n_1)1^{2n-2n_1}] \circ [(2n_2)1^{2n-2n_1-2n_2}]$.
Explicitly, given an irreducible admissible representation $\pi$ of
$G_n(F_v)$, we say that $\pi$ has a nonzero Fourier-Jacobi module
with respect to the composite partition
$[(2n_1)1^{2n-2n_1}] \circ [(2n_2)1^{2n-2n_1-2n_2}]$
if the following is nonzero:
first taking the Fourier-Jacobi module $FJ_{\psi_{n_1-1}^{\alpha}} (\pi)$ which is a representation of $\widetilde{G}_{n-n_1}(F_v)$, denoted by $\pi'$,
followed by taking twisted Jacquet module
$J_{\psi_{n_2-1}^{\beta}} (\pi')$,
with $\alpha, \beta \in F_v^* / (F_v^*)^2$.

The following proposition
generalizes Theorem 6.3 of \cite{GRS11}.

\begin{prop}\label{prop1}
The following hold.
\begin{enumerate}
\item Let $\chi_i$, $1 \leq i \leq r$, be characters of $F_v^*$, and $a \in F_v^*$.
Then
 \begin{align}\label{prop1equ1}
\begin{split}
 & FJ_{\psi^a_{k-1}} (\Ind_{P_{m_1, \ldots, m_k}}^{\Sp_{2n}} \nu^{\alpha_1} \chi_1 ({\det}_{{m_1}}) \otimes \cdots \otimes \nu^{\alpha_k} \chi_k({\det}_{{m_k}}))\\
\cong \ & \Ind_{P_{m_1-1, \ldots, m_k-1}}^{\wt{\Sp}_{2n-2k}} \mu_{\psi^{-a}} \nu^{\alpha_1} \chi_1 ({\det}_{{m_1-1}}) \otimes \cdots \otimes \nu^{\alpha_k} \chi_k({\det}_{{m_k-1}}).
\end{split}
\end{align}
\item Let $\chi_i$, $1 \leq i \leq r$, be characters of $F_v^*$, and $a, b \in F_v^*$.
Then
\begin{align}\label{prop1equ2}
\begin{split}
 & FJ_{\psi^b_{k-1}} (\Ind_{P_{m_1, \ldots, m_k}}^{\wt{\Sp}_{2n}} \mu_{\psi^{a}}\nu^{\alpha_1} \chi_1 ({\det}_{{m_1}}) \otimes \cdots \otimes \nu^{\alpha_k} \chi_k({\det}_{{m_k}}))\\
\cong \ & \Ind_{P_{m_1-1, \ldots, m_k-1}}^{{\Sp}_{2n-2k}} \chi_{\frac{b}{a}} \nu^{\alpha_1} \chi_1 ({\det}_{{m_1-1}}) \otimes \cdots \otimes \nu^{\alpha_k} \chi_k({\det}_{{m_k-1}}),
\end{split}
\end{align}
where $\chi_{\frac{b}{a}}$ is a quadratic character of $F_v^*$ defined by the
Hilbert symbol:
$\chi_{\frac{b}{a}}(x)=(\frac{b}{a}, x)$.
\end{enumerate}
\end{prop}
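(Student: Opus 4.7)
The plan is to generalize the proof of Theorem 6.3 of \cite{GRS11}, which established the special case $\alpha_i=0$, $\chi_i$ trivial, $a=b=1$, by combining the Bernstein--Zelevinsky geometric lemma with the vanishing results of the preceding section. First I would realize $FJ_{\psi^a_{k-1}}$ explicitly: it is the composition of the twisted Jacquet functor along $V_{[(2k)1^{2n-2k}],2}$ against $\psi_{[(2k)1^{2n-2k}],a}$, followed by tensoring with a Schr\"odinger model of the Weil representation $\omega_{\psi^{-a}}$ in order to descend a Heisenberg-type quotient to a representation of $\wt{\Sp}_{2(n-k)}$. Let $Q$ denote the standard parabolic of $\Sp_{2n}$ whose unipotent radical contains this unipotent. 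Applying the geometric lemma to $\pi=\Ind_{P}^{\Sp_{2n}}(\cdots)$ with $P=P_{m_1,\ldots,m_k}$ produces a filtration of $FJ_{\psi^a_{k-1}}(\pi)$ indexed by representatives of $P\backslash\Sp_{2n}/Q$, which we may take of the form $\gamma=w u_w$ exactly as in the proof of Lemma~\ref{lemv1}.

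Next, for each non-open double-coset representative $\gamma$, I would exhibit a root vector inside $V_{[(2k)1^{2n-2k}],2}$ on which the character $\psi_{[(2k)1^{2n-2k}],a}$ is nontrivial while its $\gamma$-conjugate lies in $P$; this kills the corresponding contribution. The combinatorial mechanism is the chunks-of-indices argument from Lemma~\ref{lemv1}(1), applied at the boundary case $r=k$, where there is a \emph{unique} admissible Weyl element, namely the one sending $e_s$ into the $s$th chunk $I_s$ for $s=1,\ldots,k$. On this surviving coset, the open Weyl element strips the first column off each $\GL_{m_i}$-block of the Levi of $P$ and pushes it into the $\GL_k$-factor of the Levi of $Q$. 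After performing the Jacquet integration against $\psi_{[(2k)1^{2n-2k}],a}$ and matching modular characters by an elementary root count, what remains is precisely an induced representation of $\wt{\Sp}_{2(n-k)}$ from the parabolic $P_{m_1-1,\ldots,m_k-1}$ with inducing data $\nu^{\alpha_1}\chi_1(\det_{m_1-1})\otimes\cdots\otimes\nu^{\alpha_k}\chi_k(\det_{m_k-1})$, as required.

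Finally I would carry out the Weil-representation bookkeeping. In Part (1) the factor $\omega_{\psi^{-a}}$ acts on the $\wt{\GL}$-portion of the Siegel-type Levi by the genuine character $\mu_{\psi^{-a}}$ (standard Kudla--Rao formula), producing exactly the extra factor in \eqref{prop1equ1}. In Part (2) the same geometric and combinatorial argument applies verbatim; the new ingredient is that the inducing datum already carries $\mu_{\psi^a}$, and combining this with the descent factor $\mu_{\psi^{-b}}$ from $FJ_{\psi^b_{k-1}}$ yields $\mu_{\psi^a}\cdot\mu_{\psi^{-b}}$, which restricted to $\wt{\GL}_m$ descends to the non-genuine quadratic character $g\mapsto(b/a,\det g)=\chi_{b/a}(\det g)$ via the standard identity $\mu_{\psi^c}(g)\mu_{\psi^d}(g)^{-1}=(c/d,\det g)$. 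This gives \eqref{prop1equ2}.

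The main obstacle I anticipate is the careful verification of the metaplectic/Weil identity in Part (2) with the correct normalization of the Hilbert symbol, so that $\chi_{b/a}$ rather than $\chi_{a/b}$ appears; a secondary source of bookkeeping complexity is tracking the unramified twists $\nu^{\alpha_i}$ through conjugation by the open Weyl element and checking that they are preserved as displayed. The vanishing of every non-open orbit, by contrast, is purely combinatorial and is essentially already in hand via the argument of Lemma~\ref{lemv1}.
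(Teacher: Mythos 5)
Your proposal is correct and follows essentially the same route as the paper: the authors simply cite the proof of Theorem~6.3 (and the key computation of Proposition~6.6) of \cite{GRS11} and record only the Weil-factor identity $\gamma_{\psi^a}\gamma_{\psi^{-b}}=\gamma_{\psi^a}\gamma_{\psi^{-a}}\chi_{b/a}=\chi_{b/a}$ from \cite[p.~17]{Kud96}, which is exactly the metaplectic bookkeeping you describe at the end. You spell out the geometric-lemma and open-orbit mechanism that the paper leaves implicit by reference, but the underlying argument is identical.
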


\begin{proof}
The proof is the same as Theorem 6.3 of \cite{GRS11}.
The key calculation is reduced to that in Proposition 6.6 of \cite{GRS11}.
Explicitly, by \cite[Page 17]{Kud96},
$$
\gamma_{\psi^{a}} \gamma_{\psi^{-b}}
=  \gamma_{\psi^{a}} \gamma_{\psi^{-a}}\chi_{\frac{b}{a}}
= \chi_{\frac{b}{a}}.
$$
\end{proof}

The following proposition can be easily read out from the
Theorem 6.1, Proposition
6.7 and Theorem 6.3 of \cite{GRS11}.

\begin{prop}\label{prop2}
Let $\chi_i$, $1 \leq i \leq r$, be characters of $F_v^*$, and $a \in F_v^*$. Then
\begin{align}\label{prop2equ1}
\begin{split}
 & FJ_{\psi_{k-1}^{a}} (\Ind_{P_{m_1, \ldots, m_k}}^{\Sp_{2n}} \nu^{\alpha_1} \chi_1 ({\det}_{{m_1}}) \otimes \cdots \otimes \nu^{\alpha_k} \chi_k({\det}_{{m_k}}) \otimes 1_{\Sp_{2m}})\\
\cong \ & \Ind_{P_{m_1-1, \ldots, m_k-1}}^{\wt{\Sp}_{2n-2k}} \mu_{\psi^{-a}} \nu^{\alpha_1} \chi_1 ({\det}_{{m_1-1}}) \otimes \cdots \otimes \nu^{\alpha_k} \chi_k({\det}_{{m_k-1}}) \\
& \otimes (1_{\Sp_{2m}} \otimes \omega_{\psi^{-a}}),
\end{split}
\end{align}
where the term $\nu^{\alpha_i} \chi_i ({\det}_{{m_i-1}})$ will be omitted if $m_i=1$, for $1 \leq i \leq k$.
\end{prop}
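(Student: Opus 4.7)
The plan is to deduce the stated isomorphism by assembling three results from \cite{GRS11}: Theorem 6.1, which provides the general Bruhat-type decomposition of a Fourier-Jacobi module applied to a parabolically induced representation; Theorem 6.3, which handles the purely $\GL$-inducing data (and is the basis for Proposition \ref{prop1} above); and Proposition 6.7, which identifies the Fourier-Jacobi module of the trivial representation of $\Sp_{2m}$ with $1_{\Sp_{2m}} \otimes \omega_{\psi^{-a}}$ as a representation of $\wt{\Sp}_{2m}(F_v)$.

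First, I would apply Theorem 6.1 of \cite{GRS11} and run through the Bruhat filtration on the double cosets $P_{m_1, \ldots, m_k} \bs \Sp_{2n} / Q$, where $Q$ is the parabolic whose unipotent radical carries the Heisenberg piece used to define $FJ_{\psi_{k-1}^{a}}$. A Weyl-element analysis, analogous to the one used in the proof of Lemma \ref{lemv1} above and in Theorem 6.3 of \cite{GRS11}, shows that nontriviality of $\psi_{k-1}^{a}$ on the root subgroups that a given Weyl element forces into $P_{m_1, \ldots, m_k}$ eliminates every double coset except the one represented by the Weyl element which contracts each $\GL_{m_i}$ block to a $\GL_{m_i-1}$ block while leaving the trailing $\Sp_{2m}$ factor untouched.

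Next, I would read off the surviving orbit's contribution by splitting the computation into its $\GL$-part and its $\Sp_{2m}$-part. The $\GL$-factor contribution is computed by Theorem 6.3 of \cite{GRS11} exactly as in the proof of Proposition \ref{prop1}, producing the induced datum $\nu^{\alpha_1}\chi_1({\det}_{m_1-1}) \otimes \cdots \otimes \nu^{\alpha_k}\chi_k({\det}_{m_k-1})$ on the reduced Levi, together with the genuine factor $\mu_{\psi^{-a}}$ tracking the metaplectic cocycle. The contribution of the trailing $1_{\Sp_{2m}}$ factor is read off from Proposition 6.7 of \cite{GRS11} as $1_{\Sp_{2m}} \otimes \omega_{\psi^{-a}}$. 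Assembling the two pieces via transitivity of parabolic induction in $\wt{\Sp}_{2n-2k}$ yields the right-hand side of \eqref{prop2equ1}, the convention on $m_i=1$ being inherited from $\GL_0$ being trivial.

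The main obstacle, such as there is one, is the metaplectic bookkeeping: one must verify that the cocycle surfacing in the Bruhat step is consistently absorbed into a single $\mu_{\psi^{-a}}$ on the $\GL$-data and into $\omega_{\psi^{-a}}$ on the $\Sp_{2m}$-data, so that the resulting inducing datum is a well-defined genuine representation of the preimage of $P_{m_1-1,\ldots,m_k-1}$ in $\wt{\Sp}_{2n-2k}$. Once this compatibility is checked via Kudla's identity $\gamma_{\psi^{a}}\gamma_{\psi^{-a}} = 1$ already invoked in the proof of Proposition \ref{prop1}, no input beyond the three cited results from \cite{GRS11} is required.
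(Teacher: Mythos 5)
Your proposal follows essentially the same route as the paper, which disposes of this proposition with a one-line citation to Theorem 6.1, Proposition 6.7, and Theorem 6.3 of \cite{GRS11}; you have merely unpacked what ``read out from'' means. One small inaccuracy in the final paragraph: the identity $\gamma_{\psi^{a}}\gamma_{\psi^{-a}}=1$ is the ingredient for Part (2) of Proposition \ref{prop1}, where two Fourier--Jacobi applications force a product of two $\gamma$-factors to cancel; here we apply $FJ$ only once starting from the linear group $\Sp_{2n}$, so only a single $\gamma_{\psi^{-a}}$ arises and the genuineness compatibility of $\mu_{\psi^{-a}}$ with $\omega_{\psi^{-a}}$ is already built into the definitions in (6.1) of \cite{GRS11}, with no $\gamma$-cancellation needed.
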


Before we state the main result of this section, we need to recall the following definition.

\begin{defn}\label{def2}
Given any patition $\ul{q} =[q_1 q_2 \cdots q_r]$ for $\frak{so}_{2n+1}(\BC)$ with $q_1 \geq q_2 \geq \cdots \geq q_r > 0$,
whose even parts occurring with even multiplicity. Let $\ul{q}^- =[q_1 q_2 \cdots q_{r-1} (q_r-1)]$. Then the Barbasch-Vogan duality
$\eta_{\frak{so}_{2n+1}(\BC), \frak{sp}_{2n}(\BC)}$, following \cite[Definition A1]{BV85} and \cite[Section 3.5]{Ac03}, is defined by
$$
\eta_{\frak{so}_{2n+1}(\BC), \frak{sp}_{2n}(\BC)}
(\ul{q}) := ((\ul{q}^-)_{\Sp_{2n}})^t,
$$
where $(\ul{q}^-)_{\Sp_{2n}}$ is the $\Sp_{2n}$-collapse of $\ul{q}^-$.
\end{defn}

In this section, we prove the following theorem.

\begin{thm}\label{thmub2}
Let $\pi$ be an irreducible unitary automorphic representation of $\Sp_{2n}(\BA)$, having, at one unramified local place $v$,
a strongly negative unramified component
$\sigma_{sn,v}$ which is of \textbf{Type I} as in \ref{typeI}. Then, for any symplectic partition $\ul{p}$ of $2n$ with
$$\ul{p} > \eta_{\frak{so}_{2n+1}(\BC), \frak{sp}_{2n}(\BC)}([\prod_{i=1}^l (2m_i+1)]),$$
$\pi$ has no non-vanishing Fourier coefficients attached to $\ul{p}$, in particular, $\ul{p} \notin \mathfrak{p}^m(\pi)$.
\end{thm}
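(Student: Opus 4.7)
The strategy combines the local--global principle for Fourier coefficients with the explicit induced realization of the strongly negative unramified component and the twisted Jacquet module vanishing results of Lemma~\ref{lemv1}. First I would realize $\pi_v\cong\sigma_{sn,v}$ as the unique unramified subquotient of the standard induced representation $\pi_v'$ appearing in \eqref{typeI}, parabolically induced from $P_{m_{l-1}+m_l+1,\ldots,m_2+m_3+1}$ with $k:=(l-1)/2$ GL-blocks (each of size $\geq 2$) and bottom $1_{\Sp_{2m_1}}$. Any nonzero twisted Jacquet module $J_{V_{\ul{p},2},\psi_{\ul{p},\ul{a}}}(\pi_v)$ is then a subquotient of the analogous Jacquet module of $\pi_v'$, so that a nonzero Fourier coefficient of $\pi$ at $\ul{p}$ forces $J_{V_{\ul{p},2},\psi_{\ul{p},\ul{a}}}(\pi_v')\neq 0$. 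I would then invoke Lemma~\ref{lemv1} with this $k$ to obtain that $J_{\psi_{r-1}^{\alpha}}(\pi_v')$ vanishes for all $r\geq k+1$ and every square class $\alpha$, and that $J_{\psi_{(2r+1)^2}}(\pi_v')$ vanishes for $r\geq k+1$ when $m_1\geq 1$, or for $r\geq k$ when $m_1=0$.

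Next I would combine this with the standard factorization of Fourier coefficients through the first row, as developed in \cite{GRS03,GRS11,JL15a}: a nonzero Fourier coefficient at a symplectic partition $\ul{p}$ with $p_1=2r$ even forces $J_{\psi_{r-1}^{\alpha}}(\pi_v')\neq 0$ for some $\alpha$, while one with $p_1=2r+1$ odd (so that $p_2=p_1$ by the symplectic condition) forces $J_{\psi_{(2r+1)^2}}(\pi_v')\neq 0$. A direct calculation with the $\Sp_{2n}$-collapse recipe applied to $\ul{q}^-=[(2m_l+1),\ldots,(2m_2+1),(2m_1)]$ shows that the first part of $\eta_{\frak{so}_{2n+1}(\BC),\frak{sp}_{2n}(\BC)}(\ul{q})$ equals the number of parts of $(\ul{q}^-)_{\Sp_{2n}}$, namely $l=2k+1$ when $m_1\geq 1$ and $l-1=2k$ when $m_1=0$. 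Hence $\ul{p}>\eta(\ul{q})$ forces $p_1\geq 2k+1$ (resp.\ $2k$), and whenever $p_1$ crosses the threshold of Lemma~\ref{lemv1} (namely $p_1\geq 2k+2$ in the even case, or $p_1\geq 2k+3$ in the odd case when $m_1\geq 1$), we reach an immediate contradiction.

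The main obstacle is the boundary situation in which $p_1$ matches the lower bound but falls just short of the Lemma~\ref{lemv1} threshold, namely $p_1=2k+1$ (odd) when $m_1\geq 1$ and $p_1=2k$ (even) when $m_1=0$. To resolve these I would descend via the Fourier--Jacobi module: by Proposition~\ref{prop2}, $FJ_{\psi_{r-1}^{a}}(\pi_v')$ is again an induced representation of the same Type I shape on a double cover of a smaller symplectic group but with one fewer GL-block, so that Lemma~\ref{lemv2} plays the role of Lemma~\ref{lemv1} on the descended side. A nonzero Fourier coefficient of $\pi$ at $\ul{p}$ descends to a nonzero coefficient of the descended representation attached to the truncated partition $\ul{p}'=[p_2,p_3,\ldots]$, and one checks that $\ul{p}'$ continues to strictly dominate the Barbasch--Vogan dual of the reduced parameter obtained by removing the top pair of odd parts from $\ul{q}$. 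Induction on $l$ then closes the argument. The most delicate piece is the odd boundary case $(p_1=p_2=2k+1,\ m_1\geq 1)$, since $J_{\psi_{(2k+1)^2}}(\pi_v')$ itself need not vanish; resolving it will require the finer structure of Fourier coefficients associated to composite partitions (cf.\ Section~3 of \cite{GRS03}) to exchange the odd pair for a larger even first row before re-applying part~(1) of Lemma~\ref{lemv1}.
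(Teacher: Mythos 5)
Your proposal assembles all the right ingredients — the realization of $\pi_v$ as the unramified subquotient of the induced representation in \eqref{typeI}, the Jacquet module vanishing of Lemma~\ref{lemv1} (and \ref{lemv2} on the double cover), the Fourier--Jacobi descent of Propositions~\ref{prop1} and~\ref{prop2}, the composite-partition machinery of \cite{GRS03,JL15a}, and an induction with base cases handled by the boundary analysis — and your threshold computation for the first part of $\eta_{\frak{so}_{2n+1},\frak{sp}_{2n}}(\ul{q})$ and the identification of the two genuine boundary cases are both correct. This is the same strategy the paper follows.

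However, the core descent/induction step as you describe it contains a genuine gap. You claim that $FJ_{\psi_{r-1}^{a}}(\pi_v')$ is "again an induced representation of the same Type I shape on a double cover of a smaller symplectic group but with one fewer GL-block." This is wrong: Proposition~\ref{prop2} shows that a single Fourier--Jacobi descent preserves the number $k=(l-1)/2$ of GL-blocks and merely drops each $\det$-size by one; moreover the bottom becomes $1_{\widetilde{\Sp}_{2m_1}}\otimes\omega_{\psi^{-a}}$, which is not of Type~I form, and by Theorem~\ref{thm9} the intermediate unramified components are not even strongly negative in general. One cannot therefore immediately feed the descended representation into the induction hypothesis. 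What the paper actually does is iterate the two-step descent ($\Sp\to\widetilde{\Sp}\to\Sp$, with re-realizations via Kudla's lemma~\cite{Kud96} and a maximality check via Lemma~\ref{lemv1} or~\ref{lemv2} at each step) exactly $2m_2+2$ times; only then does one arrive at a representation $\rho'$ of strictly smaller $l$, with Jordan blocks $\{(1_{\GL_1},\,2m_i-2m_2-1)\}_{i=3}^{l}$. This is also not the reduced parameter you propose: your "removing the top pair of odd parts from $\ul{q}$" deletes the two largest Jordan blocks, whereas the correct reduction deletes the two smallest ones and shifts the remaining parts down by $2m_2+2$. Without the iterated two-step descent and the correct reduced parameter, the domination check at the bottom of your argument does not match what the descent produces, and the induction does not close.
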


\begin{proof}
By Definition \ref{def2},
$$\eta_{\frak{so}_{2n+1}(\BC), \frak{sp}_{2n}(\BC)}([\prod_{i=1}^l (2m_i+1)]) = [(\prod_{i=2}^l (2m_i+1))_{\Sp} (2m_1)]^t.$$

We prove by induction on $l$. When $l=1$, it is easy to see that $\sigma_{sn,v}$ is the trivial representation, which implies that
for any symplectic partition
$$\ul{p} > \eta_{\frak{so}_{2n+1}(\BC), \frak{sp}_{2n}(\BC)}([(2m_1+1)]) = [(2m_1)]^t=[1^{2m_1}],$$
and a datum $\ul{a}$, the twisted Jacquet module $J_{V_{\ul{p},2}, \psi_{\ul{p},\ul{a}}}(\sigma_{sn,v})$ vanishes identically.
Therefore, $\pi$ has no non-vanishing Fourier coefficients attached to such $\ul{p}$.
We assume that the theorem is true for any $l' < l$.

By assumption, $\sigma_{sn,v}$ is the unique strongly negative unramified constituent of the following induced representation
\begin{align}\label{sec4equ9}
\begin{split}
\rho := \ & \nu^{\frac{m_{l-1}-m_l}{2}} 1_{{\det}_{m_{l-1}+m_l+1}}
\times
\nu^{\frac{m_{l-3}-m_{l-2}}{2}} 1_{{\det}_{m_{l-3}+m_{l-2}+1}}\\
& \times \cdots \times
\nu^{\frac{m_{2}-m_3}{2}} 1_{{\det}_{m_{2}+m_3+1}}
\rtimes 1_{\Sp_{2m_1}}.
\end{split}
\end{align}
And
\begin{equation*}
{\Jord}(\sigma_{sn,v})=\{(1_{\GL_1}, 2m_1+1), (1_{\GL_1}, 2m_2+1),
\ldots, (1_{\GL_1}, 2m_l+1)\},
\end{equation*}
with $2m_1+1 < 2m_2+1 < \cdots < 2m_l+1$.
Since $l$ is odd, write $l=2s+1$.

By Proposition \ref{prop2},
\begin{align}\label{sec4equ14}
\begin{split}
\rho_1
:= \ &  FJ_{\psi_{s-1}^{1}} (\rho)\\
= & \mu_{\psi^{-1}} \nu^{\frac{m_{2s}-m_{2s+1}}{2}} 1_{{\det}_{m_{2s}+m_{2s+1}}}
\times
\nu^{\frac{m_{2s-2}-m_{2s-1}}{2}} 1_{{\det}_{m_{2s-2}+m_{2s-1}}}\\
& \times \cdots \times
\nu^{\frac{m_{2}-m_3}{2}} 1_{{\det}_{m_{2}+m_3}}
\rtimes (1_{\wt{\Sp}_{2m_1}}\otimes \omega_{\psi^{-1}}).
\end{split}
\end{align}

Note that $m_{2s}+m_{2s+1}+1 > m_{2s-2}+m_{2s-1}+1 > \cdots > m_{2}+m_3+1 > 3$. By Lemma \ref{lemv1}, $J_{\psi_{r-1}^{\alpha}}(\rho) \equiv 0$
for any $r \geq s+1$ and any $\alpha \in F^*/(F^*)^2$;
and $J_{\psi_{(2r+1)^2}}(\rho) \equiv 0$ for any $r \geq s$ if $m_1=0$, and for any $r \geq s+1$ if $m_1 >0$.
From Theorem 6.3 of \cite{GRS11}, we can see that $J_{\psi_{r-1}^{\alpha}}(\rho) \equiv 0$ if and only if $FJ_{\psi_{r-1}^{\alpha}}(\rho) \equiv 0$. Therefore, $[(2s)1^{2n-2s}]$ is the maximal partition of the type $[(2r)1^{2n-2r}]$  with respect to which $\rho$ can have a nonzero Fourier-Jacobi module, in this single step.

By \cite[Example 5.4, Page 52]{Kud96},
the unique unramified component of $\rho_1$ in
\eqref{sec4equ14} is the same as the unique unramified component of the following induced representation:
\begin{align}\label{sec4equ15}
\begin{split}
\rho_1'
:= \ & \mu_{\psi^{-1}} \nu^{\frac{m_{2s}-m_{2s+1}}{2}} 1_{{\det}_{m_{2s}+m_{2s+1}}}
\times
\nu^{\frac{m_{2s-2}-m_{2s-1}}{2}} 1_{{\det}_{m_{2s-2}+m_{2s-1}}}\\
& \times \cdots \times
\nu^{\frac{m_{2}-m_3}{2}} 1_{{\det}_{m_{2}+m_3}}
\times \nu^{\frac{-m_1}{2}} 1_{{\det}_{m_1}}
\rtimes 1_{\wt{\Sp}_{0}}.
\end{split}
\end{align}


By Part (2) of Proposition \ref{prop1},
\begin{align}\label{sec4equ16}
\begin{split}
\rho_2
:= \ &  FJ_{\psi_{s}^{-1}} (\rho_1')\\
= & \nu^{\frac{m_{2s}-m_{2s+1}}{2}} 1_{{\det}_{m_{2s}+m_{2s+1}-1}}
\times
\nu^{\frac{m_{2s-2}-m_{2s-1}}{2}} 1_{{\det}_{m_{2s-2}+m_{2s-1}-1}}\\
& \times \cdots \times
\nu^{\frac{m_{2}-m_3}{2}} 1_{{\det}_{m_{2}+m_3-1}}
\times \nu^{\frac{-m_1}{2}} 1_{{\det}_{m_1-1}}
\rtimes 1_{{\Sp}_{0}},
\end{split}
\end{align}
whose irreducible unramified constituent is the same as that of the following induced representation:
\begin{align}\label{sec4equ17}
\begin{split}
\rho_2'
:= \ & \nu^{\frac{m_{2s}-m_{2s+1}}{2}} 1_{{\det}_{m_{2s}+m_{2s+1}-1}}
\times
\nu^{\frac{m_{2s-2}-m_{2s-1}}{2}} 1_{{\det}_{m_{2s-2}+m_{2s-1}-1}}\\
& \times \cdots \times
\nu^{\frac{m_{2}-m_3}{2}} 1_{{\det}_{m_{2}+m_3-1}}
\rtimes 1_{{\Sp}_{2m_1-2}}.
\end{split}
\end{align}

Note that $m_{2s}+m_{2s+1} > m_{2s-2}+m_{2s-1} > \cdots > m_{2}+m_3 > 2$.
Similarly as above, by Lemma \ref{lemv2}, $J_{\psi_{r-1}^{\alpha}}(\rho_1') \equiv 0$ for any $r \geq s+2$ and any $\alpha \in F^*/(F^*)^2$;
and $J_{\psi_{(2r+1)^2}}(\rho_1') \equiv 0$ for any $r \geq s$ if $m_1=1$, and for any $r \geq s+1$ if $m_1 >1$.
Therefore, $[(2s+2)1^{2n-2s-2s-2}]$ is also the maximal partition of the type $[(2r)1^{2n-2s-2r}]$ with respect to which $\rho_1'$ can have a nonzero Fourier-Jacobi module, in this single step. We need to do this routine checking about the ``maximality" using Lemma \ref{lemv1} or Lemma \ref{lemv2}, every time we apply Proposition \ref{prop1} or Proposition \ref{prop2}. We will omit this part in the following steps.

It is easy to see that we can repeat the above 2-step-procedure $m_1-1$ more times, then we get the following induced representation:
\begin{align}\label{sec4equ18}
\begin{split}
 & \rho_{2m_1}\\
 := \ & \mu_{\psi^{-1}} \nu^{\frac{m_{2s}-m_{2s+1}}{2}} 1_{{\det}_{m_{2s}+m_{2s+1}-2m_1+1}}
\times
\nu^{\frac{m_{2s-2}-m_{2s-1}}{2}} 1_{{\det}_{m_{2s-2}+m_{2s-1}-2m_1+1}}\\
& \times \cdots \times
\nu^{\frac{m_{2}-m_3}{2}} 1_{{\det}_{m_{2}+m_3-2m_1+1}}
\rtimes 1_{{\Sp}_{0}}.
\end{split}
\end{align}

Then, we continue with $\rho_{2m_1}$.
By Part (1) of Proposition \ref{prop1},
\begin{align}\label{sec4equ10}
\begin{split}
 & \rho_{2m_1+1}\\
 := \ & FJ_{\psi_{s-1}^{1}} (\rho_{2m_1})\\
 = \ & \mu_{\psi^{-1}} \nu^{\frac{m_{2s}-m_{2s+1}}{2}} 1_{{\det}_{m_{2s}+m_{2s+1}-2m_1}}
\times
\nu^{\frac{m_{2s-2}-m_{2s-1}}{2}} 1_{{\det}_{m_{2s-2}+m_{2s-1}-2m_1}}\\
& \times \cdots \times
\nu^{\frac{m_{2}-m_3}{2}} 1_{{\det}_{m_{2}+m_3-2m_1}}
\rtimes 1_{\wt{\Sp}_{0}}.
\end{split}
\end{align}

By Part (2) of Proposition \ref{prop1},
\begin{align}\label{sec4equ11}
\begin{split}
 & \rho_{2m_1+2}\\
 := \ & FJ_{\psi_{s-1}^{-1}} (\rho_{2m_1+1})\\
 = \ & \nu^{\frac{m_{2s}-m_{2s+1}}{2}} 1_{{\det}_{m_{2s}+m_{2s+1}-2m_1-1}}
\times
\nu^{\frac{m_{2s-2}-m_{2s-1}}{2}} 1_{{\det}_{m_{2s-2}+m_{2s-1}-2m_1-1}}\\
& \times \cdots \times
\nu^{\frac{m_{2}-m_3}{2}} 1_{{\det}_{m_{2}+m_3-2m_1-1}}
\rtimes 1_{{\Sp}_{0}}.
\end{split}
\end{align}

It is easy to see that we can repeat the 2-step-procedure $m_2-m_1$ more times, then
we get the following induced representation
\begin{align}\label{sec4equ12}
\begin{split}
& \rho_{2m_2+2}\\
:= \ & \nu^{\frac{m_{2s}-m_{2s+1}}{2}} 1_{{\det}_{m_{2s}+m_{2s+1}-1-2m_2}}
\times
\nu^{\frac{m_{2s-2}-m_{2s-1}}{2}} 1_{{\det}_{m_{2s-2}+m_{2s-1}-1-2m_2}}\\
& \times \cdots \times
\nu^{\frac{m_{2}-m_3}{2}} 1_{{\det}_{m_3-m_2-1}}
\rtimes 1_{{\Sp}_{0}},
\end{split}
\end{align}
whose unramified component is the same as that of the following induced representation
\begin{align}\label{sec4equ13}
\begin{split}
\rho' := \ & \nu^{\frac{m_{2s}-m_{2s+1}}{2}} 1_{{\det}_{m_{2s}+m_{2s+1}-1-2m_2}}
\times
\nu^{\frac{m_{2s-2}-m_{2s-1}}{2}} 1_{{\det}_{m_{2s-2}+m_{2s-1}-1-2m_2}}\\
& \times \cdots \times
\nu^{\frac{m_{4}-m_{5}}{2}} 1_{{\det}_{m_{4}+m_{5}-1-2m_2}}
\rtimes 1_{{\Sp}_{2m_3-2m_2-2}}.
\end{split}
\end{align}

By Theorem \ref{thm8},
$\rho'$ has a unique strongly negative unramified constituent $\sigma_{sn}'$, and
\begin{align*}
{\Jord}(\sigma_{sn,v}')
= \ & \{(1_{\GL_1}, 2m_3-2m_2-1), (1_{\GL_1}, 2m_4-2m_2-1),\\
& \ldots, (1_{\GL_1}, 2m_{2s+1}-2m_2-1)\},
\end{align*}
with $2s-1$ Jordan blocks.

Note that in general, the unique unramified component of $\rho_{2i}$, $1 \leq i \leq m_2$, may not be strongly negative.

By induction assumption, for any irreducible unitary automorphic representation $\pi'$ of $\Sp_{2m}(\BA)$ which has the unique strongly negative unramified constituent of $\sigma_{sn,v}'$ as a local component, and for any symplectic partition
$\ul{p}$ of $2m$ with
$$\ul{p} > [(\prod_{i=4}^{2s+1} (2m_i-2m_2-1))_{\Sp}(2m_3-2m_2-2)]^t,$$
$\pi'$ has no non-vanishing Fourier coefficients attached to $\ul{p}$.

From the above discussion, we have the following composite partition
\begin{align}\label{sec4equ23}
\begin{split}
& [(2s)1^{2n-2s}]\circ [(2s+2)1^{2n-2(2s+1)}] \circ \cdots \\ \circ \ & [(2s)1^{2n-2m_1(2s+1)+2s+2}]
\circ  [(2s+2)1^{2n-2m_1(2s+1)}]\\
\circ  \ & [(2s)1^{2n-2m_1(2s+1)-2s}]\circ \cdots \circ [(2s)1^{2n-(2m_2+2)2s-2m_1}]\\
\circ \ & [(\prod_{i=4}^{2s+1} (2m_i-2m_2-1))_{\Sp}(2m_3-2m_2-2)]^t,
\end{split}
\end{align}
which may provide non-vanishing Fourier coefficients for $\pi$,
where there are total $m_1$ copies of the pair
$(2s,2s+2)$ in the first two rows, and there are $2m_2+2-2m_1$ copies of
$(2s)$ in the third row.

By Proposition 3.2 of \cite{JL15a},
the composite partition in \eqref{sec4equ23} provides non-vanishing Fourier coefficients for $\pi$ if and only if
the following composite partition
provides non-vanishing Fourier coefficients for $\pi$
\begin{align}\label{sec4equ22}
\begin{split}
& [(2s+1)^21^{2n-2(2s+1)}]\circ \cdots \circ [(2s+1)^21^{2n-2m_1(2s+1)}]\\
\circ \ & [(2s)1^{2n-2m_1(2s+1)-2s}]\circ \cdots \circ [(2s)1^{2n-(2m_2+2)2s-2m_1}]\\
\circ \ & [(\prod_{i=4}^{2s+1} (2m_i-2m_2-1))_{\Sp}(2m_3-2m_2-2)]^t.
\end{split}
\end{align}

By the recipe in Theorem 6.3.8 of \cite{CM93} (see the beginning of the current section), it is easy to see that
\begin{align}\label{sec4equ24}
\begin{split}
& [(\prod_{i=4}^{2s+1} (2m_i-2m_2-1))_{Sp}(2m_3-2m_2-2)]^t\\
= \ & [(2m_{2s+1}-2m_2-2)(2m_{2s}-2m_2) \cdots (2m_{5}-2m_2-2)(2m_4-2m_2)\\
& \cdot (2m_3-2m_2-2)]^t\\
= \ & 1^{2m_{2s+1}-2m_2-2} + 1^{2m_{2s}-2m_2} + \cdots + 1^{2m_{5}-2m_2-2} + 1^{2m_4-2m_2} \\
& + 1^{2m_3-2m_2-2}\\
= \ & [(2s-1)^{2m_3-2m_2-2}(2s-2)^{2m_4+2-2m_3}(2s-3)^{2m_5-2m_4-2} \cdots \\
& (2)^{2m_{2s}+2-2m_{2s-1}} 1^{2m_{2s+1}-2m_{2s}-2}].
\end{split}
\end{align}
Therefore, by \cite[Lemma 3.1]{JL15a} or  \cite[Lemma 2.6]{GRS03}, and \cite[Proposition 3.3]{JL15a},
if the composite partition in \eqref{sec4equ22} provides non-vanishing Fourier coefficients for $\pi$, then so is
the following partition
\begin{equation}\label{sec4equ21}
[(2s+1)^{2m_1}(2s)^{2m_2+2-2m_1}((\prod_{i=4}^{2s+1} (2m_i-2m_2-1))_{Sp}(2m_3-2m_2-2))^t].
\end{equation}


Given a symplectic partition $\ul{p}=[p_1^{e_1}p_2^{e_2} \cdots p_r^{e_r}]$ of $2n$, that is, $e_i=1$ if $p_i$ is even, $e_i=2$ if $p_i$ is odd. Assume that $\ul{p}$ is bigger than the partition in \eqref{sec4equ21}. Write the symplectic partition in \eqref{sec4equ21} as $\ul{q} = [q_1^{e_1}q_2^{e_2} \cdots q_t^{e_t}]$. Assume that $1 \leq i_0 \leq r$ is the unique index such that
$p_i = q_i$ for $1 \leq i < i_0$, and $p_{i_0} > q_{i_0}$. If $i_0=1$, then by Lemma \ref{lemv1}, $\rho$ in \eqref{sec4equ9} has no nonzero twisted Jacquet module attached to the partition $[p_1^{e_1} 1^{2n-p_1e_1}]$.
Therefore, $\pi$ has no nonzero Fourier coefficients attached to the partition $[p_1^{e_1} 1^{2n-p_1e_1}]$. By Lemma 3.1 of \cite{JL15a} or Lemma 2.6 of \cite{GRS03}, and Proposition 3.3 of \cite{JL15a}, $\pi$ has no nonzero Fourier coefficients attached to the partition $\ul{p}$.

Next, we may assume that $i_0>1$. For $p_1=q_1=2s$, which means that $m_1=0$, then we take the Fourier-Jacobi module $FJ_{\psi_{s-1}^{1}}$ as in \eqref{sec4equ14} for $\rho$ in \eqref{sec4equ9}. If $p_1=q_1=2s+1$, which implies that $m_1>0$, then by Proposition 3.2 of \cite{JL15a}, to consider the Fourier coefficients attached to the partition $[(2s+1)^21^{2n-2(2s+1)}]$, it  suffices to consider the composite partition $[(2s)1^{2n-2s}]\circ[(2s+2)1^{2n-2(2s+1)}]$.
Take the Fourier-Jacobi modules $FJ_{\psi_{s-1}^{1}}$ and $FJ_{\psi_{s}^{-1}}$ consequently as in \eqref{sec4equ14} and \eqref{sec4equ16} for $\rho$ in \eqref{sec4equ9}.
After repeating the above procedure for $p_i=q_i$, $2 \leq i < i_0$, we come to a similar situation as in the case of $i_0=1$. Using a similar argument as in the case of $i_0=1$, applying first  Lemma \ref{lemv1} if having taken even times of Fourier-Jacobi modules or Lemma \ref{lemv2} otherwise,
then Lemma 3.1 of \cite{JL15a} or Lemma 2.6 of \cite{GRS03}, and Proposition 3.3 of \cite{JL15a}
we can conclude that $\pi$ also has no nonzero Fourier coefficients attached to the partition $\ul{p}$.

Hence, for any symplectic partition $\ul{p}$ which is bigger than the partition in \eqref{sec4equ21}, $\pi$ has no nonzero Fourier coefficients attached to the partition $\ul{p}$.
Therefore, it remains to show that the partition in \eqref{sec4equ21} is exactly equal to
$\eta_{\frak{so}_{2n+1}(\BC), \frak{sp}_{2n}(\BC)}([\prod_{i=1}^l (2m_i+1)])$.

By a similar calculation as in \eqref{sec4equ24}, $[(\prod_{i=2}^l (2m_i+1))_{Sp} (2m_1)]^t$ equals
\begin{align*}
& [(2s+1)^{2m_1}(2s)^{2m_2+2-2m_1}(2s-1)^{2m_3-2m_2-2}(2s-2)^{2m_4+2-2m_3}\\
& (2s-3)^{2m_5-2m_4-2} \cdots
 (2)^{2m_{2s}+2-2m_{2s-1}} 1^{2m_{2s+1}-2m_{2s}-2}].
\end{align*}
Therefore, the partition
$$
[(2s+1)^{2m_1}(2s)^{2m_2+2-2m_1}((\prod_{i=4}^{2s+1} (2m_i-2m_2-1))_{Sp}(2m_3-2m_2-2))^t]
$$
is equal to
$$
[(\prod_{i=2}^l (2m_i+1))_{Sp} (2m_1)]^t = \eta_{\frak{so}_{2n+1}(\BC), \frak{sp}_{2n}(\BC)}([\prod_{i=1}^l (2m_i+1)]).
$$
Hence, for any symplectic partition
$\ul{p}$ of $2n$ with
$$\ul{p} > \eta_{\frak{so}_{2n+1}(\BC), \frak{sp}_{2n}(\BC)}([\prod_{i=1}^l (2m_i+1)]),$$
$\pi$ has no non-vanishing Fourier coefficients attached to $\ul{p}$,
in particular, $\ul{p} \notin \mathfrak{p}^m(\pi)$.
This completes the proof of the theorem.
\end{proof}

Applying Theorem \ref{thmub2}, we can easily obtain the following analogue result.

\begin{thm}\label{thmub3}
Let $\pi$ be an irreducible unitary automorphic representation of $\Sp_{2n}(\BA)$, having, at one unramified local place $v$,
a strongly negative unramified component $\sigma_{sn,v}$ which is of \textbf{Type II} as in \ref{typeII}.
Then, for any symplectic partition $\ul{p}$ of $2n$ with
$$\ul{p} > \eta_{\frak{so}_{2n+1}(\BC), \frak{sp}_{2n}(\BC)}([\prod_{i=1}^k (2n_i+1)(1)]),$$
$\pi$ has no non-vanishing Fourier coefficients attached to $\ul{p}$,
in particular, $\ul{p} \notin \mathfrak{p}^m(\pi)$.
\end{thm}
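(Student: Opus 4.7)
The plan is to adapt the proof of Theorem \ref{thmub2} essentially verbatim to the Type II setting. The key observation justifying the remark that this follows ``easily'' is that neither Lemmas \ref{lemv1} and \ref{lemv2} nor Propositions \ref{prop1} and \ref{prop2} depend on the precise choice of unitary characters appearing in the inducing data: they treat arbitrary quasi-characters $\mu_i$ (respectively $\chi_i$). Consequently, the replacement of the trivial character $1_{\GL_1}$ by the quadratic character $\lambda_0$ leaves the vanishing analysis of twisted Jacquet modules and the Fourier-Jacobi reduction completely intact.

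I would induct on $k$, which is even in Type II, with base case $k=0$ corresponding to $\sigma_{sn,v} = 1_{\Sp_0}$ (so $n=0$ and the statement is vacuous). For the induction step, apply the same alternating two-step Fourier-Jacobi procedure used in the proof of Theorem \ref{thmub2}: starting from the induced representation \eqref{typeII}, take $FJ_{\psi_{s-1}^{1}}$ followed by $FJ_{\psi_{s}^{-1}}$ (with $2s = k$), compute each stage explicitly via Propositions \ref{prop1} and \ref{prop2}, and at each stage invoke Lemma \ref{lemv1} or \ref{lemv2} to certify that no larger partition of the type $[(2r)1^{2n-2r}]$ or $[(2r+1)^2 1^{2n-4r-2}]$ admits a surviving twisted Jacquet module. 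After finitely many iterations the data reduces to an induced representation whose unique unramified constituent is of Type II but with a strictly smaller value of $k$, so the inductive hypothesis closes the argument.

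The remaining work, mirroring \eqref{sec4equ22}--\eqref{sec4equ24}, is to assemble the resulting composite partition, via Proposition 3.2, Proposition 3.3 and Lemma 3.1 of \cite{JL15a} (or Lemma 2.6 of \cite{GRS03}), into a single partition of $2n$ dominating all non-vanishing Fourier coefficients of $\pi$, and then identify that partition with $\eta_{\mathfrak{so}_{2n+1}(\BC), \mathfrak{sp}_{2n}(\BC)}([\prod_{i=1}^k (2n_i+1)(1)])$ using the $\Sp$-collapse recipe of Theorem 6.3.8 of \cite{CM93}. Note that the trailing part $(1)$ in the partition $[\prod_{i=1}^k (2n_i+1)(1)]$ is precisely what is shaved off by the $(\cdot)^-$ operation of Definition \ref{def2}, so that the Barbasch--Vogan dual simplifies to $\bigl(([\prod_{i=1}^k(2n_i+1)])_{\Sp_{2n}}\bigr)^t$; the same explicit collapse computation used for Type I produces exactly this partition with $k$ playing the role that $l-1$ played before.

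The main obstacle is the bookkeeping needed to track the $\lambda_0$ twists through the successive Fourier-Jacobi steps and to verify that the reduced representation at each stage really falls back into the Type II framework (rather than a genuine mixture of Types I and II) so that the induction hypothesis can be cleanly applied. Once the intermediate inducing data is normalized via the unramified principal-series identifications analogous to \eqref{sec4equ15} and \eqref{sec4equ17}, this reduces to the same partition-theoretic manipulations as in the Type I proof.
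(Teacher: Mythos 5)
Your key observation---that Lemmas~\ref{lemv1}, \ref{lemv2} and Propositions~\ref{prop1}, \ref{prop2} are insensitive to the unramified characters in the inducing data---is exactly the insight the paper uses, but the paper deploys it more efficiently than you propose. Rather than re-running the entire two-step Fourier--Jacobi induction with the $\lambda_0$ twists carried along, the paper simply observes that $\rho$ (Type II) differs from the Type I representation $\rho'$ of \eqref{sec4equ20} (which has $m_1=0$, $l=k+1$, and Jordan blocks $\{(1_{\GL_1},1),(1_{\GL_1},2n_1+1),\ldots,(1_{\GL_1},2n_k+1)\}$) only by $\lambda_0$ twists on the $\GL$ blocks, and since those twists are invisible to the twisted-Jacquet-module and Fourier--Jacobi analysis, the argument of Theorem~\ref{thmub2} applied to $\rho'$ already yields the claim for $\pi$. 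The appearance of the trailing $(1)$ in the partition $[\prod_{i=1}^k(2n_i+1)(1)]$ is then exactly the $2m_1+1=1$ block, so no separate Barbasch--Vogan computation is needed.

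Your proposed induction on $k$, by contrast, has a genuine gap, which you flag as the ``main obstacle'' but then underestimate. After one pass of the alternating Fourier--Jacobi procedure starting from the Type II representation (with $\lambda_0$ retained), the reduced induced representation has $\lambda_0$-twisted $\GL$ blocks together with a bottom $1_{\Sp_{2(n_2-n_1-1)}}$, hence a Jordan block $(1_{\GL_1}, 2n_2-2n_1-1)$. Unless $n_2 = n_1+1$, this block has part strictly larger than $1$, so the reduced representation is a genuine mixture of $\lambda_0$- and $1_{\GL_1}$-Jordan blocks supported on a nontrivial symplectic bottom---it is \emph{not} of Type~II, and your Type~II inductive hypothesis cannot be invoked. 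The ``unramified principal-series normalization'' you gesture at (the identifications analogous to \eqref{sec4equ15}, \eqref{sec4equ17}) does not change the characters, so it does not repair this. The correct move, which your own key observation already licenses, is to erase the characters entirely: replace $\lambda_0$ by $1_{\GL_1}$ at the outset, reduce to the Type I representation $\rho'$, and invoke the argument of Theorem~\ref{thmub2} directly. With that one change your outline coincides with the paper's proof.
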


\begin{proof}
By assumption, $\sigma_{sn,v}$ is the unique strongly negative unramified constituent of the following induced representation
\begin{align}\label{sec4equ19}
\begin{split}
\rho :=  \ & \nu^{\frac{n_{k-1}-n_k}{2}} \lambda_0({\det}_{n_{k-1}+n_k+1})
\times
\nu^{\frac{n_{k-3}-n_{k-2}}{2}} \lambda_0({\det}_{n_{k-3}+n_{k-2}+1})\\
& \times \cdots \times
\nu^{\frac{n_{1}-n_2}{2}} \lambda_0({\det}_{n_{1}+n_2+1})
\rtimes 1_{\Sp_{0}}.
\end{split}
\end{align}
And
\begin{equation*}
{\Jord}(\sigma_{sn,v})=\{(\lambda_0, 2n_1+1), (\lambda_0, 2n_2+1),
\ldots, (\lambda_0, 2n_k+1), (1_{\GL_1},1)\},
\end{equation*}
with $2n_1+1 < 2n_2+1 < \cdots < 2n_k+1$ and $k$ being even.

It is easy to see that $\rho$ can be written as $\lambda_0 \rho'$, where:
\begin{align}\label{sec4equ20}
\begin{split}
\rho' :=  \ & \nu^{\frac{n_{k-1}-n_k}{2}} 1_{{\det}_{n_{k-1}+n_k+1}}
\times
\nu^{\frac{n_{k-3}-n_{k-2}}{2}} 1_{{\det}_{n_{k-3}+n_{k-2}+1}}\\
& \times \cdots \times
\nu^{\frac{n_{1}-n_2}{2}} 1_{{\det}_{n_{1}+n_2+1}}
\rtimes 1_{\Sp_{0}}.
\end{split}
\end{align}

By Theorem \ref{thm8}, $\rho'$ also has a unique strongly negative unramified component $\sigma_{sn,v}'$ with
\begin{equation*}
{\Jord}(\sigma_{sn,v}')=\{(1, 2n_1+1), (1, 2n_2+1),
\ldots, (1, 2n_k+1), (1_{\GL_1},1)\}.
\end{equation*}

Applying the argument in the proof of Theorem \ref{thmub2} to $\rho'$, we can easily see that for any symplectic partition $\ul{p}$ of $2n$ with
$$\ul{p} > \eta_{\frak{so}_{2n+1}(\BC), \frak{sp}_{2n}(\BC)}([\prod_{i=1}^k (2n_i+1)(1)]),$$
$\pi$ has no non-vanishing Fourier coefficients attached to $\ul{p}$,
in particular, $\ul{p} \notin \mathfrak{p}^m(\pi)$.
This proves the theorem.
\end{proof}

\begin{rmk}\label{rmk7}
If an irreducible unitary automorphic representation $\pi$ of $\Sp_{2n}(\BA)$ has as a unramified local component a strongly negative unramified component $\sigma_{sn}$ that is neither of \textbf{Type I} nor \textbf{Type II}, that is, two characters
$\lambda_0$ and $1_{\GL_1}$ are mixed, then the above computation will get more complicated. We omit the detail here.
\end{rmk}


\section{Vanishing of Certain Fourier Coefficients: General Case}

In this section, we continue to characterize the vanishing property of Fourier coefficients for certain irreducible automorphic representations, based on local unramified information. We
prove the following theorem, which is a generalization of Theorem \ref{thmub2}.

\begin{thm}\label{thmub1}
Let $\pi$ be an irreducible unitary automorphic representation of $\Sp_{2n}(\BA)$ which has, at one unramified local place $v$
an unramified component $\sigma_v$ of \textbf{Type III} as in \ref{typeIII}. Then the following hold.
\begin{enumerate}
\item For any symplectic partition $\ul{p}$ of $2n$ with
$$\ul{p} > \ul{p}_1 := ([(\prod_{j=1}^t n_j^2) (\prod_{(\chi,m,\alpha) \in \textbf{e}} m^2) (\prod_{i=2}^l (2m_i+1))_{\Sp}(2m_1)]^t)_{\Sp},$$
$\pi$ has no non-vanishing Fourier coefficients attached to $\ul{p}$, in particular, $\ul{p} \notin \mathfrak{p}^m(\pi)$.
\item The partition $\ul{p}_1$ has the property that
$$\ul{p}_1 = \eta_{\frak{so}_{2n+1}(\BC), \frak{sp}_{2n}(\BC)}([(\prod_{j=1}^t n_j^2) (\prod_{(\chi,m,\alpha) \in \textbf{e}} m^2) (\prod_{i=1}^l (2m_i+1))]).$$
\end{enumerate}
\end{thm}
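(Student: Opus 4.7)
The plan is to reduce the general Type III case to the strongly negative Type I case already handled in Theorem \ref{thmub2}, by stripping off the GL-inducing data one layer at a time. Writing out the definition of Type III in \eqref{typeIII}, we see that $\sigma_v$ is the unramified constituent of an induced representation from a parabolic $P_{\underline{m}}$ whose Levi is a product $\GL_{m} \times \cdots \times \GL_{m'} \times \Sp_{2m_1}$, where the GL-blocks come from the multiset $\mathbf{e}$ together with the data $(\chi_j, n_j)$ defining $\sigma_{neg}$, and the symplectic piece supports the Type I strongly negative constituent $\sigma_{sn}$. Thus $\sigma_v$ is a subquotient of an induced representation of the form \eqref{sec4equ1}, with the last factor being $1_{\Sp_{2m_1}}$.

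For Part (1), I would combine three tools. First, Lemma \ref{lemv1} directly rules out nonzero twisted Jacquet modules of $\sigma_v$ associated to partitions $[(2r)1^{2n-2r}]$ and $[(2r+1)^{2}1^{2n-4r-2}]$ once $r$ exceeds the total count of GL-blocks in the induced model; this gives immediate vanishing of the corresponding local Jacquet modules and hence, by the local-global principle for Fourier coefficients (via the unramified place $v$), of the corresponding global Fourier coefficients of $\pi$. Second, for any symplectic partition $\underline{p}$ strictly larger than $\underline{p}_1$, one locates the first row where $\underline{p}$ exceeds $\underline{p}_1$ and, exactly as in the argument near \eqref{sec4equ22}--\eqref{sec4equ23} of the proof of Theorem \ref{thmub2}, one replaces the Fourier coefficient attached to $\underline{p}$ by a composite Fourier-Jacobi coefficient built from partitions of the form $[(2r)1^{*}]$ and $[(2r+1)^{2}1^{*}]$ using Proposition 3.2 of \cite{JL15a} (together with Lemma 3.1 of \cite{JL15a} or Lemma 2.6 of \cite{GRS03}, and Proposition 3.3 of \cite{JL15a}). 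Third, one peels off each GL-induced layer via Propositions \ref{prop1} and \ref{prop2} (which are already how the strongly negative case proceeded in Theorem \ref{thmub2}): each application of $FJ_{\psi_{r-1}^{a}}$ either lowers the rank and strips one $\GL$-block, or swaps $\Sp$ and $\widetilde{\Sp}$ via the cocycle identity of \cite[p.~17]{Kud96}. When all the GL-blocks from $\mathbf{e}$ and the $(\chi_j, n_j)$ have been peeled off, what remains is precisely the Type I strongly negative representation $\sigma_{sn}$ on $\Sp_{2m_1'}$ (for the residual rank), and Theorem \ref{thmub2} applies to conclude that the residual partition on that factor cannot exceed $\eta([\prod_{i=1}^{l}(2m_i+1)])$.

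For Part (2), the identification of $\underline{p}_1$ with the Barbasch-Vogan duality partition is a combinatorial check, essentially identical to the computation \eqref{sec4equ24} performed in the proof of Theorem \ref{thmub2}, but now carried out in the presence of the extra columns $n_j^{2}$ and $m^{2}$ contributed by the $(\chi_j, n_j)$-blocks and by $\mathbf{e}$. Concretely, write
\[
\underline{q} := [(\prod_{j=1}^{t} n_j^{2})\,(\prod_{(\chi,m,\alpha)\in\mathbf{e}} m^{2})\,(\prod_{i=1}^{l}(2m_i+1))],
\]
so that $\underline{q}$ is a partition of $2n+1$ with even parts occurring with even multiplicity, as required by Definition \ref{def2}. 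Then $\underline{q}^{-}$ subtracts $1$ from the last (smallest) part $2m_1+1$, producing $2m_1$; one verifies, by the explicit $\Sp$-collapse recipe of Theorem 6.3.8 of \cite{CM93}, that $(\underline{q}^{-})_{\Sp_{2n}}$ equals $[(\prod_j n_j^{2})(\prod_{(\chi,m,\alpha)} m^{2})(\prod_{i=2}^{l}(2m_i+1))_{\Sp}(2m_1)]$, since the extra even parts $n_j^{2}$ and $m^{2}$ do not disturb the odd-pair replacements made in the proof of Theorem \ref{thmub2}. Transposing and collapsing again yields exactly the partition $\underline{p}_1$ in the statement.

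The main obstacle I expect is the bookkeeping in the induction/reduction of Part (1): one must verify that at every stage of the Fourier-Jacobi peeling, the ``maximality" hypothesis needed to invoke Lemma \ref{lemv1} or \ref{lemv2} remains satisfied (this is exactly the routine check flagged in the discussion surrounding \eqref{sec4equ17} in the proof of Theorem \ref{thmub2}), and that the induced representation remaining after the peeling is again of the form \eqref{sec4equ1} so that Lemma \ref{lemv1} continues to apply. The cleanest way to manage this is to induct on the total number of GL-blocks plus the rank of the residual strongly negative factor, using the unramified constituent equivalence \cite[Example 5.4]{Kud96} at each step to replace the intermediate Fourier-Jacobi module by a genuine induced representation of the form required, exactly as is done between \eqref{sec4equ14} and \eqref{sec4equ15} in the proof of Theorem \ref{thmub2}.
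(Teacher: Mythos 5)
The reduction plan for Part (1) does not work as stated. You propose to apply the Fourier--Jacobi operations of Propositions \ref{prop1} and \ref{prop2} until all the $\GL$-blocks coming from $\mathbf{e}$ and the $(\chi_j,n_j)$'s have been ``peeled off,'' and then apply Theorem \ref{thmub2} to the residual Type I strongly negative representation. But the Fourier--Jacobi operation $FJ_{\psi_{r-1}^a}$ reduces \emph{every} $\GL$-block $\chi({\det}_m)$ to $\chi({\det}_{m-1})$ simultaneously (and also shrinks the $\Sp_{2m_1}$ factor via the unramified equivalence at step two), so you cannot selectively strip off the $\mathbf{e}$- and $n_j$-blocks while leaving the strongly negative data intact. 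By the time those blocks have dropped to zero, the strongly negative factor's constituents $1_{\det_{m_i+m_{i+1}+1}}$ and $1_{\Sp_{2m_1}}$ will also have been reduced by the same number of steps; what remains is not $\sigma_{sn}$ on some $\Sp_{2m_1'}$. The paper's argument instead proceeds by induction on $n$: after exactly $2m_2+2$ Fourier--Jacobi steps (and one application of Kudla's unramified constituent equivalence to regroup the leftover $\Sp_0$ with the smallest $1_{\det}$-block into $\Sp_{2m_3-2m_2-2}$), one arrives at a \emph{smaller Type III} unramified representation, to which the induction hypothesis of Theorem \ref{thmub1} itself applies --- not Theorem \ref{thmub2}. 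The base of the induction is $n=1$, not a strongly negative case.

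The Part (2) computation also has a gap. You refer to $n_j^2$ and $m^2$ as ``extra even parts,'' but $n_j$ and $m$ need not be even integers --- they have even \emph{multiplicity}, which is a different thing. When $n_j$ or $m$ is odd and sits interleaved among the $(2m_i+1)$'s, it genuinely participates in the odd-pair replacements of the $\Sp$-collapse recipe, so the claim that these parts ``do not disturb'' the computation is not justified; the paper's proof has to split into subcases precisely because of this. Moreover, the identity you assert, $(\ul{q}^-)_{\Sp} = [(\prod_j n_j^2)(\prod_{\mathbf{e}} m^2)(\prod_{i\ge 2}(2m_i+1))_{\Sp}(2m_1)]$, is missing the $\Sp$-expansion that appears on the right-hand side of \eqref{sec5equ14}: the correct statement equates $(\ul{q}^-)_{\Sp}$ with the $\Sp$-\emph{expansion} of that bracket, and the paper goes through the identity $(\ul{p}^t)_{\Sp}=(\ul{p}^{\Sp})^t$ from Theorem 6.3.11 of \cite{CM93} to reconcile the two. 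Without the expansion, the equality can fail when the bracketed symplectic partition is not already special.
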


\begin{proof}
\textbf{Proof of Part (1).}
We prove by induction on $n$. When $n=1$, then $k=0$, and either $m_1=0$ or $m_1=1$.
If $m_1=0$, then by Part (1) of Proposition \ref{prop1},
$\ul{p}_1=[1^2]^t=[2]$.
If $m_0=1$, then $\ul{p}_1=[2]^t=[1^2]$.
Therefore, Part (1) is true for $n=1$.
We assume that the result is true for any $n' < n$.

Since $l$ is odd, we assume that $l=2s+1$. By the assumption of the theorem, $\sigma_v$ is the unique unramified constituent of the following induced representation:
\begin{align*}
\begin{split}
\rho
:=  \ & \times_{(\chi, m, \alpha) \in \textbf{e}}
v^{\alpha} \chi({\det}_m) \times \times_{j=1}^t \chi_j ({\det}_{{n_j}})\\
& \times
\nu^{\frac{m_{2s}-m_{2s+1}}{2}} 1_{{\det}_{m_{2s}+m_{2s+1}+1}}
\times
\nu^{\frac{m_{2s-2}-m_{2s-1}}{2}} 1_{{\det}_{m_{2s-2}+m_{2s-1}+1}}\\
& \times \cdots \times
\nu^{\frac{m_{2}-m_3}{2}} 1_{{\det}_{m_{2}+m_3+1}}
\rtimes 1_{\Sp_{2m_1}}.
\end{split}
\end{align*}

We assume that
$$[(\prod_{j=1}^t n_j^2) (\prod_{(\chi,m,\alpha) \in \textbf{e}} m^2)]^t = [(2q_1)(2q_2)\cdots(2q_r)],$$
where $2q_1 \geq 2q_2 \geq \cdots \geq 2q_r$. If $r < 2m_2+2$, then we let $q_{r+1} = \cdots q_{2m_2+2} = 0$.
By Proposition \ref{prop2},
\begin{align*}
\begin{split}
\rho_1
:=  \ & FJ_{\psi_{q_1+s-1}^1}(\rho)\\
=  \ & \mu_{\psi^{-1}} \times_{(\chi, m, \alpha) \in \textbf{e}}
v^{\alpha} \chi({\det}_{m-1}) \times \times_{j=1}^t \chi_j ({\det}_{{n_j-1}})\\
& \times
\nu^{\frac{m_{2s}-m_{2s+1}}{2}} 1_{{\det}_{m_{2s}+m_{2s+1}}}
\times
\nu^{\frac{m_{2s-2}-m_{2s-1}}{2}} 1_{{\det}_{m_{2s-2}+m_{2s-1}}}\\
& \times \cdots \times
\nu^{\frac{m_{2}-m_3}{2}} 1_{{\det}_{m_{2}+m_3}}
\rtimes (1_{\Sp_{2m_1}} \otimes \omega_{\psi^{-1}}),
\end{split}
\end{align*}
where we make a convention that a term $v^{a} \chi({\det}_{b-1})$ will be omitted from the induced representation if $b-1 \leq 0$. From now on, we will follow this convention.

Similarly as in the proof of Theorem \ref{thmub2},
by Lemma \ref{lemv2}, $J_{\psi_{r-1}^{\alpha}}(\rho) \equiv 0$, for any $r \geq q_1 + s +1$ and any $\alpha \in F^*/(F^*)^2$,
and $J_{\psi_{(2r+1)^2}}(\rho) \equiv 0$, for any $r \geq q_1 + s$ if $m_1=0$, or, if $m_1 > 0$ and some $m$ or $n_j$ is $1$;
 for any $r \geq q_1 + s +1$ if $m_1 >0$ and all $m, n_j$'s are bigger than 1.
Note that all $m, n_j$'s are bigger than 1 if and only if $2q_1 = 2q_2$.
Therefore, $J_{\psi_{(2r+1)^2}}(\rho) \equiv 0$, for any $r \geq q_1 + s$ if $m_1=0$, or, if $2q_1 \geq 2q_2+2$;
 for any $r \geq q_1 + s +1$ if $m_1 >0$ and $2q_1 = 2q_2$.

Therefore, $[(2q_1+2s)1^{2n-2q_1-2s}]$ is the maximal partition of the type $[(2r)1^{2n-2r}]$ with respect to which $\rho$ can have a nonzero Fourier-Jacobi module, in this single step. We need to do this routine
by checking about the ``maximality" using Lemma \ref{lemv1} or Lemma \ref{lemv2}, every time we apply Proposition \ref{prop1} or Proposition \ref{prop2}. We will omit this part in the following steps.

By \cite{Kud96},
the unique unramified component of
$\rho_1$ is the same as the unique unramified component of the following induced representation:
\begin{align*}
\rho_1'
:=  \ & \mu_{\psi^{-1}} \times_{(\chi, m, \alpha) \in \textbf{e}}
v^{\alpha} \chi({\det}_{m-1}) \times \times_{j=1}^t \chi_j ({\det}_{{n_j-1}})\\
& \times
\nu^{\frac{m_{2s}-m_{2s+1}}{2}} 1_{{\det}_{m_{2s}+m_{2s+1}}}
\times
\nu^{\frac{m_{2s-2}-m_{2s-1}}{2}} 1_{{\det}_{m_{2s-2}+m_{2s-1}}}\\
& \times \cdots \times
\nu^{\frac{m_{2}-m_3}{2}} 1_{{\det}_{m_{2}+m_3}}
\times \nu^{\frac{-m_1}{2}} 1_{{\det}_{m_1}}
\rtimes 1_{\wt{\Sp}_{0}}.
\end{align*}
By Part (2) of Proposition \ref{prop2},
\begin{align*}
\begin{split}
\rho_2
:=  \ & FJ_{\psi_{q_2+s}^{-1}}(\rho_1')\\
=  \ & \times_{(\chi, m, \alpha) \in \textbf{e}}
v^{\alpha} \chi({\det}_{m-2}) \times \times_{j=1}^t \chi_j ({\det}_{{n_j-2}})\\
& \times
\nu^{\frac{m_{2s}-m_{2s+1}}{2}} 1_{{\det}_{m_{2s}+m_{2s+1}-1}}
\times
\nu^{\frac{m_{2s-2}-m_{2s-1}}{2}} 1_{{\det}_{m_{2s-2}+m_{2s-1}-1}}\\
& \times \cdots \times
\nu^{\frac{m_{2}-m_3}{2}} 1_{{\det}_{m_{2}+m_3-1}} \times
\nu^{\frac{-m_1}{2}} 1_{{\det}_{m_1-1}}
\rtimes 1_{\Sp_{0}}.
\end{split}
\end{align*}
It is easy to see that we can repeat the above 2-step-procedure $m_1-1$ more times, then we get the following induced representation:
\begin{align*}
& \rho_{2m_1}\\
:=  \ & FJ_{\psi_{q_{2m_1}+s}^{-1}}(\rho_{2m_1-1}')\\
=  \ & \times_{(\chi, m, \alpha) \in \textbf{e}}
v^{\alpha} \chi({\det}_{m-2m_1}) \times \times_{j=1}^t \chi_j ({\det}_{{n_j-2m_1}})\\
& \times
\nu^{\frac{m_{2s}-m_{2s+1}}{2}} 1_{{\det}_{m_{2s}+m_{2s+1}-2m_1+1}}
\times
\nu^{\frac{m_{2s-2}-m_{2s-1}}{2}} 1_{{\det}_{m_{2s-2}+m_{2s-1}-2m_1+1}}\\
& \times \cdots \times
\nu^{\frac{m_{2}-m_3}{2}} 1_{{\det}_{m_{2}+m_3-2m_1+1}}
\rtimes 1_{\Sp_{0}}.
\end{align*}
Then, we continue with $\rho_{2m_1}$.
By Part (1) of Proposition \ref{prop1},
\begin{align*}
& \rho_{2m_1+1}\\
:=  \ & FJ_{\psi_{q_{2m_1+1}+s-1}^{1}}(\rho_{2m_1})\\
=  \ & \mu_{\psi^{-1}} \times_{(\chi, m, \alpha) \in \textbf{e}}
v^{\alpha} \chi({\det}_{m-2m_1-1}) \times \times_{j=1}^t \chi_j ({\det}_{{n_j-2m_1-1}})\\
& \times
\nu^{\frac{m_{2s}-m_{2s+1}}{2}} 1_{{\det}_{m_{2s}+m_{2s+1}-2m_1}}
\times
\nu^{\frac{m_{2s-2}-m_{2s-1}}{2}} 1_{{\det}_{m_{2s-2}+m_{2s-1}-2m_1}}\\
& \times \cdots \times
\nu^{\frac{m_{2}-m_3}{2}} 1_{{\det}_{m_{2}+m_3-2m_1}}
\rtimes 1_{\wt{\Sp}_{0}}.
\end{align*}
By Part (2) of Proposition \ref{prop1},
\begin{align*}
& \rho_{2m_1+2}\\
:=  \ & FJ_{\psi_{q_{2m_1+2}+s-1}^{-1}}(\rho_{2m_1+1})\\
=  \ & \times_{(\chi, m, \alpha) \in \textbf{e}}
v^{\alpha} \chi({\det}_{m-2m_1-2}) \times \times_{j=1}^t \chi_j ({\det}_{{n_j-2m_1-2}})\\
& \times
\nu^{\frac{m_{2s}-m_{2s+1}}{2}} 1_{{\det}_{m_{2s}+m_{2s+1}-2m_1-1}}
\times
\nu^{\frac{m_{2s-2}-m_{2s-1}}{2}} 1_{{\det}_{m_{2s-2}+m_{2s-1}-2m_1-1}}\\
& \times \cdots \times
\nu^{\frac{m_{2}-m_3}{2}} 1_{{\det}_{m_{2}+m_3-2m_1-1}}
\rtimes 1_{{\Sp}_{0}}.
\end{align*}

It is easy to see that after repeating the above 2-step-procedure $m_2-m_1+1$ more times, we get the following induced representation:
\begin{align*}
& \rho_{2m_2+2}\\
:=  \ & FJ_{\psi_{q_{2m_2+2}+s-1}^{-1}}(\rho_{2m_2+1})\\
=  \ & \times_{(\chi, m, \alpha) \in \textbf{e}}
v^{\alpha} \chi({\det}_{m-2m_2-2}) \times \times_{j=1}^t \chi_j ({\det}_{{n_j-2m_2-2}})\\
& \times
\nu^{\frac{m_{2s}-m_{2s+1}}{2}} 1_{{\det}_{m_{2s}+m_{2s+1}-2m_2-1}}
\times
\nu^{\frac{m_{2s-2}-m_{2s-1}}{2}} 1_{{\det}_{m_{2s-2}+m_{2s-1}-2m_2-1}}\\
& \times \cdots \times
\nu^{\frac{m_{2}-m_3}{2}} 1_{{\det}_{m_{2}+m_3-2m_2-1}}
\rtimes 1_{{\Sp}_{0}},
\end{align*}
whose unramified component is the same as that of the following induced representation
\begin{align*}
& \rho'\\
:=  \ & \times_{(\chi, m, \alpha) \in \textbf{e}}
v^{\alpha} \chi({\det}_{m-2m_2-2}) \times \times_{j=1}^t \chi_j ({\det}_{{n_j-2m_2-2}})\\
& \times
\nu^{\frac{m_{2s}-m_{2s+1}}{2}} 1_{{\det}_{m_{2s}+m_{2s+1}-2m_2-1}}
\times
\nu^{\frac{m_{2s-2}-m_{2s-1}}{2}} 1_{{\det}_{m_{2s-2}+m_{2s-1}-2m_2-1}}\\
& \times \cdots \times
\nu^{\frac{m_{4}-m_{5}}{2}} 1_{{\det}_{m_{4}+m_{5}-1-2m_2}}
\rtimes 1_{{\Sp}_{2m_3-2m_2-2}}.
\end{align*}
By Theorem \ref{thm10}, $\rho'$ has a unique unitary unramified representation $\sigma'$ which corresponds to the following set of data:
\begin{align*}
& \{(\chi, m-2m_2-2, \alpha): (\chi, m, \alpha)\in \textbf{e}\} \\
\cup  \ & \{(\chi_j, n_j-2m_2-2):1 \leq j \leq t\}\\
\cup  \ & \{(1_{\GL_1}, 2m_3-2m_2-1), (1_{\GL_1}, 2m_4-2m_2-1),\\
& \ldots, (1_{\GL_1}, 2m_{2s+1}-2m_2-1)\},
\end{align*}
where terms $(\chi, m-2m_2-2, \alpha)$ or $(\chi_j, n_j-2m_2-2)$ will be omitted if $m-2m_2-2 \leq 0$ or $n_j-2m_2-2 \leq 0$.

Note that in general, it is not easy to figure out the exact corresponding data for the unique unramified component of $\rho_{2i}$, $1 \leq i \leq m_2$.

By induction assumption, for any irreducible unitary automorphic representation $\pi'$ of $\Sp_{2m}(\BA)$ which has the unique strongly negative unramified constituent of $\sigma'_v$ as a local component, and
for any symplectic partition $\ul{p}$ of $2m$ with
\begin{align*}
\ul{p} >  \ & ([(\prod_{j=1}^t (n_j-2m_2-2)^2) (\prod_{(\chi,m,\alpha) \in \textbf{e}} (m-2m_2-2)^2) \\
& \cdot (\prod_{i=4}^l (2m_i-2m_2-1))_{Sp}(2m_3-2m_2-2)]^t)_{\Sp},
\end{align*}
$\pi'$ has no non-vanishing Fourier coefficients attached to $\ul{p}$.

From the above discussion, we have the following composite partition
\begin{align}\label{sec5equ9}
\begin{split}
& [(2q_1+2s)1^{2n-2q_1-2s}]\circ [(2q_2+2s+2)1^{2n-\sum_{i=1}^{2}2q_i-2(2s+1)}] \circ \cdots \\
\circ  \ & [(2q_{2m_1-1}+2s)
1^{2n-\sum_{i=1}^{2m_1-1}2q_i-2m_1(2s+1)+2s+2}]\\
\circ  \ & [(2q_{2m_1}+2s+2)
1^{2n-\sum_{i=1}^{2m_1}2q_i-2m_1(2s+1)}]\\
\circ  \ &  [(2q_{2m_1+1}+2s)
1^{2n-\sum_{i=1}^{2m_1+1}2q_i-2m_1(2s+1)-2s}]\circ \cdots \\
\circ  \ & [(2q_{2m_2+2}+2s)
1^{2n-\sum_{i=1}^{2m_2+1}2q_i-(2m_2+2)2s-2m_1}]\\
\circ  \ & ([(\prod_{j=1}^t (n_j-2m_2-2)^2) (\prod_{(\chi,m,\alpha) \in \textbf{e}} (m-2m_2-2)^2) \\
\cdot  \ & (\prod_{i=4}^l (2m_i-2m_2-1))_{Sp}(2m_3-2m_2-2)]^t)_{\Sp},
\end{split}
\end{align}
which may provide non-vanishing Fourier coefficients for $\pi$.

For the partition
$$\ul{p}_1 := ([(\prod_{j=1}^t n_j^2) (\prod_{(\chi,m,\alpha) \in \textbf{e}} m^2) (\prod_{i=2}^l (2m_i+1))_{\Sp}(2m_1)]^t)_{\Sp},$$
since $2m_1+1<2m_2+1<\cdots<2m_{2s+1}+1$,
\begin{align*}
&(\prod_{i=2}^l (2m_i+1))_{\Sp}\\
=  \ & [(2m_{2s+1})(2m_{2s}+2)\cdots(2m_5)(2m_4+2)(2m_3)(2m_2+2)].
\end{align*}
Therefore
\begin{align*}
\ul{p}_2
:=  \ & [(\prod_{j=1}^t n_j^2) (\prod_{(\chi,m,\alpha) \in \textbf{e}} m^2) (\prod_{i=2}^l (2m_i+1))_{\Sp}(2m_1)]^t\\
=  \ & [(\prod_{j=1}^t n_j^2) (\prod_{(\chi,m,\alpha) \in \textbf{e}} m^2)]^t\\
& + [(2m_{2s+1})(2m_{2s}+2)\cdots(2m_5)(2m_4+2)(2m_3)(2m_2+2)(2m_1)]^t.
\end{align*}
By calculating the transpose and the addition, we obtain
\begin{align*}
\ul{p}_2=  \ & [(2q_1)(2q_2)\cdots(2q_r)] +
[1^{2m_{2s+1}}]+[1^{2m_{2s}+2}]\\
& + \cdots + [1^{2m_5}] + [1^{2m_4+2}]
+ [1^{2m_3}] + [1^{2m_2+2}][1^{2m_1}]\\
=  \ & [(2q_1+2s+1)\cdots(2q_{2m_1}+2s+1)\\
& \cdot
(2q_{2m_1+1}+2s)\cdots(2q_{2m_2+2}+2s)(\ul{p}_3)],
\end{align*}
where
\begin{align*}
\ul{p}_3
=  \ & [(2q_{2m_2+3})(2q_{2m_2+4})\cdots(2q_r)] +
[1^{2m_{2s+1}-2m_2-2}]+[1^{2m_{2s}-2m_2}]\\
& + \cdots + [1^{2m_5-2m_2-2}] + [1^{2m_4-2m_2}]
+ [1^{2m_3-2m_2-2}]\\
=  \ & [(\prod_{j=1}^t (n_j-2m_2-2)^2) (\prod_{(\chi,m,\alpha) \in \textbf{e}} (m-2m_2-2)^2) \\
& \cdot (\prod_{i=4}^l (2m_i-2m_2-1))_{\Sp}(2m_3-2m_2-2)]^t.
\end{align*}

By the recipe for symplectic collapse
(see Theorem 6.3.8 of \cite{CM93}, also the beginning of Section 4)
\begin{align}\label{sec5equ11}
\begin{split}
\ul{p}_1
= (\ul{p}_2)_{\Sp}
=  \ & [(2q_1+2s+1)\cdots(2q_{2m_1}+2s+1)\\
& \cdot
(2q_{2m_1+1}+2s)\cdots(2q_{2m_2+2}+2s)(\ul{p}_3)]_{\Sp}\\
=  \ & [((2q_1+2s+1)\cdots(2q_{2m_1}+2s+1)\\
& \cdot
(2q_{2m_1+1}+2s)\cdots(2q_{2m_2+2}+2s))_{\Sp}
(\ul{p}_3)_{\Sp}]\\
=  \ & [((2q_1+2s+1)\cdots(2q_{2m_1}+2s+1))_{\Sp}\\
& \cdot
(2q_{2m_1+1}+2s)\cdots(2q_{2m_2+2}+2s)
(\ul{p}_3)_{\Sp}].
\end{split}
\end{align}

Now, let us come back to the composition partition in \eqref{sec5equ9}.
By Lemma 3.1 of \cite{JL15a} or Lemma 2.6 of \cite{GRS03}, if the composite partition in \eqref{sec5equ9} provides non-vanishing Fourier coefficients for $\pi$, then so is the following composite partition
\begin{align}\label{sec5equ10}
\begin{split}
& [(2q_1+2s)1^{2n-2q_1-2s}]\circ [(2q_2+2s+2)1^{2n-\sum_{i=1}^{2}2q_i-2(2s+1)}] \circ \cdots \\
\circ  \ & [(2q_{2m_1-1}+2s)
1^{2n-\sum_{i=1}^{2m_1-1}2q_i-2m_1(2s+1)+2s+2}]\\
\circ  \ & [(2q_{2m_1}+2s+2)
1^{2n-\sum_{i=1}^{2m_1}2q_i-2m_1(2s+1)}]\\
\circ  \ &  [(2q_{2m_1+1}+2s) \cdots  (2q_{2m_2+2}+2s)\\
\cdot  \ & (((\prod_{j=1}^t (n_j-2m_2-2)^2) (\prod_{(\chi,m,\alpha) \in \textbf{e}} (m-2m_2-2)^2) \\
\cdot  \ & (\prod_{i=4}^l (2m_i-2m_2-1))_{Sp}(2m_3-2m_2-2))^t)_{\Sp}],
\end{split}
\end{align}
which can be expressed as the following partition
\begin{align}
\begin{split}
& [(2q_1+2s)1^{2n-2q_1-2s}]\circ [(2q_2+2s+2)1^{2n-\sum_{i=1}^{2}2q_i-2(2s+1)}] \circ \cdots \\
\circ  \ & [(2q_{2m_1-1}+2s)
1^{2n-\sum_{i=1}^{2m_1-1}2q_i-2m_1(2s+1)+2s+2}]\\
\circ  \ & [(2q_{2m_1}+2s+2)
1^{2n-\sum_{i=1}^{2m_1}2q_i-2m_1(2s+1)}]\\
\circ  \ & [(2q_{2m_1+1}+2s) \cdots  (2q_{2m_2+2}+2s) (\ul{p}_3)_{\Sp})].
\end{split}
\end{align}

Comparing the partitions in \eqref{sec5equ11} and \eqref{sec5equ10},
and applying Lemma 3.1 and Proposition 3.3 of \cite{JL15a} repeatedly,
we want to show that
if the following composite partition
\begin{align*}
& [(2q_1+2s)1^{2n-2q_1-2s}]\circ [(2q_2+2s+2)1^{2n-\sum_{i=1}^{2}2q_i-2(2s+1)}] \circ \cdots \\
\circ  \ & [(2q_{2m_1-1}+2s)
1^{2n-\sum_{i=1}^{2m_1-1}2q_i-2m_1(2s+1)+2s+2}]\\
\circ  \ & [(2q_{2m_1}+2s+2)
1^{2n-\sum_{i=1}^{2m_1}2q_i-2m_1(2s+1)}]
\end{align*}
provides non-vanishing Fourier coefficients for $\pi$,
then
so is the partition $[(2q_1+2s+1)\cdots(2q_{2m_1}+2s+1)
1^{2n-\sum_{i=1}^{2m_1}2q_i-2m_1(2s+1)}]_{\Sp}$.

We consider each pair $(2q_{2i-1}+2s, 2q_{2i}+2s+2)$, for $1 \leq i \leq m_1$. When $2q_{2i-1}=2q_{2i}$, by Proposition 3.2 of \cite{JL15a},
the composite partition $[(2q_{2i}+2s)1^{2d_i-2q_{2i}-2s}]\circ [(2q_{2i}+2s+2)1^{2d_i-2q_{2i}-2(2s+1)}]$ provides non-vanishing Fourier coefficients for an irreducible automorphic representation $\tau_i$ of $\Sp_{2d_i}(\BA)$, where $2d_i = 2n-\sum_{j=1}^{2i-2} 2q_j-(2i-2)(2s+1)$,
if and only if the partition $[(2q_{2i}+2s+1)^21^{2d_i-2(2q_{2i}+2s+1)}]$
provides non-vanishing Fourier coefficients for $\tau_i$.
When $2q_{2i-1} \geq 2q_{2i}+2$,
by Lemma 3.1 of \cite{JL15a},
if
$$[(2q_{2i-1}+2s)1^{2d_i-2q_{2i-1}-2s}]\circ [(2q_{2i}+2s+2)1^{2d_i-q_{2i-1} -q_{2i}-2(2s+1)}]$$ provides non-vanishing Fourier coefficients for $\tau_i$,
then so is
$$
[(2q_{2i-1}+2s)(2q_{2i}+2s+2)1^{2d_i-q_{2i-1} -q_{2i}-2(2s+1)}].
$$

By the recipe for symplectic collapse
(see Theorem 6.3.8 of \cite{CM93}, also the beginning of Section 4), after considering all the pairs $(2q_{2i-1}+2s, 2q_{2i}+2s+2)$,
$1 \leq i \leq m_1$, replacing
$[(2q_{2i-1}+2s)1^{2d_i-2q_{2i-1}-2s}]\circ [(2q_{2i}+2s+2)1^{2d_i-q_{2i-1} -q_{2i}-2(2s+1)}]$ by $[(2q_{2i}+2s+1)^21^{2d_i-2(2q_{2i}+2s+1)}]$ if $2q_{2i-1}=2q_{2i}$,
by $[(2q_{2i-1}+2s)(2q_{2i}+2s+2)1^{2d_i-q_{2i-1} -q_{2i}-2(2s+1)}]$ if $2q_{2i-1} \geq 2q_{2i}+2$, and applying Lemma 3.1 and Proposition 3.3 of \cite{JL15a} repeatedly, we will get a partition which is exactly 
$$
[(2q_1+2s+1)\cdots(2q_{2m_1}+2s+1)1^{2n-\sum_{i=1}^{2m_1}2q_i-2m_1(2s+1)}]_{\Sp},
$$
providing non-vanishing Fourier coefficients for $\pi$.
Therefore, from the partition in \eqref{sec5equ10}, we get exactly the partition $\ul{p}_1$.


Using a similar argument as in the proof of Theorem \ref{thmub2}, we can conclude that for any symplectic partition $\ul{p}>\ul{p}_1$, $\pi$ has no nonzero Fourier coefficients attached to the partition $\ul{p}$, in particular, $\ul{p} \notin \mathfrak{p}^m(\pi)$.


\textbf{Proof of Part (2).}\
By Definition \ref{def2},
\begin{align}\label{sec5equ12}
\begin{split}
& \eta_{\frak{so}_{2n+1}(\BC), \frak{sp}_{2n}(\BC)}([(\prod_{j=1}^t n_j^2) (\prod_{(\chi,m,\alpha) \in \textbf{e}} m^2) (\prod_{i=1}^l (2m_i+1))])\\
=  \ & (([(\prod_{j=1}^t n_j^2) (\prod_{(\chi,m,\alpha) \in \textbf{e}} m^2) (\prod_{i=1}^l (2m_i+1))]^-)_{\Sp})^t.
\end{split}
\end{align}
On the other hand, from the proof of Theorem 6.3.11 of \cite{CM93}, it is easy to see that
\begin{align}\label{sec5equ13}
\begin{split}
& ([(\prod_{j=1}^t n_j^2) (\prod_{(\chi,m,\alpha) \in \textbf{e}} m^2) (\prod_{i=2}^l (2m_i+1))_{\Sp}(2m_1)]^t)_{\Sp}\\
=  \ & ([(\prod_{j=1}^t n_j^2) (\prod_{(\chi,m,\alpha) \in \textbf{e}} m^2) (\prod_{i=2}^l (2m_i+1))_{\Sp}(2m_1)]^{\Sp})^t.
\end{split}
\end{align}
Comparing the right hand sides of \eqref{sec5equ12} and \eqref{sec5equ13}, we only need to show that
\begin{align}\label{sec5equ14}
\begin{split}
& ([(\prod_{j=1}^t n_j^2) (\prod_{(\chi,m,\alpha) \in \textbf{e}} m^2) (\prod_{i=1}^l (2m_i+1))]^-)_{\Sp}\\
=  \ & [(\prod_{j=1}^t n_j^2) (\prod_{(\chi,m,\alpha) \in \textbf{e}} m^2) (\prod_{i=2}^l (2m_i+1))_{\Sp}(2m_1)]^{\Sp}.
\end{split}
\end{align}

Let $J=\{1, 2, \ldots ,t\}$.
We rewrite the partition $$[(\prod_{j=1}^t n_j^2) (\prod_{(\chi,m,\alpha) \in \textbf{e}} m^2) (\prod_{i=1}^l (2m_i+1))]$$
as follows:
$$[(\prod_{j \in J_0}  n_j^2) (\prod_{(\chi,m,\alpha) \in \textbf{e}_0} m^2) (\prod_{i=1}^l (2m_i+1)^{f_i}(\prod_{j \in J_i}  n_j^2) (\prod_{(\chi,m,\alpha) \in \textbf{e}_i} m^2))],$$
such that $n_j, m \geq 2m_l+1$, for $j \in J_0$, $(\chi,m,\alpha) \in \textbf{e}_0$;
and $2m_{i-1} < n_j, m < 2m_i+1$, for $j \in J_i$, $(\chi,m,\alpha) \in \textbf{e}_i$, where we let $m_0=-1$;
and $f_i \geq 1$ odd, for $1 \leq i \leq l$.

As in the proof of Part (1), we still let $l=2s+1$.
Since $2m_1+1<2m_2+1<\cdots<2m_{2s+1}+1$, by the recipe for symplectic collapse
(see Theorem 6.3.8 of \cite{CM93}, also the beginning of Section 4),
\begin{align*}
& (\prod_{i=2}^l (2m_i+1))_{\Sp}\\
=  \ & [(2m_{2s+1})(2m_{2s}+2)\cdots(2m_5)(2m_4+2)(2m_3)(2m_2+2)].
\end{align*}

Then
\begin{align}\label{sec5equ15}
\begin{split}
& [(\prod_{j=1}^t n_j^2) (\prod_{(\chi,m,\alpha) \in \textbf{e}} m^2) (\prod_{i=2}^l (2m_i+1))_{\Sp}(2m_1)]^{\Sp}\\
=  \ & [(\prod_{j \in J_0}  n_j^2) (\prod_{(\chi,m,\alpha) \in \textbf{e}_0} m^2) \\
& \cdot (\prod_{i=1}^s
((2m_{2i+1}+1)^{f_{2i+1}-1}(2m_{2i+1})(\prod_{j \in J_{2i+1}}  n_j^2) (\prod_{(\chi,m,\alpha) \in \textbf{e}_{2i+1}} m^2)\\
& \cdot (2m_{2i}+2)(2m_{2i}+1)^{f_{2i}-1}(\prod_{j \in J_{2i}}  n_j^2) (\prod_{(\chi,m,\alpha) \in \textbf{e}_{2i}} m^2)))\\
& \cdot (2m_1+1)^{f_1-1} (2m_1) (\prod_{j \in J_i}  n_j^2) (\prod_{(\chi,m,\alpha) \in \textbf{e}_i} m^2)]^{\Sp}.
\end{split}
\end{align}

It is easy to see that during the operations of $[\,]^-$, $\Sp$-collapse and $\Sp$-expansion, the part of $[(\prod_{j \in J_0}  n_j^2) (\prod_{(\chi,m,\alpha) \in \textbf{e}_0} m^2)]$ will not change. Therefore, we only need to show that
\begin{align}\label{sec5equ16}
\begin{split}
& ([\prod_{i=1}^{2s+1} (2m_i+1)^{f_i}(\prod_{j \in J_i}  n_j^2) (\prod_{(\chi,m,\alpha) \in \textbf{e}_i} m^2)]^-)_{\Sp}\\
=  \ & [(\prod_{i=1}^s
((2m_{2i+1}+1)^{f_{2i+1}-1}(2m_{2i+1})(\prod_{j \in J_{2i+1}}  n_j^2) (\prod_{(\chi,m,\alpha) \in \textbf{e}_{2i+1}} m^2)\\
& \cdot (2m_{2i}+2)(2m_{2i}+1)^{f_{2i}-1}(\prod_{j \in J_{2i}}  n_j^2) (\prod_{(\chi,m,\alpha) \in \textbf{e}_{2i}} m^2)))\\
& \cdot (2m_1+1)^{f_1-1} (2m_1) (\prod_{j \in J_i}  n_j^2) (\prod_{(\chi,m,\alpha) \in \textbf{e}_i} m^2)]^{\Sp}.
\end{split}
\end{align}

For $1 \leq i \leq 2s+1$,
write the partition
$$[(2m_i+1)^{f_i}(\prod_{j \in J_i}  n_j^2) (\prod_{(\chi,m,\alpha) \in \textbf{e}_i} m^2)]$$
as $[(2m_i+1)^{f_i}p_{i,1}^{2g_{i,1}}\cdots p_{i,r_i}^{2g_{i,r_i}}]$,
with $2m_i+1 > 2m_i \geq p_{i,1} > \cdots >p_{i,r_i}$.

We need to consider two cases: Case (1),
$p_{1,r_1} = 2q_{1,r_1}+1$, odd; and Case (2), $p_{1,r_1} = 2q_{1,r_1}$, even.

For Case (1).
\begin{align}\label{sec5equ17}
\begin{split}
& ([\prod_{i=1}^{2s+1} (2m_i+1)^{f_i}(\prod_{j \in J_i}  n_j^2) (\prod_{(\chi,m,\alpha) \in \textbf{e}_i} m^2)]^-)_{\Sp}\\
=  \ & [\prod_{i=2}^{2s+1} (2m_i+1)^{f_i}p_{i,1}^{2g_{i,1}}\cdots p_{i,r_i}^{2g_{i,r_i}}\\
& \cdot (2m_1+1)^{f_1}p_{1,1}^{2g_{1,1}}\cdots
p_{1,r_1-1}^{2g_{1,r_1-1}} (2q_{1,r_1}+1)^{2g_{1,r_1}-1}(2q_{1,r_1})]_{\Sp}.
\end{split}
\end{align}
For $2 \leq i \leq 2s+1$, assume that all the odd parts in $\{p_{i,1}, \ldots, p_{i,r_i}\}$ are
$\{(2q_{i,1}+1), \ldots, (2q_{i,t_i}+1)\}$, with
$2q_{i,1}+1 > \cdots > 2q_{i,t_i}+1$.
And assume that all the odd parts in $\{p_{1,1}, \ldots, p_{1,r_1-1}\}$ are
$\{(2q_{1,1}+1), \ldots, (2q_{1,t_1}+1)\}$, with
$2q_{1,1}+1 > \cdots > 2q_{1,t_1}+1$.
For $1 \leq i \leq 2s+1$, and $1 \leq j \leq t_i$, we assume that the exponent of $2q_{i,j}+1$ is $h_{i,j}$.
Then by the recipe in Theorem 6.3.8 of \cite{CM93} (see the beginning of Section 4), to get the $\Sp$-collapse in the right hand side of \eqref{sec5equ17}, we just have to do the following:
\begin{itemize}
\item for $0 \leq i \leq s$,
replace $(2m_{2i+1}+1)^{f_{2i+1}}(2m_{2i}+1)^{f_{2i}}$
by $(2m_{2i+1}+1)^{f_{2i+1}-1}(2m_{2i+1})(2m_{2i}+2)(2m_{2i}+1)^{f_{2i}-1}$,
and for $1 \leq j \leq t_{2i+1}$, replace
$(2q_{{2i+1},j}+1)^{h_{{2i+1},j}}$ by
$$(2q_{{2i+1},j}+2))(2q_{{2i+1},j}+1)^{h_{{2i+1},j}-2}(2q_{{2i+1},j});$$
\item replace $(2m_1+1)^{f_1}$ by $(2m_1+1)^{f_1-1}(2m_1)$, and
replace $(2q_{1,r_1}+1)^{2g_{1,r_1}-1}$
by $(2q_{1,r_1}+2)(2q_{1,r_1}+1)^{2g_{1,r_1}-2}$.
\end{itemize}
On the other hand,
by the recipe in Theorem 6.3.9 of \cite{CM93} (see the beginning of Section 4), to get the $\Sp$-expansion in the right hand side of \eqref{sec5equ16}, we just have to do the following:
\begin{itemize}
\item for $0 \leq i \leq s$,
$1 \leq j \leq t_{2i+1}$, replace
$(2q_{{2i+1},j}+1)^{h_{{2i+1},j}}$ by
$$(2q_{{2i+1},j}+2))(2q_{{2i+1},j}+1)^{h_{{2i+1},j}-2}(2q_{{2i+1},j});$$
\item replace $(2q_{1,r_1}+1)^{2g_{1,r_1}}$
by $(2q_{1,r_1}+2)(2q_{1,r_1}+1)^{2g_{1,r_1}-2}(2q_{1,r_1})$.
\end{itemize}
Therefore, we have proved the equality in \eqref{sec5equ16} for Case (1).

For Case (2).
\begin{align}\label{sec5equ18}
\begin{split}
& ([\prod_{i=1}^{2s+1} (2m_i+1)^{f_i}(\prod_{j \in J_i}  n_j^2) (\prod_{(\chi,m,\alpha) \in \textbf{e}_i} m^2)]^-)_{\Sp}\\
=  \ & [\prod_{i=2}^{2s+1} (2m_i+1)^{f_i}p_{i,1}^{2g_{i,1}}\cdots p_{i,r_i}^{2g_{i,r_i}}\\
& \cdot (2m_1+1)^{f_1}p_{1,1}^{2g_{1,1}}\cdots
p_{1,r_1-1}^{2g_{1,r_1-1}} (2q_{1,r_1})^{2g_{1,r_1}-1}(2q_{1,r_1}-1)]_{\Sp}.
\end{split}
\end{align}
As in Case (1),
for $2 \leq i \leq 2s+1$, assume that all the odd parts in $\{p_{i,1}, \ldots, p_{i,r_i}\}$ are
$\{(2q_{i,1}+1), \ldots, (2q_{i,t_i}+1)\}$, with
$2q_{i,1}+1 > \cdots > 2q_{i,t_i}+1$.
And assume that all the odd parts in $\{p_{1,1}, \ldots, p_{1,r_1-1}\}$ are
$\{(2q_{1,1}+1), \ldots, (2q_{1,t_1}+1)\}$, with
$2q_{1,1}+1 > \cdots > 2q_{1,t_1}+1$.
For $1 \leq i \leq 2s+1$, and $1 \leq j \leq t_i$, we assume that the exponent of $2q_{i,j}+1$ is $h_{i,j}$.
Then by the recipe in Theorem 6.3.8 of \cite{CM93} (see the beginning of Section 4), to get the $\Sp$-collapse in the right hand side of \eqref{sec5equ18}, we just have to do the following:
\begin{itemize}
\item for $0 \leq i \leq s$,
replace $(2m_{2i+1}+1)^{f_{2i+1}}(2m_{2i}+1)^{f_{2i}}$
by $(2m_{2i+1}+1)^{f_{2i+1}-1}(2m_{2i+1})(2m_{2i}+2)(2m_{2i}+1)^{f_{2i}-1}$,
and for $1 \leq j \leq t_{2i+1}$, replace
$(2q_{{2i+1},j}+1)^{h_{{2i+1},j}}$ by
$$(2q_{{2i+1},j}+2))(2q_{{2i+1},j}+1)^{h_{{2i+1},j}-2}(2q_{{2i+1},j});$$
\item replace $(2m_1+1)^{f_1}$ by $(2m_1+1)^{f_1-1}(2m_1)$, and
replace $(2q_{1,r_1}-1)$
by $(2q_{1,r_1})$.
\end{itemize}
On the other hand,
by the recipe in Theorem 6.3.9 of \cite{CM93} (also see the beginning of Section 4), to get the $\Sp$-expansion in the right hand side of \eqref{sec5equ16}, we just have to do the following:
\begin{itemize}
\item for $0 \leq i \leq s$, and
$1 \leq j \leq t_{2i+1}$, replace
$(2q_{{2i+1},j}+1)^{h_{{2i+1},j}}$ by
$$(2q_{{2i+1},j}+2))(2q_{{2i+1},j}+1)^{h_{{2i+1},j}-2}(2q_{{2i+1},j}).$$
\end{itemize}
Therefore, we also have proved the equality in \eqref{sec5equ16} for Case (2).
Hence, we have proved that
$$\ul{p}_1 = \eta_{\frak{so}_{2n+1}(\BC), \frak{sp}_{2n}(\BC)}([(\prod_{j=1}^t n_j^2) (\prod_{(\chi,m,\alpha) \in \textbf{e}} m^2) (\prod_{i=1}^l (2m_i+1))]).$$
This finishes the proof of Part (2), and completes the proof of the theorem.
\end{proof}

The proof of Theorem \ref{thmub1} easily implies the following corollary.

\begin{cor}\label{cor1}
Let $\pi$ be an irreducible unitary automorphic representation of $\Sp_{2n}(\BA)$ which has an unramified component $\sigma$ of \textbf{Type III} as in \ref{typeIII}. Then,
for any symplectic partition $\ul{p}$ of $2n$ which is bigger than the partition $\ul{p}_1$ in Theorem \ref{thmub1} under the lexicographical ordering,
$\pi$ has no non-vanishing Fourier coefficients attached to $\ul{p}$, in particular, $\ul{p} \notin \mathfrak{p}^m(\pi)$.
\end{cor}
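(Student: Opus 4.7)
The plan is to observe that the proof of Theorem \ref{thmub1} essentially operates with the lexicographical ordering already, even though the theorem was phrased with the natural (dominance) partial ordering. Recall that for two partitions one has $\ul{p} >_{\mathrm{dom}} \ul{q} \Rightarrow \ul{p} >_{\mathrm{lex}} \ul{q}$ but not conversely, so the corollary is a genuine strengthening, and what must be verified is that every step in the proof of Theorem \ref{thmub1} that invoked the dominance comparison in fact only needed the first-index comparison encoded by $>_{\mathrm{lex}}$.

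First, I would rewrite the induction argument verbatim, taking as input an arbitrary symplectic partition $\ul{p} = [p_1^{e_1} p_2^{e_2} \cdots p_r^{e_r}]$ of $2n$ with $\ul{p} >_{\mathrm{lex}} \ul{p}_1$, and writing $\ul{p}_1 = [q_1^{e'_1} q_2^{e'_2} \cdots q_t^{e'_t}]$. By definition of the lexicographical ordering, there is a unique index $i_0$ with $p_i = q_i$ for $i < i_0$ and $p_{i_0} > q_{i_0}$. This is exactly the starting condition used in the key case analysis of Theorem \ref{thmub1} (and earlier in Theorem \ref{thmub2}); no dominance hypothesis was ever invoked at that stage.

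Second, I would run the same successive applications of Fourier--Jacobi modules from Propositions \ref{prop1} and \ref{prop2} along the parts $q_1 = p_1, \ldots, q_{i_0-1} = p_{i_0-1}$, peeling them off one pair at a time exactly as in the proof of Theorem \ref{thmub1}. When the argument reaches the index $i_0$, the condition $p_{i_0} > q_{i_0}$ forces the corresponding $J_{\psi_{r-1}^{\alpha}}$ or $J_{\psi_{(2r+1)^2}}$ twisted Jacquet module to be of a type that vanishes by Lemma \ref{lemv1} or Lemma \ref{lemv2}, applied to the appropriate residual induced representation. This yields the vanishing of the relevant local twisted Jacquet module at the unramified place $v$, which in turn rules out the corresponding composite Fourier coefficient globally.

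Finally, I would translate the vanishing of the composite Fourier coefficient into the vanishing of the Fourier coefficient attached to $\ul{p}$ itself, by invoking \cite[Lemma 3.1]{JL15a}, \cite[Lemma 2.6]{GRS03} and \cite[Proposition 3.3]{JL15a}, precisely as in the closing paragraph of the proof of Theorem \ref{thmub1}. Since none of these passage lemmas depend on the type of ordering, we conclude that $\pi$ has no nonzero Fourier coefficient attached to $\ul{p}$, and in particular $\ul{p} \notin \mathfrak{p}^m(\pi)$. The only conceptual obstacle is the bookkeeping check that every inductive/peeling step in the proof of Theorem \ref{thmub1} uses only the first-place discrepancy between $\ul{p}$ and $\ul{p}_1$; this is exactly what the case distinction on $i_0$ in that proof already records, so the corollary follows without any new calculation.
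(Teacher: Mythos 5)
Your proposal is correct and matches the paper's intent: the paper offers no separate proof of Corollary~\ref{cor1}, stating only that it "easily" follows from the proof of Theorem~\ref{thmub1}, and the reason is precisely the one you identify — the case analysis at the end of the proofs of Theorems~\ref{thmub2} and~\ref{thmub1} is organized around the unique first index $i_0$ with $p_{i_0}>q_{i_0}$, which is exactly the lexicographical comparison (and is implied by, but strictly weaker than, the dominance comparison assumed in the theorem statement). Your observation that the peeling via Fourier--Jacobi modules and the final appeal to Lemmas~\ref{lemv1}/\ref{lemv2} and the transfer results of \cite{JL15a}/\cite{GRS03} never use more than this first-discrepancy condition is the whole content of the corollary.
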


\begin{rmk}
Theorems \ref{thmub2}, \ref{thmub3} and \ref{thmub4} also have similar corollaries, if we replace the dominance ordering of partitions by the lexicographical ordering.
\end{rmk}

Applying Theorem \ref{thmub1}, we have the following analogue result, which is a generalization of Theorem \ref{thmub3}.

\begin{thm}\label{thmub4}
Let $\pi$ be an irreducible unitary automorphic representation of $\Sp_{2n}(\BA)$ which has, at one unramified local place $v$
an unramified component $\sigma_v$ of \textbf{Type IV} as in \ref{typeIV}. Then the following hold.
\begin{enumerate}
\item For any symplectic partition $\ul{p}$ of $2n$ with
$$\ul{p} > \ul{p}_1 := ([(\prod_{j=1}^t n_j^2) (\prod_{(\chi,m,\alpha) \in \textbf{e}} m^2) (\prod_{i=1}^k (2n_i+1))_{\Sp}]^t)_{\Sp},$$
$\pi$ has no non-vanishing Fourier coefficients attached to $\ul{p}$, in particular, $\ul{p} \notin \mathfrak{p}^m(\pi)$.
\item The partition $\ul{p}_1$ has the property that
$$\ul{p}_1 = \eta_{\frak{so}_{2n+1}(\BC), \frak{sp}_{2n}(\BC)}([(\prod_{j=1}^t n_j^2) (\prod_{(\chi,m,\alpha) \in \textbf{e}} m^2) (\prod_{i=1}^k (2n_i+1))(1)]).$$
\end{enumerate}
\end{thm}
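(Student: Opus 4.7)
The plan is to extend Theorem \ref{thmub1} to the Type IV setting by mimicking the deduction of Theorem \ref{thmub3} from Theorem \ref{thmub2}. Two observations drive the reduction. First, the iterative Fourier--Jacobi module calculations in the proof of Theorem \ref{thmub1} used Propositions \ref{prop1} and \ref{prop2}, together with Lemmas \ref{lemv1} and \ref{lemv2}, to track what happens when $FJ_{\psi^{\pm 1}_{r-1}}$ is applied to an induced representation; those statements are given for arbitrary quasi-characters of $F_v^*$, so the computations are insensitive to replacing $1_{\GL_1}$ by $\lambda_0$ in the strongly negative building block. Second, the Type IV data differs from Type III only in that the $\sigma_{sn}$ piece is built from $\lambda_0({\det}_{n_{i-1}+n_i+1})$ factors and ends with $\rtimes 1_{\Sp_0}$; thus, in the indexing of Theorem \ref{thmub1}, Type IV corresponds to the formal specialization where the role of $m_i$ is played by $n_i$ and the innermost $\Sp_{2m_1}$ degenerates to $\Sp_{0}$ (effectively $m_1 = 0$).

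Consequently, for Part (1), I would run the same induction on $n$ as in the proof of Theorem \ref{thmub1}. Apply $FJ_{\psi_{q_1+s-1}^{1}}$, then $FJ_{\psi_{q_2+s}^{-1}}$, and alternate, reading off the composite partition that accumulates step by step as in \eqref{sec5equ9}. Apply Lemmas \ref{lemv1} and \ref{lemv2} at each stage to ensure the maximality claims (so that larger single-step partitions give vanishing twisted Jacquet modules), and then invoke \cite[Proposition 3.2, Lemma 3.1, Proposition 3.3]{JL15a} together with \cite[Lemma 2.6]{GRS03} to convert the accumulated composite partition into a genuine symplectic partition $\ul{p}_1$. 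The induction hypothesis, applied to the unique unitary unramified constituent of the smaller induced representation obtained after peeling off a chunk of Fourier--Jacobi modules (whose strongly negative part has strictly fewer Jordan blocks), closes the argument and yields the upper bound $\ul{p}_1$ stated in Part (1).

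For Part (2), the combinatorial identity
\[
\ul{p}_1 = \eta_{\frak{so}_{2n+1}(\BC), \frak{sp}_{2n}(\BC)}\bigl([(\textstyle\prod_{j=1}^t n_j^2)(\prod_{(\chi,m,\alpha) \in \textbf{e}} m^2)(\prod_{i=1}^k (2n_i+1))(1)]\bigr)
\]
follows by the same bookkeeping with the $\Sp$-collapse and $\Sp$-expansion recipes (Theorems 6.3.8 and 6.3.9 of \cite{CM93}) that was carried out in Part (2) of Theorem \ref{thmub1}. The trailing $(1)$ appearing in the input partition corresponds to the $\rtimes 1_{\Sp_0}$ in the Type IV structure and to the additional $(1_{\GL_1},1)$ Jordan block, exactly as in the passage from Theorem \ref{thmub2} to Theorem \ref{thmub3}; splitting the parts of the input according to which interval $(2n_{i-1}+1, 2n_i+1)$ they fall into, and then comparing the effect of applying ${}^{-}$ followed by $\Sp$-collapse versus applying $\Sp$-collapse followed by $\Sp$-expansion part by part, yields the required equality case by case (odd versus even smallest part).

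The main obstacle will be the clean handling of the intermediate character twists produced by the Weil factor $\mu_{\psi^{\pm a}}$ in Proposition \ref{prop1} and Proposition \ref{prop2} when $\lambda_0$ is present. Each alternation of Fourier--Jacobi modules contributes a quadratic Hilbert-symbol character $\chi_{b/a}$ (as in Proposition \ref{prop1}(2)), and one must verify that the twists by $\lambda_0$ stay on the $\GL$ factors and never disturb the Jordan block data governing the partition bound. Since $\lambda_0$ is unramified and quadratic, the Weil-constant identity $\gamma_{\psi^a}\gamma_{\psi^{-b}} = \chi_{b/a}$ of \cite[p.~17]{Kud96} shows that any such twist can be absorbed into the unramified characters $\chi_j$ or the characters $\chi$ appearing in $\textbf{e}$ without altering the lengths of the Jordan blocks. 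Once this bookkeeping is dispatched, the argument proceeds in parallel with Theorem \ref{thmub1}, and the conclusion of Theorem \ref{thmub4} follows.
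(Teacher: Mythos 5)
Your proposal is correct and follows essentially the same approach as the paper: recognize that the Type IV data is the $\lambda_0$-twist of Type III data (with the extra $(1_{\GL_1},1)$ Jordan block matching $\rtimes 1_{\Sp_0}$), note that Lemmas \ref{lemv1}, \ref{lemv2} and Propositions \ref{prop1}, \ref{prop2} are insensitive to replacing $1_{\GL_1}$ by $\lambda_0$ since they hold for arbitrary quasi-characters, and then re-run the argument of Theorem \ref{thmub1}. The paper states this more tersely by writing $\sigma_v=\lambda_0\sigma'_v$ (twisting every $\GL$-character by $\lambda_0$) and observing $\sigma'_v$ is of Type III, which dispatches upfront the Weil-factor bookkeeping you flag as the ``main obstacle''; your re-indexing should read $m_1=0$, $m_{i+1}=n_i$ rather than ``$m_i$ played by $n_i$,'' but this is cosmetic.
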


\begin{proof}
By assumption, $\sigma_v$ corresponds to the following set of data:
\begin{align*}
& \{(\chi, m, \alpha) : (\chi, m, \alpha) \in \textbf{ e}\} \cup \{(\chi, n_i): 1 \leq i \leq t\}\\
\cup  \ & \{(\lambda_0, 2n_1+1), \ldots, (\lambda_0, 2n_k+1), (1_{\GL_1},1)\}.
\end{align*}
Rewrite $\sigma_v$ as $\lambda_0 \sigma'_v$, then it is easy to see that $\sigma'_v$ is an irreducible  unramified
unitary representation corresponds to the following set of data:
\begin{align*}
& \{(\lambda_0\chi, m, \alpha) : (\chi, m, \alpha) \in \textbf{ e}\} \cup \{(\lambda_0\chi, n_i): 1 \leq i \leq t\}\\
\cup  \ & \{(1_{\GL_1}, 2n_1+1), \ldots, (1_{\GL_1}, 2n_k+1), (1_{\GL_1},1)\}.
\end{align*}
Applying a similar argument in the proof of Theorem \ref{thmub1} to $\sigma'_v$, we can
see that for any symplectic partition $\ul{p}$ of $2n$ with
$$\ul{p} > \ul{p}_1 := ([(\prod_{j=1}^t n_j^2) (\prod_{(\chi,m,\alpha) \in \textbf{e}} m^2) (\prod_{i=1}^k (2n_i+1))_{\Sp}]^t)_{\Sp},$$
$\pi$ has no non-vanishing Fourier coefficients attached to $\ul{p}$, in particular, $\ul{p} \notin \mathfrak{p}^m(\pi)$,
and
$$\ul{p}_1 = \eta_{\frak{so}_{2n+1}(\BC), \frak{sp}_{2n}(\BC)}([(\prod_{j=1}^t n_j^2) (\prod_{(\chi,m,\alpha) \in \textbf{e}} m^2) (\prod_{i=1}^k (2n_i+1))(1)]).$$
This completes the proof of the theorem.
\end{proof}


\section{Proofs of the main results}

In this section, we first establish in Section 6.1 a refined structure about the irreducible unramified representation corresponding
to the unramified local Arthur parameter $\psi_v$ for infinitely many local places where $\psi_v$ are unramified. This result is crucial
in the proof of the main result of the paper (Theorem 1.3) given in Section 6.2. In Section 6.3, we prove a result which is in fact
related to Part (2) of Conjecture 1.2.

\subsection{On square classes}
\begin{prop}\label{propsq}
For any finitely many non-square elements $\alpha_i \notin F^*/(F^*)^2, 1 \leq i \leq t$,  there are infinitely many finite places $v$ such that $\alpha_i \in (F_v^*)^2$, for any $1 \leq i \leq t$.
\end{prop}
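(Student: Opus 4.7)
The plan is to reduce this to a standard application of the Chebotarev density theorem. First, I would form the multi-quadratic extension $K := F(\sqrt{\alpha_1}, \ldots, \sqrt{\alpha_t})$, which is a finite abelian extension of $F$ whose Galois group embeds into $(\BZ/2\BZ)^t$; since each $\alpha_i$ is assumed not to be a square in $F^*$, this extension is non-trivial. Only finitely many finite places of $F$ ramify in $K$, so they may be harmlessly discarded.

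The key observation is that for any finite place $v$ of $F$ which is unramified in $K$, one has $\alpha_i \in (F_v^*)^2$ simultaneously for all $1 \leq i \leq t$ if and only if the decomposition group of $v$ in $\Gal(K/F)$ is trivial, that is, $v$ splits completely in $K$. Indeed, $\alpha_i$ being a square in $F_v^*$ is equivalent to $F_v(\sqrt{\alpha_i}) = F_v$, equivalently, to the Frobenius at $v$ acting trivially on $\sqrt{\alpha_i}$; requiring this for every $i$ is exactly the condition of complete splitting in the compositum $K$.

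Finally, I would apply the Chebotarev density theorem to the trivial conjugacy class of $\Gal(K/F)$: the set of finite places of $F$ that split completely in $K$ has positive natural density $1/[K:F]$, and is therefore infinite. Each such place $v$ (outside the finite set of ramifying places) simultaneously satisfies $\alpha_i \in (F_v^*)^2$ for every $1 \leq i \leq t$, which is exactly the conclusion. The argument is essentially automatic once the correct compositum is identified, so I do not anticipate a real obstacle; the only care needed is the routine step of discarding the finitely many bad places.
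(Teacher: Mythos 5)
Your proof is correct, but it takes a genuinely different and more direct route than the paper's. You go straight to the compositum $K = F(\sqrt{\alpha_1},\ldots,\sqrt{\alpha_t})$ and apply the Chebotarev density theorem to the trivial class: places that split completely in $K$ are exactly the places where every $\alpha_i$ becomes a local square, and Chebotarev gives these positive density $1/[K:F]$. The paper instead fixes a large finite set $S$ of places, invokes the Dirichlet $S$-unit theorem to get $\mathfrak{u}_S/\mathfrak{u}_S^2 \cong (\BZ/2\BZ)^s$, extends a maximal multiplicatively independent subset of the $\alpha_i$'s to a basis $\{\epsilon_1,\ldots,\epsilon_s\}$, forms $K' = F(\sqrt{\epsilon_1},\ldots,\sqrt{\epsilon_{s-1}})$, and applies the Global Square Theorem (O'Meara 65:15) to produce infinitely many places of $K'$ where $\epsilon_s$ fails to be a local square; it then uses the fact that two non-square units of a non-archimedean local field lie in the same square class to deduce that all $\epsilon_i$ with $i<s$ are local squares below. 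Both approaches rest ultimately on the same density input (the Global Square Theorem is itself a corollary of Chebotarev applied to a quadratic extension), but yours bypasses the $S$-unit bookkeeping and the local square-class manipulation entirely. The trade-off is purely expository: the paper's argument is more elementary in flavor and sets up machinery (the basis $\{\epsilon_1,\ldots,\epsilon_s\}$ of $\mathfrak{u}_S/\mathfrak{u}_S^2$) that is reused directly in Proposition 6.2, whereas your argument is shorter and closer to a standard application.
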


\begin{proof}
First, it is easy to find a sufficiently large set of finitely many places $S$ which contains all the archimedean places, and $s:=\#(S) >t$,
such that $\alpha_{i}$'s are all non-square $S$-units. Let
$$\mathfrak{u}_S=\{x \in F^* : \lvert x \rvert_v = 1, \forall v \notin S\}$$
be the set of $S$-units.
Then, by the Dirichlet Unit Theorem (Page 105 of \cite{Lang94}), $\mathfrak{u}_S$ is a direct product of the roots of unity $U_F$ of $F$ with a free abelian group of rank $s-1$. Since $-1 \in U_F$, $U_F / (U_F)^2 \cong \BZ / 2 \BZ$.
Therefore, $\mathfrak{u}_S / \mathfrak{u}_S^2 \cong (\BZ / 2 \BZ)^s$,
which can be viewed as an $s$-dimensional vector space over $\BZ/2\BZ$.

Assume that $\{\epsilon_1, \ldots, \epsilon_{q}\}$ is a maximal multiplicatively independent subset of $\{\alpha_{1}, \ldots, \alpha_{t}\}$. Extend $\{\epsilon_1, \ldots, \epsilon_{q}\}$ to a set of generators of $\mathfrak{u}_S / \mathfrak{u}_S^2$: $\{\epsilon_1, \ldots, \epsilon_{s}\}$. Note that $q \leq t < s$, and any product of distinct $\epsilon_i$'s is also not a square.
Let $K=F(\sqrt{\epsilon_1}, \ldots, \sqrt{\epsilon_{s-1}})$. Then it is clear that $\epsilon_s$ is not a square in $K$.
By the Global Square Theorem (Theorem 65:15 of \cite{O71}) which is a special case of the result on Page 194 of \cite{Lang94},
there are infinitely many places $\omega \in S'$ of $K$, such that $\epsilon_s \notin (K_{\omega}^*)^2$. These places induce infinitely many places which are not in $S$.

For any $\omega \in S'$, such that $\omega \vert v$ and $v \notin S$, then $\epsilon_s \notin (K_{\omega}^*)^2$, which implies that $\epsilon_s \notin (F_{v}^*)^2$.
For any $1 \leq i \leq s-1$, since $\epsilon_i \in (K_{\omega}^*)^2$, $\epsilon_s \epsilon_i \notin (K_{\omega}^*)^2$, and hence $\epsilon_s \epsilon_i \notin (F_{v}^*)^2$. For any $1 \leq i \leq s-1$, since both $\epsilon_s$ and $\epsilon_s \epsilon_i$
are non-square units in $\CO_v$, it is easy to see that they are in the same square class, which implies that $\epsilon_i \in F_v^2$.
Therefore, there are infinitely many finite places $v$ which are not in $S$, such that $\epsilon_i \in F_v^2$, for any $1 \leq i \leq s-1$.
Since $\alpha_{i}$'s are generated by $\{\epsilon_1, \ldots, \epsilon_{q}\}$, they are all squares in $F_v$ for these $v$.

This completes the proof of the proposition.
\end{proof}

\begin{rmk}\label{rmk2}
Applying Dirichlet's theorem on primes in arithmetic progressions, the law of quadratic reciprocity, and the Chinese remainder theorem, it is easy to see that for any $M > 0$, there are infinitely many primes $p$ such that the numbers $1, 2, \ldots, M$ are all residues modulo $p$.
Proposition \ref{propsq} generalizes this result to arbitrary number fields.
\end{rmk}

The following proposition gives more structure on the global non-square classes occurring in any global Arthur parameter,
which is of interest for future applications.

\begin{prop}\label{propap}
Given any $\psi = \boxplus_{i=1}^r (\tau_i, b_i) \in \wt{\Psi}_2(\Sp_{2n})$. Assume
that $\{\tau_{i_1}, \ldots, \tau_{i_q}\}$ is a multi-set of all the $\tau$'s with non-trivial central characters, and $\omega_{\tau_{i_j}}=\chi_{\alpha_{i_j}}$,
where $\alpha_{i_j} \in F^* / (F^*)^2$.
Let $S$ be any set of finitely many places which contains all the archimedean places, $s := \#(S)$,
such that $\alpha_{i_j}$'s are in $\mathfrak{u}_S$-the set of all $S$-units.
Also assume that $\{\epsilon_1, \ldots, \epsilon_{s}\}$ is any set of generators of $\mathfrak{u}_S / \mathfrak{u}_S^2$,
and $\alpha_{i_j} = \epsilon_1^{v_{1,j}}
\cdots \epsilon_s^{v_{s,j}} \delta^2$,
where $v_{l,j}=0$ or $1$, $\delta \in \mathfrak{u}_S$. Then
$\sum_{j=1}^q v_{l,j}$ must be even, for any $1 \leq l \leq s$.
\end{prop}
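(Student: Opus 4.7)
The plan is to reduce the statement to the single assertion that $\prod_{j=1}^{q}\alpha_{i_j}\in(F^*)^2$. Indeed, as used in the proof of Proposition \ref{propsq}, the Dirichlet unit theorem gives $\mathfrak{u}_S/\mathfrak{u}_S^2\cong(\BZ/2\BZ)^s$, an $s$-dimensional vector space over $\BZ/2\BZ$, so any set of $s$ generators $\{\epsilon_1,\ldots,\epsilon_s\}$ is automatically a basis. Multiplying the decompositions $\alpha_{i_j}=\epsilon_1^{v_{1,j}}\cdots\epsilon_s^{v_{s,j}}\delta_j^{\,2}$ yields
\[
\prod_{j=1}^{q}\alpha_{i_j}\;=\;\epsilon_1^{\sum_j v_{1,j}}\cdots\epsilon_s^{\sum_j v_{s,j}}\cdot\Bigl(\prod_j\delta_j\Bigr)^{\!2}.
\]
If this product lies in $(F^*)^2$, then the $S$-unit $\epsilon_1^{\sum_j v_{1,j}}\cdots\epsilon_s^{\sum_j v_{s,j}}$ is a square in $F^*$, hence a square in $\mathfrak{u}_S$ (because any $F^*$-square root of an $S$-unit must itself be an $S$-unit, by comparing valuations outside $S$), hence trivial in $\mathfrak{u}_S/\mathfrak{u}_S^2$. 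The basis property then forces each exponent $\sum_j v_{l,j}$ to be even, which is exactly the conclusion.

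It thus suffices to show $\prod_{j=1}^{q}\alpha_{i_j}\in(F^*)^2$, and for this I will invoke the central character condition built into the definition of a global Arthur parameter, namely $\prod_{i=1}^{r}\omega_{\tau_i}^{\,b_i}=1$. As recalled in Section 2.2, the multiplicity $b_i$ is even when $\tau_i$ is of symplectic type ($i\in I$), and is odd when $\tau_i$ is of orthogonal type ($i\in J\cup S$). Since every $\tau_i$ is self-dual one has $\omega_{\tau_i}^{\,2}=1$, so $\omega_{\tau_i}^{\,b_i}=1$ for all $i\in I$ while $\omega_{\tau_i}^{\,b_i}=\omega_{\tau_i}$ for all $i\in J\cup S$. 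The central character condition therefore collapses to $\prod_{i\in J\cup S}\omega_{\tau_i}=1$; those $\tau_i$ with trivial global central character drop out, leaving $\prod_{j=1}^{q}\chi_{\alpha_{i_j}}=1$. Since $\alpha\mapsto\chi_\alpha$ is a homomorphism from $F^*/(F^*)^2$ into the group of quadratic id\`ele class characters, we have $\prod_j\chi_{\alpha_{i_j}}=\chi_{\prod_j\alpha_{i_j}}$, and the triviality of this quadratic character is equivalent to $\prod_j\alpha_{i_j}\in(F^*)^2$, as desired.

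The argument is essentially a bookkeeping exercise and there is no single step that presents a serious obstacle. The two points deserving a little care are the basis property (any $s$-element generating set of the $\BZ/2\BZ$-vector space $\mathfrak{u}_S/\mathfrak{u}_S^2$ is automatically a basis, and its image characterises squareness in $\mathfrak{u}_S$) and the parity discussion for $b_i$, which is fixed by the symplectic/orthogonal type of $\tau_i$ in the decomposition recalled in Section 2.2.
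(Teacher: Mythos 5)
Your proof is correct, and it takes a genuinely different route from the paper's. The paper argues by contradiction and descends to a single local place: assuming $\sum_j v_{l,j}$ is odd for some $l$, it invokes the construction from Proposition \ref{propsq} to produce a finite place $v\notin S$ at which $\epsilon_l$ is the unique non-square among the $\epsilon_k$'s, and then shows that the \emph{local} product $\prod_i\omega_{\tau_{i,v}}^{b_i}$ is the non-trivial unramified quadratic character $\lambda_0$ at $v$, contradicting the central-character constraint. You instead stay entirely global: you extract $\prod_{j=1}^q\chi_{\alpha_{i_j}}=1$ directly from $\prod_i\omega_{\tau_i}^{b_i}=1$ using the parity of $b_i$, conclude $\prod_j\alpha_{i_j}\in(F^*)^2$, and then let the $\BZ/2\BZ$-vector-space structure of $\mathfrak{u}_S/\mathfrak{u}_S^2$ do the rest. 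Your version is self-contained and avoids re-running the place-selection argument, which is a genuine simplification; the paper's version has the minor advantage of being uniform in style with Proposition \ref{propsq}. Both rely on the same two ingredients: the central character condition in $\wt{\Psi}_2(\Sp_{2n})$, and the fact that $b_i$ is even exactly for the symplectic-type $\tau_i$ (so those factors drop out).

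One small point you glide over, which is also implicit in the paper's proof: for the collapse $\prod_{i\in J\cup S,\;\omega_{\tau_i}\neq 1}\omega_{\tau_i}=\prod_{j=1}^q\chi_{\alpha_{i_j}}$ to be an identity, you need the multi-set $\{\tau_{i_1},\ldots,\tau_{i_q}\}$ of \emph{all} $\tau$'s with non-trivial central character to be contained in the orthogonal-type indices $J\cup S$. This holds because a self-dual cuspidal representation of symplectic type has Langlands parameter factoring through $\Sp_{2a}(\BC)$, which has trivial determinant, so its central character is automatically trivial. It would be worth stating this explicitly; without it, a hypothetical symplectic-type $\tau_i$ with non-trivial $\omega_{\tau_i}$ would drop out of the central-character condition (since $b_i$ is even) but still contribute a non-trivial $\chi_{\alpha_{i_j}}$ to the right-hand side, and the equality would fail.
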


\begin{proof}
Assume on the contrary that there is an $1 \leq l \leq s$, such that $\sum_{j=1}^q v_{l,j}$ is odd.
By a similar argument as in the proof of Proposition \ref{propsq},
there are infinitely many places $v$ which are not in $S$,
such that $\epsilon_l \notin F_v^2$,
$\epsilon_k \in F_v^2$, for any $1 \leq k \neq l \leq s$.
Then, it is easy to see that
$\prod_{i=1}^r \omega_{\tau_{i,v}}^{b_i} \neq 1$ for these $v$, as central characters of $\GL_{2n+1}(F_v)$, applying the multiplicativity of local Hilbert symbol. Therefore,
$\prod_{i=1}^r \omega_{\tau_{i}}^{b_i} \neq 1$, as central characters of $\GL_{2n+1}(\BA)$, which contradicts the definition of Arthur parameters.
Therefore, $\sum_{j=1}^q v_{l,j}$ must be even, for any $1 \leq l \leq s$.
\end{proof}

\subsection{The completion of the proof of Theorem \ref{main}}

Given any $\psi = \boxplus_{i=1}^r (\tau_i, b_i) \in \wt{\Psi}_2(\Sp_{2n})$. Assume that $\{\tau_{i_1}, \ldots, \tau_{i_q}\}$ is a multi-set of all the $\tau$'s with non-trivial central characters.
Since all $\tau_{i_j}$'s are self-dual, the central characters $\omega_{\tau_{i_j}}$'s are all quadratic characters, which are parametrized by global non-square elements. Assume that $\omega_{\tau_{i_j}}=\chi_{\alpha_{i_j}}$,
where $\alpha_{i_j} \in F^* / (F^*)^2$,
and $\chi_{\alpha_{i_j}}$ is the quadratic character given by the global Hilbert symbol $(\cdot, \alpha_{i_j})$.
Note that $\{\alpha_{i_1}, \ldots, \alpha_{i_q}\}$ is a multi-set.

By Proposition \ref{propsq}, there are infinitely many finite places $v$, such that $\alpha_{i_j}$'s are all squares in
$F_v$, that is, $\omega_{\tau_{i_j,v}}$'s are all trivial.
Therefore, for the given $\psi$,
there are infinitely many finite places $v$ such that all $\tau_{i,v}$'s have trivial central characters. From
the discussion in Section 2.2, for any $\pi \in \wt{\Pi}_{\psi}(\epsilon_{\psi})$, there is a finite local place $v$ with such a property
that $\pi_v$ is an irreducible unramified unitary representation of \textbf{Type III} as in \ref{typeIII}.

Indeed, the difference between irreducible unramified unitary representations of \textbf{Type III} as in \ref{typeIII} and general irreducible unramified unitary representations is the strongly negative part $\sigma_{sn}$. In general, via classification, $\sigma_{sn}$ involves two kinds of Jordan blocks $(1_{\GL_1}, 2m_i+1)$ and $(\lambda_0, 2n_i+1)$. For a general irreducible unramified unitary representation, in order to be of \textbf{Type III}, $\sigma_{sn}$ should only involve Jordan blocks $(1_{\GL_1}, 2m_i+1)$. From the discussion in Section 2.2, for a finite place $v$ such that all $\pi_{i,v}$'s have trivial central characters, all Jordan blocks involved in $\pi_v$ either have even multiplicities which will not occur in the strongly negative part, or are only $(1_{\GL_1}, 2m_i+1)$'s with odd multiplicities which will occur in the strongly negative part, hence $\pi_v$ is of \textbf{Type III}.

By Theorem \ref{thmub1}, for any symplectic partition $\ul{p}$ of $2n$ with $\ul{p} > \eta_{{\frak{g}^\vee,\frak{g}}}(\underline{p}(\psi))$,
$\pi$ has no non-vanishing Fourier coefficients attached to $\ul{p}$,
in particular, $\ul{p} \notin \mathfrak{p}^m(\pi)$.
This completes the proof of Theorem \ref{main}.

\subsection{About Part (2) of Conjecture \ref{cubmfc}}

Part (2) of Conjecture \ref{cubmfc} can be rephrased as follows: given
any $\psi\in\wt{\Psi}_2(\Sp_{2n})$ and any $\pi\in\wt{\Pi}_{\psi}(\epsilon_\psi)$,
 for any symplectic partition $\ul{p}$ which is not related to $\eta_{{\frak{g}^\vee,\frak{g}}}(\ul{p}(\psi))$ under the usual ordering of partitions, that is, the dominance ordering, $\pi$ has no non-vanishing Fourier coefficients attached to $\ul{p}$, in particular, $\ul{p}\notin\frak{p}^m(\pi)$.

Assume that $\ul{p}$ is a symplectic partition which is not related to $\eta_{{\frak{g}^\vee,\frak{g}}}(\ul{p}(\psi))$ under the dominance ordering.
If we consider the lexicographical ordering of partitions, which is a total ordering, then in general there are two cases:
\begin{enumerate}
\item[(2-1)] $\ul{p}$ is bigger than $\eta_{{\frak{g}^\vee,\frak{g}}}(\ul{p}(\psi))$ under the lexicographical ordering;
\item[(2-2)] $\ul{p}$ is smaller than $\eta_{{\frak{g}^\vee,\frak{g}}}(\ul{p}(\psi))$ under the lexicographical ordering.
\end{enumerate}

Replacing Theorem \ref{thmub1} by Corollary \ref{cor1} in the proof of Theorem \ref{main}, we can easily get the following result towards confirming Part (2) of Conjecture \ref{cubmfc}.

\begin{prop}\label{prop3}
Given any $\psi\in\wt{\Psi}_2(\Sp_{2n})$ and any $\pi\in\wt{\Pi}_{\psi}(\epsilon_\psi)$.
Assume that $\ul{p}$ is a symplectic partition which is not related to $\eta_{{\frak{g}^\vee,\frak{g}}}(\ul{p}(\psi))$ under the dominance ordering.
If $\ul{p}$ is bigger than $\eta_{{\frak{g}^\vee,\frak{g}}}(\ul{p}(\psi))$ under the lexicographical ordering, then $\pi$ has no non-vanishing Fourier coefficients attached to $\ul{p}$, in particular, $\ul{p}\notin\frak{p}^m(\pi)$.
\end{prop}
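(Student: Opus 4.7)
The plan is to follow the same overall strategy as the proof of Theorem \ref{main} in Section 6.2, but replace the input from Theorem \ref{thmub1} by the lexicographic statement in Corollary \ref{cor1}. First, given the global Arthur parameter $\psi = \boxplus_{i=1}^r(\tau_i,b_i)$, I would isolate the sub-multiset $\{\tau_{i_1},\ldots,\tau_{i_q}\}$ of those cuspidal data with nontrivial central characters, write each $\omega_{\tau_{i_j}} = \chi_{\alpha_{i_j}}$ for non-square classes $\alpha_{i_j}\in F^*/(F^*)^2$, and then invoke Proposition \ref{propsq} to produce infinitely many finite places $v$ at which every $\alpha_{i_j}$ is a square in $F_v^*$. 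At each such $v$ the parameter $\psi$ is unramified, all $\tau_{i,v}$ have trivial central character, and the analysis in Section 2.2 shows that the corresponding local component $\pi_v$ of any $\pi \in \wt{\Pi}_\psi(\epsilon_\psi)$ is an irreducible unramified unitary representation of \textbf{Type III} as in \eqref{typeIII}.

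Next, I would compute, from the data describing $\pi_v$, the partition
\[
\ul{p}_1 = \bigl(\bigl[(\textstyle\prod_{j=1}^t n_j^2)(\prod_{(\chi,m,\alpha)\in\mathbf{e}}m^2)(\prod_{i=2}^l(2m_i+1))_{\Sp}(2m_1)\bigr]^t\bigr)_{\Sp}
\]
as in Theorem \ref{thmub1}. Part (2) of Theorem \ref{thmub1} identifies this $\ul{p}_1$ with the Barbasch--Vogan dual $\eta_{\frak{g}^\vee,\frak{g}}(\ul{p}(\psi))$ attached to $\psi$, so the vanishing criterion can be stated directly in terms of this partition.

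Then I would invoke Corollary \ref{cor1}, which states that for any symplectic partition $\ul{p}$ of $2n$ strictly larger than $\ul{p}_1$ under the lexicographical ordering, $\pi$ has no non-zero Fourier coefficient attached to $\ul{p}$, and in particular $\ul{p}\notin \frak{p}^m(\pi)$. By hypothesis, the partition $\ul{p}$ in Proposition \ref{prop3} is not related to $\eta_{\frak{g}^\vee,\frak{g}}(\ul{p}(\psi))=\ul{p}_1$ in the dominance order but is strictly larger in the lexicographical order, so the hypotheses of Corollary \ref{cor1} are met and the conclusion is immediate.

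The content of the argument is therefore concentrated entirely in the inputs already established; no further technical obstacle arises here. The only subtle point to verify carefully is that applying the lexicographic version of the bound (Corollary \ref{cor1}) is legitimate, namely that the chain of reductions in the proof of Theorem \ref{thmub1} (successive Fourier--Jacobi modules, composites of partitions, $\Sp$-collapses via \cite[Lemma 3.1, Proposition 3.3]{JL15a}) remains valid when the final comparison between $\ul{p}$ and $\ul{p}_1$ is made with respect to the lexicographic rather than the dominance order. This is exactly what is asserted in Corollary \ref{cor1}, so the proof of Proposition \ref{prop3} reduces to a clean application of Propositions \ref{propsq}, \ref{propap} and Corollary \ref{cor1}, in parallel with the proof of Theorem \ref{main}.
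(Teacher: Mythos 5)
Your proposal matches the paper's own argument exactly: the paper proves Proposition \ref{prop3} by reproducing the proof of Theorem \ref{main} verbatim — reduce to infinitely many places via Proposition \ref{propsq}, identify $\pi_v$ as Type III, identify $\ul{p}_1$ with $\eta_{\frak{g}^\vee,\frak{g}}(\ul{p}(\psi))$ — and then substituting Corollary \ref{cor1} (lexicographic comparison) for Theorem \ref{thmub1} (dominance comparison) in the final step. The only superfluous detail is your mention of Proposition \ref{propap}, which is not actually invoked in this argument.
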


\begin{rmk}\label{rmk8}
For certain global Arthur parameters of symplectic groups, if $\ul{p}$ is a symplectic partition which is not related to $\eta_{{\frak{g}^\vee,\frak{g}}}(\ul{p}(\psi))$ under the dominance ordering, then $\ul{p}$ is automatically bigger than $\eta_{{\frak{g}^\vee,\frak{g}}}(\ul{p}(\psi))$ under the lexicographical ordering. For example, the global Arthur parameters for $\Sp_{4mn}$ considered in \cite{Liu13}: $\psi=(\tau, 2m) \boxplus (1_{\GL_1(\BA)}, 1)$, where $\tau$ is an irreducible cuspidal automorphic representation of
$\GL_{2n}(\BA)$, with the properties that
$L(s, \tau, \wedge^2)$ has a simple
pole at $s=1$, and
$L(\frac{1}{2}, \tau) \neq 0$. By definition, $\ul{p}(\psi)=[(2m)^{2n}1]$. By Definition \ref{def2},
$\eta_{{\frak{g}^\vee,\frak{g}}}(\ul{p}(\psi))=[(2n)^{2m}]$. Then it is easy to see that if a symplectic partition $\ul{p}$ is not related to $[(2n)^{2m}]$ under the dominance ordering, then it is automatically bigger than $[(2n)^{2m}]$ under the lexicographical ordering.
\end{rmk}



\begin{thebibliography}{dihuajiang}
\bibitem[Ac03]{Ac03}
P. Achar,
{\it An order-reversing duality map for conjugacy classes in Lusztig's canonical quotient.}
Transform. Groups \textbf{8} (2003), no. 2, 107--145.

\bibitem[Ar13]{Ar13}
J. Arthur,
{\it The endoscopic classification of representations: Orthogonal and Symplectic groups.}
Colloquium Publication Vol. \textbf{61}, 2013,
American Mathematical Society.

\bibitem[Bar10]{Bar10}
D. Barbasch,
{\it The unitary spherical spectrum for split classical groups.}
J. Inst. Math. Jussieu \textbf{9} (2010), no. 2, 265--356.

\bibitem[BV85]{BV85}
D. Barbasch and D. Vogan,
{\it Unipotent representations of complex semisimple groups.}
Ann. of Math. (2) \textbf{121} (1985), no. 1, 41--110.




\bibitem[CM93]{CM93}
D. Collingwood and W. McGovern,
{\it Nilpotent orbits in semisimple Lie algebras.}
Van Nostrand Reinhold Mathematics Series. Van Nostrand Reinhold Co., New York, 1993. xiv+186 pp.

\bibitem[GGP12]{GGP12}
W. Gan, B. Gross and D. Prasad,
{\it Symplectic local root numbers, central critical $L$-values, and restriction problems in the representation theory of classical groups}.
Ast\'erisque vol. \textbf{346}, 1--109, (2012).

%

\bibitem[G12]{G12}
D. Ginzburg,
{\it Constructing automorphic representations in split classical groups}. Electron. Res. Announc. Math. Sci. \textbf{19} (2012), 18--32.

\bibitem[GJR04]{GJR04}
D. Ginzburg, D. Jiang and S. Rallis,
{\it On the nonvanishing of the central value of the Rankin-Selberg L-functions.} J. Amer. Math. Soc. \textbf{17} (2004), no. 3, 679--722 (electronic).

\bibitem[GJRS11]{GJRS11}
D. Ginzburg, D. Jiang, S. Rallis and D. Soudry,
{\it L-functions for symplectic groups using Fourier-Jacobi models.}
Arithmetic geometry and automorphic forms, 183--207, Adv. Lect. Math. (ALM), \textbf{19}, Int. Press, Somerville, MA, 2011.


\bibitem[GRS03]{GRS03}
D. Ginzburg, S. Rallis and D. Soudry,
{\it On Fourier coefficients of automorphic forms of symplectic groups.}
Manuscripta Math. \textbf{111} (2003), no. 1, 1--16.

\bibitem[GRS05]{GRS05}
D. Ginzburg, S. Rallis and D. Soudry,
{\it Contruction of CAP representations for Symplectic groups using the descent method.}
Automorphic representations, L-functions and applications: progress and prospects, 193--224, Ohio State Univ. Math. Res. Inst. Publ., \textbf{11}, de Gruyter, Berlin, 2005.

\bibitem[GRS11]{GRS11}
D. Ginzburg, S. Rallis and D. Soudry,
{\it The descent map from automorphic representations of {${\rm GL}(n)$} to classical groups.} World Scientific, Singapore, 2011. v+339 pp.


\bibitem[J14]{J14}
D. Jiang,
{\it Automorphic Integral transforms for classical groups I: endoscopy correspondences}.
Automorphic Forms: L-functions and related geometry: assessing the legacy of I.I. Piatetski-Shapiro, 179--242,
Comtemp. Math. \textbf{614}, 2014, AMS.

\bibitem[JL13]{JL13}
D. Jiang and B. Liu,
{\it On Fourier coefficients of automorphic forms of ${\rm GL}(n)$}.
Int. Math. Res. Not. 2013 (17): 4029--4071.

\bibitem[JL15a]{JL15a}
D. Jiang and B. Liu,
{\it On special unipotent orbits and Fourier coefficients for automorphic forms on symplectic groups}.
J. Number Theory \textbf{146} (2015), 343--389.

\bibitem[JL15b]{JL15b}
D. Jiang and B. Liu,
{\it Fourier coefficients for automorphic forms on quasisplit classical groups}.
Accepted by a special volume in honor of J. Cogdell, Comtemp. Math., AMS, 2015. 

\bibitem[JLZ13]{JLZ13}
D. Jiang, B. Liu and L. Zhang,
{\it Poles of certain residual Eisenstein series of classical groups}.
Pacific J. of Math. Vol. \textbf{264} (2013), No. 1, 83--123.


\bibitem[JZ14]{JZ14}
D. Jiang and L. Zhang,
{\it A product of tensor product $L$-functions for classical groups of hermitian type.}
Geom. Funct. Anal. \textbf{24} (2014), no. 2, 552--609.


\bibitem[Kud96]{Kud96}
S. Kudla,
{\it Note on the local theta correspondence}. Preprint, 1996.




\bibitem[Lang94]{Lang94}
S. Lang,
{\it Algebraic number theory.}
Second edition. Graduate Texts in Mathematics, \textbf{110}. Springer-Verlag, New York, 1994. xiv+357 pp. ISBN: 0-387-94225-4


\bibitem[Liu13]{Liu13}
B. Liu,
{\it Fourier coefficients of automorphic forms and Arthur classification}.
Thesis (Ph.D.)--University of Minnesota. 2013. 127 pp. ISBN: 978-1303-19255-5.









\bibitem[M06]{M06}
G. Muic,
{\it On the non-unitary unramified dual for classical p-adic groups.}
Trans. Amer. Math. Soc. \textbf{358} (2006), no. 10, 4653--4687 (electronic).

\bibitem[M07]{M07}
G. Muic,
{\it On certain classes of unitary representations for split classical groups.}
 Canad. J. Math. \textbf{59} (2007), no. 1, 148--185.

\bibitem[MT11]{MT11}
G. Muic and M. Tadic,
{\it Unramified unitary duals for split classical p-adic groups; the topology and isolated representations.}
On certain L-functions, 375--438, Clay Math. Proc., \textbf{13}, Amer. Math. Soc., Providence, RI, 2011.

\bibitem[N11]{N11}
M. Nevins,
{\it On nilpotent orbits of {${\rm SL}(n)$} and {${\rm Sp}(2n)$} over a local non-Archimedean field.} Algebr. Represent. Theory \textbf{14} (2011), no. 1, 161--190.


\bibitem[O71]{O71}
O. T. O'Meara,
{\it Introduction to quadratic forms.} Second printing, corrected. Die Grundlehren der mathematischen Wissenschaften, Band \textbf{117}. Springer-Verlag, New York-Heidelberg, 1971. xi+342 pp.

\bibitem[PS79]{PS79}
I. Piatetski-Shapiro,
{\it Multiplicity one theorems}. Automorphic forms, rep-
resentations and L-functions, Part 1, pp. 209--212, Proc. Sympos. Pure
Math., XXXIII, Amer. Math. Soc., Providence, R.I., 1979.


\bibitem[S74]{S74}
J. Shalika,
{\it The multiplicity one theorem for $\GL_n$}. Ann. of Math. (2) \textbf{100}
(1974), 171--193.


\bibitem[W01]{W01}
J.-L. Waldspurger,
{\it Int\'egrales orbitales nilpotentes et endoscopie pour les groupes classiques non ramifi\'es}. Ast\'erisque \textbf{269}, 2001.
\end{thebibliography}
\end{document}